\newtheorem{theorem}{Theorem}[section]
\newtheorem{corollary}[theorem]{Corollary}
\newtheorem{Lem}[theorem]{Lemma}
\newtheorem{Prop}[theorem]{Proposition}
\newtheorem{conjecture}[theorem]{Conjecture}
\theoremstyle{definition}
\newtheorem{Def}[theorem]{Definition}
\newtheorem{remark}[theorem]{Remark}
\newtheorem{example}[theorem]{Example}
\theoremstyle{remark}
\renewcommand{\theclaim}{\textup{\theclaim}}
\numberwithin{equation}{section}
\def\openone
\newbox\ipbox
\newcommand{\diracb}[1]{\left\langle #1\mathrel{\mathchoice

{\setbox\ipbox=\hbox{$\displaystyle \left\langle\mathstrut
#1\right.$}

\vrule height\ht\ipbox width0.25pt depth\dp\ipbox}

{\setbox\ipbox=\hbox{$\textstyle \left\langle\mathstrut
#1\right.$}

\vrule height\ht\ipbox width0.25pt depth\dp\ipbox}

{\setbox\ipbox=\hbox{$\scriptstyle \left\langle\mathstrut
#1\right.$}

\vrule height\ht\ipbox width0.25pt depth\dp\ipbox}

{\setbox\ipbox=\hbox{$\scriptscriptstyle \left\langle\mathstrut
#1\right.$}

\vrule height\ht\ipbox width0.25pt depth\dp\ipbox}

}\right. }
\newcommand{\dirack}[1]{\left. \mathrel{\mathchoice

{\setbox\ipbox=\hbox{$\displaystyle \left.\mathstrut
#1\right\rangle$}

\vrule height\ht\ipbox width0.25pt depth\dp\ipbox}

{\setbox\ipbox=\hbox{$\textstyle \left.\mathstrut
#1\right\rangle$}

\vrule height\ht\ipbox width0.25pt depth\dp\ipbox}

{\setbox\ipbox=\hbox{$\scriptstyle \left.\mathstrut
#1\right\rangle$}

\vrule height\ht\ipbox width0.25pt depth\dp\ipbox}

{\setbox\ipbox=\hbox{$\scriptscriptstyle \left.\mathstrut
#1\right\rangle$}

\vrule height\ht\ipbox width0.25pt depth\dp\ipbox}

} #1\right\rangle}
\newcommand{\B}{\mathcal{B}}
\newcommand{\beq}{\begin{equation}}
\newcommand{\eeq}{\end{equation}}
\def\blfootnote{\xdef\@thefnmark{}\@footnotetext}
\def\R{\mathbb{R}}
\def\N{\mathbb{N}}
\def\A{\mathcal{A}}
\def\E{\mathcal E}
\def\Q{\mathbb{Q}}
\def\-{^{-1}}
\def\B{\mathcal{B}}
\def\D{\mathcal{D}}
\def\S{\mathcal{S}}
\def\Z{\mathbb{Z}}
\def\A{\mathcal{A}}
\title{Product-form Hadamard triples and its spectral self-similar measures}
\author{Lixiang An}
\address{[Lixiang An]School of Mathematics and Statistics,
$\&$ Hubei Key Laboratory of Mathematical Sciences,
Central China Normal University,
Wuhan 430079,
P.R. China.}
 \email{anlixianghai@163.com}
\author{Chun-Kit Lai}
\address{[Chun-Kit Lai]Department of Mathematics, San Francisco State University,
1600 Holloway Avenue, San Francisco, CA 94132.}
 \email{cklai@sfsu.edu}
\subjclass[2010]{42B10,28A80,42C30}
\keywords{Product-form, Hadamard triples, Self-similar measures, Spectral measures}
\thanks{The research of Lixiang An is supported by NSFC grant 12171181 and 11971194.}
\begin{document}

\begin{abstract}
    In a previous work by {\L}aba and Wang, it was proved that whenever there is a Hadamard triple $(N,\D,{\mathcal L})$, then the associated one-dimensional self-similar measure $\mu_{N,\D}$ generated by maps $N^{-1}(x+d)$ with $d\in\D$,  is a spectral measure. In this paper, we introduce product-form digit sets for finitely many Hadamard triples $(N, \A_k, {\mathcal L}_k)$ by putting each triple into different scales of $N$. Our main result is to prove that the associated self-similar measure $\mu_{N,\D}$ is a spectral measure. This result allows us to show that product-form self-similar tiles are spectral sets as long as the tiles in the group $\Z_N$ obey the Coven-Meyerowitz $(T1)$, $(T2)$ tiling condition. Moreover, we show that all self-similar tiles with $N = p^{\alpha}q$ are spectral sets, answering a question by Fu, He and Lau in 2015. Finally, our results allow us to offer new singular spectral measures not generated by a single Hadamard triple.  Such new examples allow us to classify all spectral self-similar measures generated by four equi-contraction maps, which will appear in a forthcoming paper. 
\end{abstract}

\maketitle
\section{introduction}

 A  Borel probability measure $\mu$ on ${\mathbb R}^d$ is called a {\it spectral measure} if we can find a countable set $\Lambda\subset{\mathbb R}^d$ such that the set of exponential functions $E(\Lambda): = \{e^{2\pi i \lambda \cdot x}:\lambda\in\Lambda\}$ forms an orthonormal basis for $L^2(\mu)$.  If such $\Lambda$ exists, then $\Lambda$ is called a {\it spectrum} for $\mu$. If a spectral measure $\mu$ is the Lebesgue measure on a measurable set $\Omega$, then we say that $\Omega$ is a {\it spectral set}.
 
 \medskip
 
 It is well-known from classical Fourier analysis that the unit cubes $[0,1]^d$ is a spectral set with spectrum ${\mathbb Z}^d$. Due to the rigid orthogonality criterion for the exponentials, spectral measures are in general rare in nature, but when spectral measures or spectral sets exist, there must be some strong geometric criterion attached to the support of the measure. This intriguing study was first  initiated by Fuglede  in his seminal paper \cite{F1974}. He conjectured that a spectral sets can be characterized geometrically by translational tiling.  The conjecture remained open until 2004 and it was disproved in dimension 3 or higher in its full generality \cite{T2004, KM1, KM2}. Recently, Lev and Matolcsi  proved that Fuglede's assertion is true if $\Omega$ is a convex domain \cite{LM2022}. In the course of their proof, they actually proved that all spectral sets  must admit a ``weak tiling" which is a generalization of translational tiling in its measure theoretic form. In some sense, this provided a geometric characterization to spectral sets. 
 
 \medskip

A complete solution to classifying all spectral measures  is far more complicated. In \cite{HLL2013}, it was proved that a spectral measure must be purely absolutely continuous, purely singular or purely atomic. For absolutely continuous measures, it is necessary that the measure must be a Lebesgue measure supported on a measurable set \cite{DL2014}. Therefore, studying the spectrality of absolute continuous measures is reduced to the original Fuglede's conjecture.  Our focus of this paper will therefore be on the singular spectral measures.

\medskip

 First, we start with the purely atomic spectral measures.  There has been intensive study of the related  Fuglede's conjecture formulated in finite groups and it has been shown to hold for many families thereof  \cite{FFLS2019, MK2017, S2019}. In this paper, for a finite set $\D = \{d_1,...,d_m\}$, let us define the equally-weighted Dirac measure as
$$
\delta_{\D}  = \frac1{m}\sum_{i=1}^m\delta_{d_i}.
$$
The following condition is equivalent to the fact that 
$\D$ is a spectral set in the cyclic group $\Z_N: = \Z/N\Z$.  We refer the reader \cite{MK2017, S2019, L2001,LL2022,M2022} and the reference therein for some recent progress in the Fuglede's conjecture specifically to cyclic groups. 

\begin{Def}\label{def_hada}
We say that $(N,\D,{\mathcal L})$ forms an (integral) Hadamard triple if $\D$ and ${\mathcal L} \subset \Z$ and   $\delta_{\frac{\D}{N}}$ is a spectral measure with spectrum ${\mathcal L}$. Equivalently, 
$$
\frac{1}{\sqrt{m}} \left(e^{2\pi i \frac{d\ell}{N}}\right)_{d\in\D,\ell\in{\mathcal L}}
$$
is a unitary matrix. We will also adopt the notion of {\L}aba and Wang to define $(N,\D)$ is a {\it compatible pair} if there exist ${\mathcal L}$ such that $(N,\D,{\mathcal L})$ is a Hadamard triple. 
\end{Def}


 Studying the spectrality of purely atomic measures is not enough to understand all spectral measures. The first singular spectral measures without any atoms were discovered by 
 Jorgensen and Pedersen \cite{JP1998}. They discovered that the middle fourth Cantor measure is a spectral measure. The main principle of generating spectral self-similar measures were immediately formalized and they will generate many spectral self-similar measures.  Recall that  given a finite collection of maps 
$$
\phi_i(x)=\frac1N(x+d_i), 
$$
${\mathcal D} =\{d_i: i = 1,...,m\}$ for $i=1,...,m$, the (equal-weight) {\it self-similar measure} $\mu=\mu_{N,\D}$ is the unique probability measure such that 
$$
\mu (E)=\sum_{i = 1}^{m} \frac1m \mu (\phi_i^{-1}(E)), \ \forall E \ \mbox{Borel}.
$$
The {\it attractor} is the unique  non-empty compact set $K = K(N,\D)$ satisfying the identity $K = \bigcup_{i=1}^m \phi_i(K)$ (see \cite{H1981}).
Because the contraction ratio of all the maps are the same, the measure $\mu$ can be written as an infinite convolution of discrete measures
$$
\mu_{N,\D}= \delta_{\frac{\D}{N}}\ast\delta_{\frac{\D}{N^2}}\ast....
$$

\begin{example}
{\rm For the middle-fourth Cantor measure, we let  $\D = \{0,2\}$ and ${\mathcal L} = \{0,1\}$. Then $\D$ is a spectral set in $\Z_4$ with $(4,\D,{\mathcal L})$ is a Hadamard triple. Consider the first $k$ convolution of the discrete measures defined in $\mu_{4,\D}$, }
$$
\delta_{\frac{\D}{4}}\ast\delta_{\frac{\D}{4^2}}\ast...\ast\delta_{\frac{\D}{4^k}} = \delta_{\frac{\D_k}{4^k}}
$$
{\rm where $\D_k = \D+4\D+...+4^{k-1}\D$. With a direct check, $(4^k, \D_k, {\mathcal L}_k )$ is also a Hadamard triple. To pass to the limit and show the spectrality of $\mu_{4,\D}$, we need some extra technical analysis. A simple short argument can be found in \cite{DLW2016}.} 
\end{example}

\medskip

{\L}aba and Wang \cite{LW2002} proved that if $(N,\D,{\mathcal L})$ forms a Hadamard triple, then the associated self-similar measure $\mu_{N,\D}$ is a spectral measure.  The result  is also true for self-affine measures in higher dimension in which the integer $N$ is replaced by expansive integer matrices \cite{DHL2019}. Apart from self-similar measures, it is also possible to generate new spectral measures by random convolutions of different Hadamard triples. It was first proposed by Strichartz \cite{S2000}, and recent significant progresses have been made by the authors and many other researchers \cite{AFL2019, AH2014, AHH2019, AHL2015, D2012, DHL2013, DHL2014, DHL2019, FHW2018, FW2017, LW2002}. 

\medskip

\medskip

\subsection{Product-form self-similar tiles and its spectrality}  Despite the fact that Hadamard triples is sufficient to generate spectral self-similar measures, not all spectral self-similar measures can be generated by the Hadamard triples defined in Definition \ref{def_hada}. 

\begin{example}\label{example_product}
{\rm As a simple example, let $N = 4$ and $\D = \{0,1,8,9\}$. Then the self-similar measure $\mu (4,\D)$ is the normalized Lebesgue measure supported on $K = [0,1]\cup [2,3]$. It is also well-known that $K$ is a spectral set with a spectrum $\Z+\{0,1/4\}$. However, since $\D$ is not in distinct representative class of $\Z_4$, there is no ${\mathcal L}$ such that $(N,\D,{\mathcal L})$ forms a Hadamard triple in the sense of Definition \ref{def_hada}.} 
\end{example}

\medskip

The above  example was first observed by Dutkay and Jorgensen \cite{DJ2009}. This  is a special case  of self-similar  tiles  (or self-affine tiles in high dimension). We recall that if $N  = \#\D$ and the attractor has non-empty interior, then the  attractor is a tile of $\R$ by  translation \cite{LW1996-2}. In this case, the attractor $K(N,\D)$ is called a {\it self-similar tile}.  Foundational properties of self-affine tiles were laid down by Lagarias and Wang in the 90s \cite{LW1996-2,LW1996}. They are special type of translational tiles generated by IFS. It has found interesting applications in wavelets and projection of certain product self-similar fractals in certain direction results in such self-similar tiles.

\medskip

A much more challenging  question was  to determine for which $\D$ the attractor forms a self-similar tile for a given integer $N$. Such digits are called the {\it tile digit sets} of $N$.  When $N$ is a prime, $\D$ can only be a complete residue class modulo $N$  and the self-similar tile admits a tiling set  by lattices \cite{B1991, K1992}.  However, when $N$ is not a prime, the classification is much more complicated. Right now, complete classification  is  only available when $N$  is a prime power or  $N  =  p^{a}q$ where $p,q$ are distinct primes \cite{LR2008, LLR2013, LLR2017}. They are  variants of the product-forms introduced  by Lagarias  and Wang \cite{LW1996} whose definition was motivated from \cite{O1978} who studied representation of numbers by different digit systems.

\medskip

Let us introduce some notation for our paper. We will write
$$
A\oplus B = \{a+b: a\in A, b\in B\}
$$
where all elements  $a+b$ are distinct so that $\#(A\oplus B) = (\#A)(\#B)$. If not all elements are distinct, we will write $A+B$ instead. If $a$ is a number, then $a+B = \{a\}\oplus B$.  We will write $A\oplus B \equiv \Z_N (\mbox{mod} \ N)$  if $A\oplus B$ is a complete residue class modulo $N$.  The same is also defined for more summands.
\begin{Def} \cite{LW1996, O1978}
Let  $N\ge 2$ be an integer. We  say that $\D$ is a {\it direct product-form digit set} of $N$ if there exists $\ell_1<\ell_2<...<\ell_k$ such that 
$$
\D = {\mathcal E}_0\oplus N^{\ell_1}{\mathcal E}_1\oplus...\oplus N^{\ell_k}{\mathcal E}_k
$$
where ${\mathcal E}_0\oplus...\oplus{\mathcal E_k}\equiv \Z_N$ (\mbox{mod} \ $N$).
\end{Def}

Nonetheless, product-forms happen in a much more general form in the sense that the summands can be dependent on the elements in the previous level. In the following, for simplicity of our discussion in the introduction, we just define the one-stage situation. The higher stage product-form will be discussed in the later sections. 

\begin{Def}\label{definition_product-form}
Let  $N\ge 2$ be an integer. We  say that $\D$ is a {\it one-stage product-form digit set} of $N$ if there exists a positive integer $r$ such that
$$
\D  = \bigcup_{d_0\in\E_0} (d_0+ N^r\E(d_0))
$$
where $\E_0\oplus\E(d_0)\equiv \Z_N$ for all $d_0\in\E_0$. 
\end{Def}

\begin{example}\label{example_weak-product}
As an example,  it is known that $\{0,1,8,25\}$ is a tile digit set for $N = 4$, indeed,
$$
\{0,1,8,25\} = (\{0\}+ 4\{0,2\})\cup \left(\{1\}+4\{0, 6\}\right)
$$
where it is a product form of one stage with ${\mathcal E}_0 = \{0,1\}$. By definition,  ${\mathcal E}(d_0) = \{0,2\}$ and $\{0,6\}$ for $d_0 = 0, 1$ respectively. Notice that $\{0,1\}\oplus \{0,2\} \equiv\{0,1\}\oplus\{0,6\}\equiv \Z_4$ (mod 4). 
\end{example}
\medskip 
 
The above one-stage product-form is equivalent to the weak product-form \cite{LR2008} . They can cover all tile digit sets if $N = pq$ where $p,q$ are primes.  We note that to cover all tile digit sets for $p^{\alpha}q$, \cite{LLR2017} introduced some higher stage product-form and  higher order  product-form (See Section \ref{section-mod}). 

\medskip

The spectral property of self-similar tiles with product-form tile digit sets was first studied by Fu, He and Lau \cite{FHL2015}. In their paper, it was shown that if the product-form were generated from factors such that  ${\mathcal E}_0\oplus {\mathcal E}_1\oplus...\oplus {\mathcal E}_{k-1} = \{0,1,...,N-1\}$ (it is called a {\it strict product-form}), then the self-similar tiles will be spectral. Strict product-form imposed a rigid structure on all factors ${\mathcal E}_i$ and thus each of the factors must generate a Hadamard triple.  It remains an open question to decide if  self-similar tiles generated by product-forms are spectral measures. This was mentioned explicitly in Question 6.1 of \cite{FHL2015}.

\medskip

\subsection{Overview of the paper.} Motivated from the product-form self-similar tiles, the main purpose of this paper is develop a new class of spectral self-similar measures generated by product-form Hadamard triples that also cover Examples \ref{example_product}, \ref{example_weak-product}. This product-form Hadamard triple combines  distinct Hadamard triples of $N$ into different powers of $N$ while maintaining the contraction ratio being the base scale $1/N$. The precise setup will be presented in Section 2.  Our main technical results (in Theorems \ref{theorem_main1}, \ref{theorem_main2}, \ref{thm_main1}) are to show that the self-similar measures they generated are all spectral measures with a specific structure of its spectrum. These results lead us into several novel applications. 

\medskip
\begin{enumerate}
   \item   We will provide a positive answer to the question raised in \cite{FHL2015} for all tile digit sets for $N = p^{\alpha}q$ and show that all such self-similar tiles are all spectral sets (Theorem \ref{theorem_FHL}). More generally, all these product-form Hadamard triples and spectral self-similar tiles can be produced easily if we are working on Cover-Meyerowitz (CM)-regular cyclic groups (See Section \ref{examples-section}).
   \item We demonstrate the existence of  fractal type product-form digit sets that continue to form  spectral singular self-similar measures despite it does not satisfy the ordinary Hadamard triple definition (for explicit examples, see Example \ref{example24}). In the example, the digit set can be in a  distinct representative (mod $N$), but they are not spectral sets in $\Z_N$. Yet, the corresponding self-similar measure $\mu_{N,\D}$ is still spectral with a spectrum not a subset of $\Z$.
   \item The {\L}aba-Wang conjecture \cite{LW2002} attempted to classify all possible spectral self-similar measures. The right condition for the digit sets $\D$ has never been completely understood. With our product-form Hadamard triples, we are allowed to propose the modified {\L}aba-Wang conjecture (Conjecture \ref{modified_LW}) that might possibly provide the complete classification. We will prove that  the conjecture does hold if $\#\D = 4$ in our next paper.
   \item Our results also shed some light about spectral sets in cyclic groups. The digit sets $\D$ generating the product-form Hadamard triple are not  spectral sets for the base group $\Z_N$, but it is a spectral set in $\Z_{N^k}$ (Proposition \ref{proposition_hada-k}). Hence, product-form Hadamard triples provide us a very flexible way to generate spectral sets in  cyclic groups $\Z_{N^k}$. 
\end{enumerate}

We remark that for non-spectral measures, there has been serious study about which measures admit exponential Riesz bases or Fourier frames. For classical cases, please refer to \cite{KN,DLev2022} and the reference therein. For singular measures,  the assumption of Hadamard triples can also be relaxed to Riesz bases triples or frame triples that guarantee the singular fractal measures admitting exponential Riesz bases or Fourier frame \cite{AFL2019,DEL}.  The product-form Hadamard triple can also be studied in such a relaxed form. We anticipate such a study in the future.

\medskip

\section{Setup and Main Results.} We will rigorously formulate our main results in this section.

\medskip

\subsection{\bf One-stage product-form Hadamard triples.}  We say that $(N,\B_1)$ and $(N,\B_2)$ are {\it equivalent compatible pair} if they share the same spectrum ${\mathcal L}$ so that $(N,\B_1,{\mathcal L})$ and $(N,\B_2,{\mathcal L})$ are Hadamard triples. 

\medskip

\begin{Def}\label{definition_product-form}
Let  $N\ge 2$ be an integer and $\A = \{a_s: s= 0,1...,n-1\}$ be a subset of integers and for each $s$, we have $\B_s$ as another subset of integers. We say that $\D$ is a {\it product-form digit set} generated by Hadamard triples $(N,\A,{\mathcal L}_1)$ and $(N, \B_s,{\mathcal L}_2)$ if there exists $r\ge 0$ such that
\begin{equation}\label{eq_product-form_Hada}
{\mathcal D}= \D_r = \bigcup_{s=0}^{n-1} \left(a_s + N^{r} {\mathcal B}_s\right).    
\end{equation}
and we have the following conditions for ${\mathcal A}$, ${\mathcal B}_s$, ${\mathcal L}_1$ and ${\mathcal L}_2$:
\begin{enumerate}
    \item  $(N, {\mathcal B}_s)$ are equivalent compatible pairs and they share the same spectrum ${\mathcal L}_2$;  
        \item $(N, \A\oplus {\mathcal B}_s, {\mathcal L}_1\oplus{\mathcal L}_2)$ are Hadamard triples for all $s=0,...,n-1$. 
\end{enumerate}
We will call $(N,\D,{\mathcal L}_1\oplus{\mathcal L}_2)$ a {\it product-form Hadamard triple} if there exist $\A,\B_s$ such that $\D$ is written as in (\ref{eq_product-form_Hada}) and (i), (ii) hold.
\end{Def}

 From condition (i) above,  $(N,\B_s)$ are equivalent.  Condition (ii) means that we can combine two different Hadamard triples to form a new larger Hadamard triple.   Producing the product-form in (\ref{eq_product-form_Hada}) means that we put two Hadamard triples in different scales of $N$. 
 When $r\ge 1$, digits in (\ref{eq_product-form_Hada}) are no longer distinct residue classes (mod $N$), so it would not form a Hadamard triple in the ordinary sense in Definition \ref{def_hada}.
Our main theorem is to show however that the associated self-similar measure is still a spectral measure. 
\begin{theorem}\label{theorem_main1}
Let  $(N,\D,{\mathcal L}_1\oplus{\mathcal L}_2)$ be a product-form  Hadamard triple. Then the self-similar measure $\mu_{N,\D}$ is a spectral measure. 
\end{theorem}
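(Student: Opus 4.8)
The plan is to work entirely on the Fourier side and to verify the Jorgensen--Pedersen criterion for a carefully chosen candidate set $\Lambda$. Writing $M_{\D}(\xi)=\frac{1}{\#\D}\sum_{d\in\D}e^{2\pi i d\xi}$ for the mask of $\D$, the self-similar measure satisfies $\fo{\mu_{N,\D}}(\xi)=\prod_{j\ge 1}M_{\D}(\xi/N^j)$, and a set $\Lambda\ni 0$ is a spectrum if and only if $Q_\Lambda(\xi):=\sum_{\lambda\in\Lambda}|\fo{\mu_{N,\D}}(\xi+\lambda)|^2\equiv 1$. The product-form structure enters through the factorization
\[
M_{\D}(\xi)=\frac{1}{\#\A}\sum_{s=0}^{n-1}e^{2\pi i a_s\xi}\,M_{\B_s}(N^r\xi),
\]
which exhibits two distinct sources of zeros: the ``$\B$-zeros'', where $N^r\xi$ meets the common zero set of all the masks $M_{\B_s}$ (these vanish \emph{simultaneously}, which is exactly what the shared-spectrum hypothesis (i) buys us), and the ``$\A$-zeros'', coming from the combined Hadamard property (ii).

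First I would construct $\Lambda$. Mirroring the way the digits place $\A$ at scale $1$ and $\B_s$ at scale $N^r$, I take $\Lambda=\Lambda_1+N^{-r}\Lambda_2$, where $\Lambda_1=\{\sum_{j\ge1}N^{j-1}\ell_{1,j}:\ell_{1,j}\in\mathcal{L}_1\text{, finitely many nonzero}\}$ and $\Lambda_2$ is the analogous set built from $\mathcal{L}_2$. The factor $N^{-r}$ records the scale shift, and is precisely what forces the spectrum out of $\Z$, consistent with the phenomenon advertised in the introduction. I would then verify that $E(\Lambda)$ is orthonormal by showing $\fo{\mu_{N,\D}}(\lambda-\lambda')=0$ for $\lambda\ne\lambda'$: writing the difference as $p+N^{-r}w$ with $p\in\Lambda_1-\Lambda_1$ and $w\in\Lambda_2-\Lambda_2$, a nonzero $w$ is killed at the level dictated by the lowest differing $\mathcal{L}_2$-digit using the simultaneous $\B$-zeros from (i), while a difference with $w=0$ is killed at level one, where $M_{\B_s}(N^{r-1}\ell)=1$ collapses the factorization to $M_{\A}$ and condition (ii) supplies the vanishing. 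A short case analysis, according to whether a given $\mathcal{L}_1$-difference happens to lie in the common $\B$-zero set, shows every pair is separated by one of the two mechanisms.

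The crux is completeness, i.e. $Q_\Lambda\equiv 1$. Here the naive hope of collapsing $\mu_{N,\D}$ to the self-similar measure of a single honest Hadamard triple fails: since the digits of $\D$ repeat modulo $N$ when $r\ge 1$, $\D$ is not an ordinary Hadamard digit set (cf. Example \ref{example_product}), and the blown-up data at scale $N^{r+1}$ inherits repeated residues, so {\L}aba--Wang cannot be quoted directly. Instead I would run the equi-positivity/admissibility machinery in the spirit of \cite{LW2002, DHL2019, AFL2019}. The function $Q_\Lambda$ is entire, satisfies $Q_\Lambda\le 1$ (Bessel, from orthonormality) and $Q_\Lambda(0)=1$, so it suffices to rule out zeros. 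Using the self-similar tower structure of $\Lambda$, a zero of $Q_\Lambda$ would propagate to a limiting configuration whose survival is obstructed by a uniform lower bound on the relevant finite mask products. Establishing this bound is where hypothesis (i) does the essential work: it guarantees that the frequency sets on which the masks $M_{\B_s}$ are small coincide across all branches $s$, so the equi-positive lower estimate is uniform in the choice of $\B_s$ at each level, after which (ii) closes the $\A$-direction.

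To organize this uniform estimate I would use the fiber decomposition $X=\sum_j N^{-j}a_{s_j}+N^{r}\sum_j N^{-j}b_j$, under which $\mu_{N,\D}$ is an average over branch sequences $(s_j)$ of translates of $N^{r}$-dilates of Cantor--Moran measures built from the $\B_s$. The coupling between the $\A$-translation and the branch-dependent $\B$-measure is the genuine technical obstacle, and I expect the hardest single step to be exactly the branch-independent equi-positivity underlying $Q_\Lambda\equiv 1$; it is the common spectrum $\mathcal{L}_2$ that decouples the spectral bookkeeping, letting the Moran factor contribute the fixed set $N^{-r}\Lambda_2$ on every fiber while the $\A$-factor contributes $\Lambda_1$. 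As a sanity check and a possible base for an induction on $r$, the case $r=0$, in which $\D$ reduces to an ordinary Hadamard digit set with spectrum $\mathcal{L}_1\oplus\mathcal{L}_2$, recovers the {\L}aba--Wang theorem directly.
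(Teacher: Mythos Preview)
Your outline points in the right general direction, but the completeness argument has a genuine gap and your candidate spectrum differs structurally from what the paper builds. The paper does \emph{not} take $\Lambda_1+N^{-r}\Lambda_2$ with two independent towers. After reducing to $r=1$, it constructs an integer set $\Lambda\subset\Z$ from the \emph{combined} label set $\mathcal{L}=\mathcal{L}_1\oplus\mathcal{L}_2$, with level-by-level offsets $k_\xi$ dictated by an equi-positivity argument, and first proves the intermediate identity (Theorem~\ref{thm_main1})
\[
\sum_{\lambda\in\Lambda}|\widehat{\mu}(\xi+\lambda)|^2=\frac{1}{n}\sum_{s=0}^{n-1}|M_{\B_s}(\xi)|^2,
\]
which is \emph{not} identically~$1$. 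Only afterwards is a single finite layer $\frac{1}{N}\mathcal{L}_2$ added to complete to~$1$. This two-step structure is the organizing idea you are missing, and your fixed naive tower need not satisfy the tail-term condition that makes the limit of the finite-level identities go through.

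More seriously, the sentence ``$Q_\Lambda$ is entire \dots\ so it suffices to rule out zeros'' is circular: the Bessel sum $\sum_\lambda|\widehat{\mu}(\xi+\lambda)|^2$ is not a~priori entire or even continuous---that is exactly what completeness asserts. The paper's mechanism is finite-level Parseval identities (Lemma~\ref{equi-distibution lemma}) plus a tail bound (Proposition~\ref{lem-main}) verified via a new \emph{average equi-positive condition}: for each $\xi$ there is $k_\xi\in\Z$ with $|\widehat{\mu}(\xi+y+k_\xi)|^2\ge\frac{c}{n}\sum_s|M_{\B_s}(\xi+y)|^2$ for small $|y|$. Finally, you have the role of the common $\B$-zeros backwards: they help with orthogonality, but for equi-positivity they are an \emph{obstacle}, since near such a frequency no uniform lower bound on $|M_{\D/N}|$ is available directly. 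The paper handles this by factoring $P_{\B_s}(x)=F(x)Q_s(x)$ with $F$ collecting all common roots, working with the strictly positive quantity $\frac{1}{n}\sum_s|Q_s(e(-\xi))|^2$, and restricting the weakly-periodic-zero-set argument (Lemma~\ref{W-empty}) to the region where $\sum_s|M_{\B_s}|^2>0$. Without this factorization your ``uniform lower bound on the relevant finite mask products'' fails precisely at the common-zero frequencies.
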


We notice that the spectrum $\Lambda$ will not be a subset of integers. However, they will still be inside a finite union of translates of lattices. It turns out that Theorem \ref{theorem_main1} is going to provide the cornerstone of our theory in which all other complicated cases can be reduced to this case. 

\medskip

\subsection{\bf Higher stage product-form.} We now formulate the higher stage product-form Hadamard triples.

\begin{Def}\label{def-prod-form-Had}
We say that $(N,\D, {\mathcal L}_1\oplus....\oplus {\mathcal L}_{k})$ is a ($k$-stage) {\it  product-form Hadamard triple} if there exist positive integers $\ell_1,...,\ell_{k}$ such that $\D = \D^{(k)}$ is generated in the following process:
\begin{equation}\label{eq_product-form_Hada-Section1}
\left\{\begin{array}{lll}
\ \D^{(0)}&=&\E_0\\
\ 
\D^{(1)}& = &\bigcup_{d_0\in\D^{(0)}} \left(d_0+ N^{\ell_1} {\mathcal E}_1(d_0)\right)\\
\
\D^{(2)}& = &\bigcup_{d_0\in\D^{(1)}} \left(d_1+ N^{\ell_1+\ell_2} {\mathcal E}_2(d_1)\right)\\
&\vdots&\\
\ \D^{(k)} &=& \bigcup_{d_{k-1}\in{\mathcal D}^{(k-1)}} \left(d_{k-1}+ N^{\ell_1+...+\ell_{k}} {\mathcal E}_k(d_{k-1})\right),
\end{array}\right.
\end{equation}
and we have the following condition for ${\mathcal E}_{j}({d_{j-1}})$ and ${\mathcal L}_j$:
\begin{enumerate}
    \item $(N,\E_{0},{\mathcal L}_{0}) $ and $ (N,\E_j(d),{\mathcal L}_{j})$ are Hadamard  triples  for all $d\in{\mathcal D}^{(j-1)}, j=1,...,k$.
    \item For all $1\le m\le k$,  $$(N, \E_0\oplus\E_1({d_0})\oplus...\oplus\E_m({d_{m-1}}), {\mathcal L}_0\oplus{\mathcal L}_1\oplus...\oplus{\mathcal L}_{m})$$ and $$(N, \E_{m}(d_{m-1})\oplus\E_{m+1}(d_{m})\oplus...\oplus\E_k({d_{k-1}}), {\mathcal L}_{m}\oplus{{\mathcal L}_{m+1}}\oplus...\oplus{\mathcal L}_{k})$$ are Hadamard triples for all $d_j\in{\mathcal D}^{(j)}$ with $j = 1,...,k-1$.
\end{enumerate}
\end{Def}

With certain amount of careful work, we will show that $k$-stage product-form Hadamard forms can be reduced to a one-stage Hadamard triples with some higher power of $N$. Applying Theorem \ref{theorem_main1}, we obtain that 
\begin{theorem}\label{theorem_main2}
Let $(N,\D, {\mathcal L}_0\oplus....\oplus {\mathcal L}_{k})$ be a k-stage product-form Hadamard triple. Then the self-similar measure $\mu_{N,\D}$ is a spectral measure. 
\end{theorem}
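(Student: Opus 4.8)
The plan is to peel off a single stage and thereby reduce the $k$-stage situation to the one-stage situation already settled in Theorem~\ref{theorem_main1}, at the cost of passing to a higher power of $N$; I would run this as an induction on $k$. The base case $k=1$ is exactly Theorem~\ref{theorem_main1}: comparing Definition~\ref{def-prod-form-Had} at $k=1$ with Definition~\ref{definition_product-form}, one takes $\A=\E_0$, second factors $\B_{d_0}=\E_1(d_0)$ placed at scale $N^{\ell_1}$, and the prefix condition (ii) with $m=1$ together with (i) furnishes precisely the hypotheses of a one-stage product-form Hadamard triple. Two elementary tools will be used throughout. First, the base-change identity for self-similar measures: for any $s\ge 1$,
\[
\mu_{N,\D}=\mu_{N^{s},\,\D\oplus N\D\oplus\cdots\oplus N^{s-1}\D},
\]
obtained by grouping the infinite convolution $\delta_{\D/N}\ast\delta_{\D/N^2}\ast\cdots$ into consecutive blocks of length $s$. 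Second, the lifting of Hadamard triples: if $(B,\E,{\mathcal L})$ is a Hadamard triple then so is $(Bc,\E,c{\mathcal L})$ for every $c\ge1$, since $e^{2\pi i e (c\ell)/(Bc)}=e^{2\pi i e\ell/B}$.

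For the inductive step I would write $L=\ell_1+\cdots+\ell_k$ and note that the last stage of the construction exhibits
\[
\D=\D^{(k)}=\bigcup_{d\in\D^{(k-1)}}\bigl(d+N^{L}\,\E_k(d)\bigr),
\]
that is, a single union of the $(k-1)$-stage prefix $\D^{(k-1)}$ with the genuine base-$N$ Hadamard triples $\E_k(d)$ sitting at the top scale $N^{L}$. The goal is to read this as a one-stage product-form Hadamard triple whose first factor is the prefix and whose second factors are the $\E_k(d)$. The obstruction is that $\D^{(k-1)}$ is itself a product form whose digits repeat modulo $N$, so it is not a base-$N$ Hadamard triple; this is exactly where the higher power of $N$ must enter. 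Using the base-change identity with a suitable $s$ governed by $L$, I would replace $\mu_{N,\D}$ by $\mu_{M,\widehat{\D}}$ with $M=N^{s}$ and show that the inflated digit set $\widehat{\D}$ carries an honest one-stage product-form Hadamard structure at base $M$: the first factor $\A$ is played by the prefix $\D^{(k-1)}$, which by the companion fact (Proposition~\ref{proposition_hada-k}) that a product-form digit set becomes a genuine Hadamard triple at a suitable power of $N$ is a bona fide base-$M$ Hadamard triple, while the second factors come from the lifts of the $\E_k(d)$.

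It then remains to verify conditions (i) and (ii) of Definition~\ref{definition_product-form} at base $M$, and for this the tower of compatibility hypotheses in Definition~\ref{def-prod-form-Had}(ii) is consumed in full. Condition (i), that the second factors are equivalent compatible pairs sharing one spectrum at base $M$, follows from the lifting lemma together with the fact that all $\E_k(d)$ share the base-$N$ spectrum ${\mathcal L}_k$ and, for the longer suffixes, from the suffix Hadamard triples $\E_m\oplus\cdots\oplus\E_k$. Condition (ii), that each second factor combines with $\A$ to a base-$M$ Hadamard triple, is extracted from the prefix Hadamard triples $\E_0\oplus\cdots\oplus\E_m$ after lifting. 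Once (i) and (ii) hold, Theorem~\ref{theorem_main1} applies at base $M$ and yields that $\mu_{M,\widehat{\D}}=\mu_{N,\D}$ is a spectral measure, closing the induction.

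The hard part will be the bookkeeping in the previous two paragraphs rather than any single inequality, and the difficulty is twofold. First, the digit sets $\E_j(d_{j-1})$ genuinely depend on the previously chosen digits, so the ``second factor'' is not one set but a family indexed by the first-factor digit, and I must show that \emph{every} member of this family is an equivalent compatible pair at base $M$ sharing one common spectrum and combines with the prefix to a base-$M$ Hadamard triple. Second, the passage to base $M$ has to be arranged so that the inflated digit set $\widehat{\D}$ still splits cleanly as a one-stage product form, without carries that would destroy the direct-sum structure underlying the $\oplus$ decompositions. Controlling these two points simultaneously is precisely the careful work needed to make the reduction to Theorem~\ref{theorem_main1} rigorous.
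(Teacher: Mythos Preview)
Your overall strategy --- pass to base $M=N^{s}$ via the base-change identity and exhibit the inflated digit set $\widehat{\D}$ as a one-stage product-form Hadamard triple at base $M$, then invoke Theorem~\ref{theorem_main1} --- is exactly what the paper does (after first normalizing all $\ell_j=1$ by inserting trivial stages $\{0\}$). The reduction is carried out once, directly; the inductive framing is not needed and the paper does not use it.

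The gap is in your identification of the two factors. You assert that after inflating to $\widehat{\D}={\bf D}=\D+N\D+\cdots+N^{k-1}\D$ (with $s=k$) the first factor is the prefix $\D^{(k-1)}$ and the second factors are lifts of the single sets $\E_k(d)$. This is not how ${\bf D}$ decomposes modulo $N^k$: an element of ${\bf D}$ is $\sum_{j=0}^{k-1}N^{j}d_j$ with each $d_j\in\D=\D^{(k)}$, and the contribution of $d_j$ to the residue class modulo $N^k$ involves only its first $k-j$ stages, while its remaining stages land in the second factor. The correct first factor is the mixed object
\[
{\bf D}^{(k-1)}=\D^{(k-1)}+N\D^{(k-2)}+\cdots+N^{k-1}\D^{(0)},
\]
and each second factor $\B_{\bf d}$ is likewise a $k$-stage object assembled from the tails of the $d_j$'s, not merely a lift of one $\E_k(d)$. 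The paper makes this explicit in Theorem~\ref{th_k-to-1-section5}: writing ${\bf D}^{(m)}=\bigcup_{{\bf d}_{m-1}}\bigl({\bf d}_{m-1}+N^m\,{\mathcal G}_m({\bf d}_{m-1})\bigr)$, one finds that ${\mathcal G}_m$ is a \emph{prefix} $\E_0\oplus\cdots\oplus\E_m$ for $0\le m\le k-1$ and a \emph{suffix} $\E_{m-k+1}\oplus\cdots\oplus\E_k$ for $k\le m\le 2k-1$. It is exactly here that both towers in Definition~\ref{def-prod-form-Had}(ii) are consumed --- the prefix triples feed Proposition~\ref{proposition_hada-k} to certify $(N^k,{\bf D}^{(k-1)},{\bf L}_1)$, the suffix triples certify $(N^k,\B_{\bf d},{\bf L}_2)$, and the full-length triple $\E_0\oplus\cdots\oplus\E_k$ delivers the combined condition. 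Your sketch invokes prefix and suffix conditions in the right spirit but attaches them to the wrong pieces; once the correct mod-$N^k$ split of ${\bf D}$ is written down, Theorem~\ref{theorem_main1} applies in one shot.
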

\medskip

These results allow us to give an answer to the spectrality of product-form tile digit sets of $N = p^{\alpha}q$ which was studied previously by Fu, He and Lau \cite{FHL2015}. 

\begin{theorem}\label{theorem_FHL}
Let $N = p^{\alpha}q$ and $\D$ be a tile digit set of $N$. Then the self-similar tile $K(N,\D)$ is a spectral set. 
\end{theorem}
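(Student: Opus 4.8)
The plan is to reduce Theorem \ref{theorem_FHL} to the higher-stage machinery of Theorem \ref{theorem_main2} via the structural classification of tile digit sets for $N=p^\alpha q$. The first step is to observe that, for a tile digit set, spectrality of the tile and spectrality of its self-similar measure are the same statement. Indeed, since $\#\D=N$ and $K(N,\D)$ has nonempty interior, $K(N,\D)$ tiles $\R$ by a lattice \cite{LW1996-2}; consequently the equal-weight self-similar measure $\mu_{N,\D}$ is precisely the normalized Lebesgue measure supported on $K(N,\D)$. Hence it suffices to prove that $\mu_{N,\D}$ is a spectral measure, and by definition $K(N,\D)$ is then a spectral set. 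The goal is therefore to exhibit $\D$ as a (higher-stage) product-form Hadamard triple and invoke Theorem \ref{theorem_main2}.

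The second step is to recall the structure theorem of Lai, Lau and Rao \cite{LLR2017}: every tile digit set of $N=p^\alpha q$ is, up to the admissible translations and normalizations, a higher-stage or higher-order product-form built from factors ${\mathcal E}_j(d)$ that are themselves tiles of $\Z_N$. I would first handle the higher-stage case, where $\D=\D^{(k)}$ is generated by the cascade in \eqref{eq_product-form_Hada-Section1}, and then reduce the higher-order product-forms of Section \ref{section-mod} to that case. The point of the reduction is to realize each classification factor as an $\E_j(d)$ and each admissible exponent as one of the $\ell_1,\dots,\ell_k$, so that the combinatorial shape of $\D$ matches Definition \ref{def-prod-form-Had}.

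The crux is to verify the Hadamard-triple conditions (i) and (ii) of Definition \ref{def-prod-form-Had}, and this is exactly where the arithmetic of $N=p^\alpha q$ is essential. Every factor ${\mathcal E}_j(d)$, as well as every block of consecutive factors $\E_0\oplus\E_1(d_0)\oplus\cdots\oplus\E_m(d_{m-1})$ and $\E_m(d_{m-1})\oplus\cdots\oplus\E_k(d_{k-1})$ appearing in the cascade, is a tile of $\Z_N$. By the Coven–Meyerowitz theory, every tile of $\Z_{p^\alpha q}$ satisfies the $(T1)$ and $(T2)$ conditions, and a mask-polynomial computation then produces an explicit spectrum, so each such block is a spectral set in $\Z_N$; this yields condition (ii). For condition (i) I must further check that the factors $\E_j(d)$ attached to different $d$ form \emph{equivalent} compatible pairs, i.e.\ admit one common spectrum ${\mathcal L}_j$. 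This follows because, at a fixed level $j$, all these factors tile $\Z_N$ against the same complementary set dictated by the $(T1)$ divisor structure, so their canonical Coven–Meyerowitz spectra coincide and can be taken to be a single ${\mathcal L}_j$.

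The main obstacle, and the part demanding the most care, is precisely this dictionary between the tiling-theoretic classification and the spectral requirements of Definition \ref{def-prod-form-Had}: one must show that every sub-collection of consecutive factors produced by the classification is not merely a tile but is spectral in $\Z_N$, and that the level spectra ${\mathcal L}_0,\dots,{\mathcal L}_k$ can be chosen compatibly across all branches $d\in\D^{(j-1)}$ so that conditions (i) and (ii) hold simultaneously. The bookkeeping needed to recast the higher-order product-forms into the higher-stage format of \eqref{eq_product-form_Hada-Section1} is a second, more technical, obstacle. Once both are settled, Theorem \ref{theorem_main2} shows $\mu_{N,\D}$ is a spectral measure, and the identification of $\mu_{N,\D}$ with normalized Lebesgue measure on $K(N,\D)$ from the first step finishes the proof that $K(N,\D)$ is a spectral set.
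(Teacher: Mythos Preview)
Your outline is essentially the paper's own strategy: invoke the Lai--Lau--Rao classification (Theorem \ref{theorem5.8LLR}), recognize each tile digit set as a higher-stage product-form Hadamard triple, and apply Theorem \ref{theorem_main2}. The only methodological difference is that you verify conditions (i) and (ii) of Definition \ref{def-prod-form-Had} abstractly via CM-regularity of $\Z_{p^\alpha q}$ and the {\L}aba spectrum, whereas the paper writes down explicit spectra ${\mathcal L}_j$ for each of the three cases; both arguments are valid, and in fact your route is close in spirit to the paper's own Corollary \ref{corollary_CM}.

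Two small points are worth flagging. First, your remark that $K(N,\D)$ ``tiles $\R$ by a lattice'' is not quite right in general---self-similar tiles tile by a translation set that need not be a lattice---but this is irrelevant to your actual argument, since the identification of $\mu_{N,\D}$ with normalized Lebesgue measure on $K(N,\D)$ only requires $\#\D=N$ and positive Lebesgue measure of the attractor. Second, what you call the ``bookkeeping'' of recasting the higher-order product-forms (case (ii) of Theorem \ref{theorem5.8LLR}) into the higher-stage format is not pure bookkeeping: the paper's key device here is Proposition \ref{high-to-one}, which multiplies $\D$ by $q^{M}$ to convert the higher-order product-form into a genuine first-order modulo product-form of a direct-sum decomposition of $\Z_{p^\alpha q}$. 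Once that multiplication is in place (and spectrality of $\mu_{N,\D}$ is obviously equivalent to spectrality of $\mu_{N,q^M\D}$), your CM-regularity argument for conditions (i) and (ii) goes through cleanly.
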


Tile digit sets of $p^{\alpha}q$ were completely classified in \cite{LLR2017}. One of the main difficulty to showing its spectrality was the existence of higher order product-forms. In this paper, we will show that by multiplying an appropriate factor (g.c.d.($\D$) is no longer equal to 1),  these higher order product-forms can be reduced also to a first order product-form of $\alpha$-stage that we just defined. This product-form also generates product-form Hadamard triple, so we can apply Theorem \ref{theorem_main2} and  prove Theorem \ref{theorem_FHL}.

\medskip
To further generate product-form Hadamard triples, we will need the Coven-Meyerowitz tiling theory \cite{CM1999}. We have noticed that $(N,\D,{\mathcal L})$ forms a Hadamard triple if and only if $\D$ (mod $N$) is a spectral set in the finite cyclic group $\Z_N$ and it has a spectrum in ${\mathcal L}$. A spectral set in $\Z_N$ has close relationship to a tile in $\Z_N$ through two algebraic conditions $(T1)$ and $(T2)$ introduced by Coven and Meyerowitz.   In Section 7, we give a brief survey of the subject. In particular, we will see that a cyclic group with Coven-Meyerowitz condition naturally generates product-form Hadamard triples.

\subsection{Four-digit self-similar measures.} It is clear that number of digits in the digit sets must be a composite number in order for product-form Hadamard triple to exist. In  the following, we specifically study the four-digit self-similar measures. Let $0<\rho<1$ and $\D  =  \{0<a<b<c\}$ so that we  have four contraction maps
$$
f_1(x) =  \rho x, \ f_2(x) = \rho (x+a), \ f_3(x) = \rho(x+b), \ f_4(x) = \rho (x+c).
$$
The unique equal-weighted self-similar measure generated by $\rho$ and $\D$ is denoted by  $\mu_{\rho,\D}$. Using the result we obtain in this paper, by writing back to product-form Hadamard triple of stage one with respect to $N$ below,  we obtain the following cases are spectral self-similar measures.

\begin{theorem}\label{theorem_four_digit}
Let $\rho  = \frac{1}{N}$ where $N= 2^{\beta}m$ for a unique integer $\beta\ge1$ and  $m$ is odd and let 
$$
\D  = \{0,a,  2^t\ell, a+2^t\ell'\}
$$
where  $a,\ell,\ell'$ are positive odd integers and $t$ is not divisible by $\beta$. Then $\mu_{\rho,\D}$ is a spectral self-similar  measure. 
\end{theorem}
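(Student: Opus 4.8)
The plan is to deduce this from Theorem~\ref{theorem_main1} by exhibiting an integer rescaling of $\D$ as a one-stage product-form Hadamard triple in the sense of Definition~\ref{definition_product-form} (note $\mu_{\rho,\D}=\mu_{N,\D}$ since $\rho=1/N$). First I would record a scaling reduction: for any nonzero integer $c$ the measure $\mu_{N,c\D}$ is the pushforward of $\mu_{N,\D}$ under $x\mapsto cx$, and pushing a spectral measure forward by an invertible linear map preserves spectrality, with the spectrum merely rescaled by $c^{-1}$. Hence it suffices to prove $\mu_{N,c\D}$ is spectral for a convenient $c$. Write $t=\beta r_0+t_0$ with $r_0=\lfloor t/\beta\rfloor$; since $\beta\nmid t$ we have $1\le t_0\le\beta-1$, and this is the only place the hypothesis enters. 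I would take $c=m^{r_0}$, so that $m^{r_0}\D=\{0,\,m^{r_0}a,\,2^{t}m^{r_0}\ell,\,m^{r_0}a+2^{t}m^{r_0}\ell'\}$, and propose the product-form data $\A=\{0,m^{r_0}a\}$, $\B_0=\{0,2^{t_0}\ell\}$, $\B_1=\{0,2^{t_0}\ell'\}$ and scale $r=r_0$. The point of multiplying by $m^{r_0}$ is exactly that $N^{r_0}2^{t_0}\ell=2^{\beta r_0}m^{r_0}2^{t_0}\ell=2^{t}m^{r_0}\ell$, so that $0+N^{r_0}\B_0$ and $m^{r_0}a+N^{r_0}\B_1$ recover the two halves of $m^{r_0}\D$; this cancels the odd factor $m^{r_0}$ that otherwise obstructs dividing $2^{t}\ell$ by $N^{r_0}$.

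It then remains to produce spectra ${\mathcal L}_2=\{0,c_2\}$ and ${\mathcal L}_1=\{0,c_1\}$ verifying conditions (i) and (ii). For a two-element set the pair $(N,\{0,x\},\{0,y\})$ is Hadamard iff $xy\equiv N/2\pmod N$. Writing $v_2$ for the $2$-adic valuation, I would choose $c_2$ by the Chinese Remainder Theorem with $c_2\equiv0\pmod m$ and $v_2(c_2)=\beta-1-t_0\ (\ge 0)$, normalised so that $2^{t_0}\ell\,c_2\equiv N/2\pmod N$; because $\ell-\ell'$ is even, the difference $2^{t_0}(\ell-\ell')c_2$ is $\equiv0\pmod N$, so the same $c_2$ automatically satisfies $2^{t_0}\ell'c_2\equiv N/2$, giving (i). For (ii) I would verify that the $4\times4$ matrix $\big(\omega^{d\lambda}\big)$ with rows $d\in\A\oplus\B_s$ and columns $\lambda\in{\mathcal L}_1\oplus{\mathcal L}_2$, where $\omega=e^{2\pi i/N}$, is complex Hadamard. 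Writing each $d=\alpha+b$ and $\lambda=c+c'$, every row-orthogonality relation factors; once $\omega^{(m^{r_0}a)c_1}=-1$ and $\omega^{(2^{t_0}\ell)c_2}=-1$ hold, all but two of the relations vanish automatically, and the two exceptions both reduce to the single requirement $\omega^{(2^{t_0}\ell)c_1}=1$, that is $2^{t_0}\ell\,c_1\equiv0\pmod N$ (with the analogue for $\ell'$).

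The hard part is this last requirement, and it is where $t_0\ge1$ is essential. The Hadamard condition $m^{r_0}a\,c_1\equiv N/2\pmod N$ forces $v_2(c_1)=\beta-1$, while the alternative route to the corner orthogonality, $m^{r_0}a\,c_2\equiv0\pmod N$, is \emph{impossible}, since its $2$-valuation is only $\beta-1-t_0<\beta$. Thus the corner orthogonality cannot be salvaged through $\A$ and must come through the $\B_s$. With $v_2(c_1)=\beta-1$ one computes $v_2(2^{t_0}\ell\,c_1)=t_0+\beta-1\ge\beta$ precisely when $t_0\ge1$, so $2^{t_0}\ell\,c_1\equiv0\pmod{2^\beta}$; combined with the CRT choice $c_1\equiv0\pmod m$ this yields $2^{t_0}\ell\,c_1\equiv0\pmod N$ and likewise for $\ell'$. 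Choosing $c_1$ with $c_1\equiv0\pmod m$ and $c_1\equiv 2^{\beta-1}(m^{r_0}a)^{-1}\pmod{2^\beta}$ therefore simultaneously satisfies the Hadamard condition for $\A$ and the vanishing of the cross terms, so (ii) holds for both $s=0,1$.

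With (i) and (ii) in place, $(N,\,m^{r_0}\D,\,{\mathcal L}_1\oplus{\mathcal L}_2)$ is a product-form Hadamard triple, so Theorem~\ref{theorem_main1} shows $\mu_{N,m^{r_0}\D}$ is spectral, and the scaling reduction transfers spectrality back to $\mu_{\rho,\D}$. I expect the genuine obstacle to be exactly the verification of (ii): one must see that the $2$-adic arithmetic rules out balancing the corner orthogonality on the $\A$-side and pins everything on the $\B$-side, which is feasible only because $\beta\nmid t$ forces $t_0\ge1$. Conversely, this also explains why the hypothesis cannot be dropped, since $\beta\mid t$ would give $t_0=0$ and leave no valid choice of $c_1$.
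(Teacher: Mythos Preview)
Your proposal is correct and follows essentially the same route as the paper: you rescale by $m^{r_0}$ (the paper writes $m^{k}$), set $\A=\{0,m^{r_0}a\}$, $\B_0=\{0,2^{t_0}\ell\}$, $\B_1=\{0,2^{t_0}\ell'\}$ at scale $N^{r_0}$, and then invoke Theorem~\ref{theorem_main1}. The only difference is cosmetic: the paper simply declares ${\mathcal L}_1=\{0,N/2\}$ and an explicit ${\mathcal L}_2$ and says ``one checks'', whereas you build $c_1,c_2$ via the Chinese Remainder Theorem and a $2$-adic valuation argument; in particular your key observation $v_2(2^{t_0}\ell\,c_1)=t_0+\beta-1\ge\beta$ is exactly the content of the paper's parenthetical remark that the construction fails if $r=0$.
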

    
If $t\ge \beta$, then $\D$ would not be a complete residue modulo $N$ and they are spectral due to the fact that they form a product-form Hadamard triple with some ${\mathcal L}_1\oplus{\mathcal L}_2$. We will  show that the converse also holds, meaning that all four-digit self-similar measures must be of the form $\mu_{\rho,\D}$ where $\rho = (2^{\beta}m)^{-1}$ and $\D$ as described in the theorem. As the proof of the converse requires entirely different techniques developed subsequently by different authors and  also  require us to understand a new  case that was never studied previously,  we will put it in a  forthcoming paper \cite{AHL2022}. 

\medskip

The rest of the paper is organized as follows.  In Section 3,  we will lay out some preliminary reductions of the problem and the general strategy to prove Theorem \ref{theorem_main1} for one-stage product-form Hadamard triple. In Section 4 and 5, we will carefully execute the strategy we proposed. In Section 6, we will study the $k$-stage product-form Hadamard triples by showing that it can be reduced to a one-stage Hadamard triple. In Section 7, we will discuss the modulo product-forms of tile digit sets and prove Theorem \ref{theorem_FHL}. In Section 8, we will prove Theorem \ref{theorem_four_digit}, provide some general methods to produce product-form Hadamard triples and then discuss the new examples that are not known previously.

\section{Preliminaries}

In this paper, $e(x) = e^{2\pi i x}$. The Fourier transform of a Borel probability measure is defined to be 
$$
\widehat{\mu}(\xi) = \int e (-\xi x)d\mu (x).
$$
If ${\mathcal A}$ is a finite set of real number, we will write 
$$
M_{\A}(\xi) = \widehat{\delta_{\A}}(\xi) = \frac{1}{\#\A} \sum_{a\in \A} e(-a\xi).
$$
We record the following equivalent conditions, whose proof is now standard, so it will be omitted. 

\begin{Lem}
The following are equivalent.
\begin{enumerate}
    \item $(N,\D,{\mathcal L})$ forms a Hadamard triple. 
    \item $\delta_{\frac{\D}{N}}$ is a spectral measure with spectrum ${\mathcal L}$.
    \item $$
    \sum_{\ell\in{\mathcal L}} |M_{\frac{\D}{N}}(\xi+\ell)|^2 = 1, \ \forall \ \xi\in{\mathbb R}. 
    $$
\end{enumerate}
\end{Lem}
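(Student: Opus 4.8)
The plan is to phrase everything inside the finite-dimensional space $L^2(\delta_{\frac{\D}{N}})$. Since $\delta_{\frac{\D}{N}}$ is supported on the $m:=\#\D$ distinct points $\{d/N:d\in\D\}$, we have $\dim L^2(\delta_{\frac{\D}{N}})=m$, and I would prove $(1)\Leftrightarrow(2)$ and $(2)\Leftrightarrow(3)$ separately, both by computing inner products of exponentials against the atoms of $\delta_{\frac{\D}{N}}$.

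First, for $\lambda,\lambda'\in\R$ a one-line computation gives
\[
\big\langle e(\lambda\,\cdot),\,e(\lambda'\,\cdot)\big\rangle_{L^2(\delta_{\frac{\D}{N}})}=\frac1m\sum_{d\in\D}e\!\Big(\frac{(\lambda-\lambda')d}{N}\Big)=M_{\frac{\D}{N}}(\lambda'-\lambda).
\]
Specialising $\lambda,\lambda'$ to ${\mathcal L}$ shows that the Gram matrix of $E({\mathcal L})$ coincides with $H^{*}H$, where $H=\frac1{\sqrt m}\big(e(d\ell/N)\big)_{d\in\D,\ell\in{\mathcal L}}$ is the matrix in Definition \ref{def_hada}. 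Thus $H$ is unitary (which, being a square condition, also records $\#{\mathcal L}=m$) if and only if $E({\mathcal L})$ is orthonormal. Since an orthonormal family of $m=\#{\mathcal L}$ vectors in the $m$-dimensional space $L^2(\delta_{\frac{\D}{N}})$ is automatically a basis, this yields $(1)\Leftrightarrow(2)$.

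For $(2)\Leftrightarrow(3)$, fix $\xi$ and note $\|e(-\xi\,\cdot)\|^2_{L^2(\delta_{\frac{\D}{N}})}=1$ together with $\langle e(-\xi\,\cdot),e(\ell\,\cdot)\rangle=M_{\frac{\D}{N}}(\xi+\ell)$, so that $Q(\xi):=\sum_{\ell\in{\mathcal L}}|M_{\frac{\D}{N}}(\xi+\ell)|^2$ is exactly the Parseval sum of $e(-\xi\,\cdot)$ against $E({\mathcal L})$. If $E({\mathcal L})$ is an orthonormal basis, Parseval's identity immediately gives $Q\equiv1$. For the converse I would pass to the frame operator $S$ of $E({\mathcal L})$ on $L^2(\delta_{\frac{\D}{N}})$; a short computation identifies $S=HH^{*}$ and, crucially, shows that its $(d,d')$ entry depends only on the difference $d-d'$, i.e. $S$ is Toeplitz-structured. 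The identity $Q\equiv1$ says precisely that the trigonometric polynomial $\xi\mapsto\langle (S-I)e(-\xi\,\cdot),e(-\xi\,\cdot)\rangle$ vanishes identically; because $S-I$ is Toeplitz, each of its diagonals $\{d-d'=k\}$ is constant, so the vanishing of the coefficient of every frequency forces $S-I=0$. Hence $E({\mathcal L})$ is a Parseval frame of unit vectors, which is necessarily orthonormal and complete, giving $(3)\Rightarrow(2)$.

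The only step that is not purely formal — and hence the point I would be most careful about — is this converse $(3)\Rightarrow(2)$: naively $Q\equiv1$ constrains only certain weighted sums of the entries of $S-I$ rather than the entries themselves, and it is exactly the Toeplitz structure of the exponential frame operator that upgrades these averaged constraints to $S=I$. Everything else is the standard Gram-matrix dictionary, and in the finite-dimensional atomic setting no approximation or density argument is needed.
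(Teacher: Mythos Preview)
The paper does not prove this lemma: it states that the equivalences are ``now standard, so it will be omitted.'' Your argument is correct and supplies those standard details. A couple of minor remarks.

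For $(1)\Leftrightarrow(2)$, the Gram matrix you compute is actually the transpose (equivalently, complex conjugate) of $H^{*}H$ rather than $H^{*}H$ itself, but since both are Hermitian this does not affect the conclusion that $H$ is unitary iff $E(\mathcal L)$ is orthonormal.

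For $(3)\Rightarrow(2)$, your Toeplitz argument is sound once $\mathcal L$ is known to be finite, and in fact it recovers $\#\mathcal L=m$ from the constant term of $Q$: expanding
\[
Q(\xi)=\frac{1}{m^{2}}\sum_{k\in\D-\D}n_{k}\,c_{k}\,e\!\left(\frac{k\xi}{N}\right),\qquad n_{k}=\#\{(d,d'):d-d'=k\},\quad c_{k}=\sum_{\ell\in\mathcal L}e\!\left(\frac{k\ell}{N}\right),
\]
the linear independence of the characters $e(k\xi/N)$ for distinct $k$ forces $n_{0}c_{0}=m^{2}$ (giving $\#\mathcal L=m$) and $c_{k}=0$ for $k\neq 0$ (giving $S=I$). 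The more common textbook route --- substitute $\xi=-\ell_{0}$ for some $\ell_{0}\in\mathcal L$ to extract orthogonality first, then use that the exponentials $\{e(-\xi\,\cdot):\xi\in\R\}$ span the $m$-dimensional space $L^{2}(\delta_{\D/N})$ to obtain completeness --- reaches the same endpoint; your frame-operator formulation packages both steps at once.
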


The self-similar measure $\mu = \mu_{N,\D}$ satisfies the following infinite product formula
$$
\widehat{\mu}(\xi) = \prod_{j=1}^{\infty} M_{\D} \left(\frac{\xi}{N^j}\right).
$$
Moreover, we can put it into finite term $\mu = \mu_p\ast \mu_{>p}$ where 
$$
\mu_p = \delta_{\frac{\D}{N}}\ast...\ast \delta_{\frac{\D}{N^p}}
$$
and $\mu_{>p}$ are the rest of the convolutions. The Fourier transform enjoys also the same product formula. 

\subsection{Preliminary reduction.}  There are several preliminary reductions we can do to prove Theorem \ref{theorem_main1}. 

\medskip

\noindent(1). There is no loss of generality to assume $r = 1$ for the one stage product-form. As noticed in the introduction, if $(N,\A,{\mathcal L }_1)$ is a Hadamard triple, then  $\A$  and ${\mathcal L }_1$ must be in distinct residue classes (mod $N$). Therefore, the following are direct summands, 
$${\bf A}={\mathcal A}\oplus N{\mathcal A}+\cdots\oplus N^{r-1}{\mathcal A}.$$ 
$${\bf L}_1={\mathcal L}_1\oplus N{\mathcal L}_1+\cdots\oplus N^{r-1}{\mathcal L}_1,\quad {\bf L}_2={\mathcal L}_2\oplus N{\mathcal L}_2\oplus\cdots\oplus N^{r-1}{\mathcal L}_2.
$$
Suppose that a product-form Hadamard triple $\D_r$ in  (\ref{eq_product-form_Hada}) is given, we define
\begin{eqnarray}\label{D_factorize}
{\bf D}_1&=&\D_r+N\D_r+\cdots+N^{r-1}\D_{r}\nonumber\\
&=&\bigcup_{i_1, \cdots,i_r=0}^{n-1}\left({a_{i_1}}+Na_{i_2}+\cdots+N^{r-1}a_{i_{r}}+N^r({\mathcal B}_{i_1}\oplus N{\mathcal B}_{i_2}\oplus \cdots\oplus N^{r-1}{\mathcal B}_{i_{r-1}})\right).
\end{eqnarray}
The following lemma is known, whose proof can be found in \cite[Lemma 2.5]{S2000}. A more general version of the lemma will also be proved in Proposition \ref{proposition_hada-k}.

\medskip

\begin{Lem}\label{reduction lemma}
Suppose that $(N,{\mathcal A}, {\mathcal L}_1)$ and  $(N,{\mathcal B}_i, {\mathcal L}_2)$ are Hadamard triples. Then $(N^r,{\bf A}, {\bf L}_1)$ and  $(N^r,\bigoplus_{j=1}^rN^{j-1}{\mathcal B}_{i_j}, {\bf L}_2)$ also form integral Hadamard triples for any $0\le i_1, \cdots, i_r\le n-1$.
\end{Lem}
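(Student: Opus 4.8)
The plan is to isolate a single \emph{two-scale composition lemma} for Hadamard triples and then iterate it $r-1$ times; both assertions then fall out by induction on $r$. Before starting I would record the orthogonality reformulation: since $\#\D=\#{\mathcal L}$ the Hadamard matrix is square, so $(N,\D,{\mathcal L})$ is a Hadamard triple if and only if its rows are orthonormal, i.e. $\sum_{d\in\D}e\big(d(\ell-\ell')/N\big)=0$, equivalently $M_{\D}\big((\ell-\ell')/N\big)=0$, for all distinct $\ell,\ell'\in{\mathcal L}$. I would also note the elementary fact that the digits of any Hadamard triple lie in distinct residue classes mod $N$ (otherwise two rows coincide), and likewise for the spectrum; this is exactly what guarantees that every set written with $\oplus$ below is genuinely a direct sum, and it makes the cardinality bookkeeping automatic.

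The core step is: if $(N_1,\D_1,{\mathcal L}_1)$ and $(N_2,\D_2,{\mathcal L}_2)$ are Hadamard triples, then so is $(N_1N_2,\ \D_1\oplus N_1\D_2,\ N_2{\mathcal L}_1\oplus{\mathcal L}_2)$. To prove it I would write a digit as $d=d_1+N_1d_2$ and a spectral point as $\ell=N_2\ell_1+\ell_2$, expand $d(\ell-\ell')/(N_1N_2)$, discard the integer part, and observe that the row inner product factors as
\begin{equation*}
\sum_{d}e\!\left(\frac{d(\ell-\ell')}{N_1N_2}\right)=\left[\sum_{d_1\in\D_1}e\!\left(\frac{d_1(N_2u+v)}{N_1N_2}\right)\right]\left[\sum_{d_2\in\D_2}e\!\left(\frac{d_2 v}{N_2}\right)\right],
\end{equation*}
where $u=\ell_1-\ell_1'$ and $v=\ell_2-\ell_2'$. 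Then I split into two cases. If $v\neq0$, the second factor equals $\#\D_2\,M_{\D_2}(-v/N_2)=0$ because $v$ is a nonzero difference in ${\mathcal L}_2$ and $(N_2,\D_2,{\mathcal L}_2)$ is Hadamard. If $v=0$, then $u\neq0$, and the cross term drops out so the first factor collapses to $\#\D_1\,M_{\D_1}(-u/N_1)=0$ by the Hadamard property of $(N_1,\D_1,{\mathcal L}_1)$. Hence distinct rows are orthogonal and the composite is a Hadamard triple.

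Finally I would run the induction on $r$. For the first assertion, writing $\mathbf{A}=\mathcal A\oplus N\mathbf{A}'$ with $\mathbf{A}'=\mathcal A\oplus\cdots\oplus N^{r-2}\mathcal A$ of scale $N^{r-1}$, I apply the composition lemma with $(N_1,\D_1,{\mathcal L}_1)=(N,\mathcal A,{\mathcal L}_1)$ and $(N_2,\D_2,{\mathcal L}_2)=(N^{r-1},\mathbf{A}',\mathbf{L}_1')$, where by the inductive hypothesis $\mathbf{L}_1'=\mathcal L_1\oplus\cdots\oplus N^{r-2}\mathcal L_1$; the resulting spectrum $N^{r-1}{\mathcal L}_1\oplus\mathbf{L}_1'$ reassembles, as a set, to exactly $\mathbf{L}_1$. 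The second assertion is identical, peeling off $\mathcal B_{i_1}$ as the outer scale-$N$ factor and the remaining $\mathcal B_{i_2}\oplus\cdots\oplus N^{r-2}\mathcal B_{i_r}$ as the inner scale-$N^{r-1}$ factor; the point is that all the $\mathcal B_{i_j}$ share the \emph{single} spectrum ${\mathcal L}_2$, so every invocation of the composition lemma is legitimate and the spectra accumulate to $\mathbf{L}_2$.

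I expect the only genuinely delicate point to be the factorization-and-case-analysis in the composition lemma: one must verify that the cross term $d_1v/(N_1N_2)$ does no harm, and this works precisely because it survives only inside the first factor, which is needed only in the case $v=0$, where it vanishes. Everything else—matching cardinalities, checking directness of the sums via the distinct-residue remark, and the inductive reassembly of the spectra—is routine.
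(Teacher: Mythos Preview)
Your proof is correct. The two-scale composition lemma is proved cleanly, the factorization is right (the integer part $d_2u$ drops out of the exponential exactly as you say), and the inductive reassembly of digits and spectra is accurate.

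The paper does not actually supply a proof of this lemma: it cites Strichartz \cite[Lemma~2.5]{S2000} and remarks that a more general version is established later as Proposition~\ref{proposition_hada-k}. That proposition is proved by a somewhat different route: rather than verifying row orthogonality directly, it works with the Parseval-type characterization $\sum_{\lambda}|M_{\A}(\xi+\lambda)|^2=1$ and uses the dual fact that if $\Lambda$ is a spectrum for $\delta_{\A}$ then $\A$ is a spectrum for $\delta_{\Lambda}$. Your argument is more elementary and self-contained: a single factorization plus a two-case analysis, iterated. The Parseval approach in Proposition~\ref{proposition_hada-k} has the advantage of handling the more general \emph{branching} situation where the inner digit set depends on the outer digit (the $\mathcal C_j(a_{j-1})$ in that proposition), which is needed for the $k$-stage product-forms later in the paper; your composition lemma as stated requires a fixed inner $\D_2$, though for the lemma at hand this is exactly what is assumed.
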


\medskip

Consequently, we factorize the self-similar measure $\mu = \mu_{N,{\mathcal D}_r}$ as
\begin{eqnarray}\label{eq_factorize}
    \mu_{N,{\mathcal D}_r} &=& \delta_{\frac{1}{N^r}{\bf D}_1}\ast\delta_{\frac{1}{N^{2r}}{\bf D}_1}\ast\delta_{\frac{1}{N^{3r}}{\bf D}_1}\ast\cdots\nonumber\\
    &=& \mu_{N^r,{\bf D}_1}.
\end{eqnarray}
Hence, if one can show that $\mu_{N,\D_1}$ is a spectral measure, then we can apply the result to $\mu_{N^r,{\bf D}_1}$ to show $\mu_{N,\D_r}$ is a spectral measure. Hence,  without loss of generality, we only need to consider the case $r=1$. 

\medskip

\noindent (2). We can assume  without loss of generality that $0\in \B_s$ for all $s = 0,1,...,n-1$. If this is not the case, we take $b_s$ to be the smallest element in $\B_s$, then 
$$
\D_r = \bigcup_{s=0}^{n-1} ((a_s+N^rb_s) + N^r (\B_s-b_s)). 
$$
Letting $\widetilde{\A} = \{a_s+N^rb_s: s = 0,1,...,n-1\}$ and $\widetilde{B}_s = \B_s-b_s$, then $(N,\widetilde{\A}, {\mathcal L}_1)$ and $(N,\widetilde{\B}_s, {\mathcal L}_2)$ are still Hadamard triples. Moreover,  one can show that $\D_r$ is still a one-stage product-form Hadamard triple. 

\medskip

\noindent(3).  We can assume  without loss of generality that g.c.d.$(\D_0) = 1$, where $$\D_0 = \bigcup_{s=0}^{n-1} (a_s+\B_s).$$ If $g = \mbox{g.c.d.}(\D_0) >1$. Then $g$ divides all $a_s+b_s$. In particular, $g$ divides $a_s$ since we assume $0\in\B_s$. Hence, $g$ also divides $b_s$ for all $b_s\in\B_s$. Therefore, 
$$
\D_1 = g\widetilde{\D}_1, \ \widetilde{\D}_1 = \bigcup_{s=0}^{n-1}(\widetilde{a}_s+N \widetilde{\B}_s)
$$
where $\widetilde{\A} = \{\widetilde{a}_s\}$ and $(N, \widetilde{\A}, g{\mathcal L}_1)$ and $(N, \widetilde{B}_s, g{\mathcal L}_2)$, $(N, \widetilde{\A}\oplus\widetilde{B_s}, g{\mathcal L}_1\oplus g{\mathcal L}_2)$ are Hadamard triples. Now, g.c.d.$(\widetilde{\D}_0) = 1$ where $\widetilde{\D}_0 = \bigcup_{s=0}^{n-1}(\widetilde{a}_s+ \widetilde{B}_s$). If one can show that $\mu_{N,\widetilde{\D}_1}$ is a spectral measure with spectrum $\Lambda$, then $\mu_{N,\D_1}$ is also a spectral measure with spectrum $\frac{1}{g}\Lambda$. 

\medskip

Because of the three reductions, we will assume that 
\begin{equation}\label{eqD_1}
\D  = \D_1 = \bigcup_{s=0}^{n-1} \left( a_s + N {\mathcal B}_s\right)    
\end{equation}
where $0\in \B_s$ and g.c.d.$(\D_0)$=1 where $\D_0 = \bigcup_{s=0}^{n-1}(a_s+\B_s)$.

\subsection{Strategy of the proof of Theorem \ref{theorem_main1}.} From the Jorgensen-Pedersen lemma \cite{JP1998}, $\Lambda$ is a spectrum for a probability measure $\mu$ if and only if 
$$
Q(\xi) = \sum_{\lambda\in\Lambda} |\widehat{\mu}(\xi+\lambda)|^2 = 1, \ \forall \xi\in \R.
$$
In this paper, Instead of showing directly $\mu_{N,\D}$ is spectral, we are going to show the following theorem. 
\begin{theorem}\label{thm_main1}
Let $\mu = \mu_{N,\D}$ where $\D$ is a one-stage  product-form Hadamard triple satisfying (\ref{eqD_1}). Then there exists $\Lambda\subset\Z$ such that for all $\xi\in\R$, 
$$
\sum_{\lambda\in\Lambda} |\widehat{\mu}(\xi+\lambda)|^2=\frac1n \sum_{s=0}^{n-1} |M_{{\mathcal B}_s}(\xi)|^2.
$$
\end{theorem}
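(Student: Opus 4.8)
The plan is to realize the left-hand side as a limit of iterates of a transfer (Ruelle) operator and to identify the right-hand side as the relevant fixed point of that operator. Writing $\A=\{a_0,\dots,a_{n-1}\}$ and using $0\in\mathcal{B}_s$, a direct expansion of \eqref{eqD_1} gives $M_{\D}(\xi)=\frac1n\sum_{s=0}^{n-1}e(-a_s\xi)\,M_{\mathcal{B}_s}(N\xi)$, so that for every integer $\ell$,
\[
M_{\D}\Big(\tfrac{\xi+\ell}{N}\Big)=\frac1n\sum_{s=0}^{n-1}e\Big(-a_s\tfrac{\xi+\ell}{N}\Big)M_{\mathcal{B}_s}(\xi),
\]
since $M_{\mathcal{B}_s}$ is $1$-periodic. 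Put $\mathcal{L}=\mathcal{L}_1\oplus\mathcal{L}_2$ (a set of distinct residues mod $N$, as $(N,\A\oplus\mathcal{B}_s,\mathcal{L})$ is a Hadamard triple), assume after a harmless translation that $0\in\mathcal{L}_1$ and $0\in\mathcal{L}_2$, and take as candidate
\[
\Lambda=\Big\{\sum_{j\ge 0}N^j\ell_j:\ \ell_j\in\mathcal{L},\ \ell_j=0\text{ for all but finitely many }j\Big\}\subset\Z .
\]
Writing $Q_S(\xi)=\sum_{\lambda\in S}|\widehat\mu(\xi+\lambda)|^2$ and $\mathcal R g(\xi)=\sum_{\ell\in\mathcal{L}}|M_{\D}((\xi+\ell)/N)|^2 g((\xi+\ell)/N)$, the self-similar relation $\Lambda=\mathcal{L}\oplus N\Lambda$ (with unique representations) together with $\widehat\mu(\xi)=M_{\D}(\xi/N)\widehat\mu(\xi/N)$ and the $1$-periodicity of $M_{\D}$ shows that the truncations $\Lambda_k$ satisfy $Q_{\Lambda_k}=\mathcal R^k(|\widehat\mu|^2)$ and that $\mathcal R Q_\Lambda=Q_\Lambda$.

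The key algebraic step is to prove that $h(\xi):=\frac1n\sum_{s}|M_{\mathcal{B}_s}(\xi)|^2$ is a fixed point, $\mathcal R h=h$. Substituting the displayed formula for $M_{\D}((\xi+\ell)/N)$ and $h((\xi+\ell)/N)=\frac1n\sum_t|M_{\mathcal{B}_t}((\xi+\ell)/N)|^2$ and expanding both squares, each summand acquires the character sum $\sum_{\ell\in\mathcal{L}}e\big(-((a_s+b)-(a_{s'}+b'))\tfrac{\ell}{N}\big)$ with $a_s,a_{s'}\in\A$ and $b,b'\in\mathcal{B}_t$. This is exactly where hypothesis (ii) is used: because $(N,\A\oplus\mathcal{B}_t,\mathcal{L})$ is a Hadamard triple, orthogonality of the columns of its Hadamard matrix makes this sum equal to $nm\,\delta_{a_s+b,\,a_{s'}+b'}$, which by directness of $\A\oplus\mathcal{B}_t$ equals $nm\,\delta_{s,s'}\delta_{b,b'}$. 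The resulting Kronecker deltas annihilate every off-diagonal term ($s\neq s'$) and every $b\neq b'$ term, the surviving diagonal collapses uniformly in $t$, and one is left precisely with $h(\xi)$.

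With the fixed point available, the upper bound is soft: from $|\widehat\mu(\xi/N)|\le1$ and Cauchy--Schwarz applied to $M_{\D}(\xi/N)=\frac1n\sum_s e(-a_s\xi/N)M_{\mathcal{B}_s}(\xi)$ one gets $|\widehat\mu(\xi)|^2\le h(\xi)$. Since $\mathcal R$ has nonnegative coefficients it is monotone, so $Q_{\Lambda_k}=\mathcal R^k(|\widehat\mu|^2)\le\mathcal R^k h=h$, and as $\Lambda_k\nearrow\Lambda$ (because $0\in\mathcal{L}$) this yields $Q_\Lambda\le h$; in particular $Q_\Lambda(0)=h(0)=1$. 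The remaining, and genuinely hard, step is the matching lower bound $Q_\Lambda\ge h$, for which I would invoke the equi-positivity machinery of Dutkay--Haussermann--Lai. The function $g:=h-Q_\Lambda$ is nonnegative, continuous, satisfies $\mathcal R g=g$, and vanishes at $0$; evaluating the invariance at a zero of $g$ and using nonnegativity of the terms forces $g$ to vanish at every point $(\,\cdot+\ell)/N$ where $M_{\D}$ does not vanish, and iterating this propagation along the contractions $\xi\mapsto(\xi+\ell)/N$ together with equi-positivity of $\{|\widehat\mu(\cdot+t)|^2\}$ on a fixed compact neighbourhood of the attractor (valid since $\widehat\mu$ is continuous with $\widehat\mu(0)=1$) spreads the zero set of $g$ to a dense set, whence $g\equiv0$. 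I expect this propagation/equi-positivity step to be the main obstacle: in contrast to the classical Hadamard-triple setting the fixed point $h$ is non-constant and can vanish, so the argument must be localized near $0$ where $h>0$, and one must check that the contractions indexed by the non-vanishing characters of $M_{\D}$ reach enough of the line. Once $Q_\Lambda=h$ is proved, Theorem~\ref{theorem_main1} follows by completing $\Lambda$ to $\frac{\mathcal{L}_2}{N}\oplus\Lambda$ and using hypothesis (i), namely the Hadamard property of $(N,\mathcal{B}_s,\mathcal{L}_2)$, to sum $h$ over $\mathcal{L}_2/N$ to the constant $1$.
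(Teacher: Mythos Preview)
Your transfer-operator framework is correct and matches the paper's strategy up through the upper bound: the identity $\mathcal R h=h$ with $h(\xi)=\frac1n\sum_s|M_{\mathcal B_s}(\xi)|^2$ is exactly the paper's Lemma~\ref{equi-distibution lemma} (case $p=1$), and the monotonicity argument giving $Q_\Lambda\le h$ is the content of Proposition~\ref{lem-main}(i). The problem is the lower bound, and here your proposal has a genuine gap.

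You commit at the outset to the \emph{naive} $\Lambda=\{\sum_{j\ge0}N^j\ell_j:\ell_j\in\mathcal L\}$ and try to show $g:=h-Q_\Lambda\equiv0$ by propagating the zero at $0$ along the contractions $\xi\mapsto(\xi+\ell)/N$. But those contractions only reach the compact attractor $K=\{\sum_{j\ge1}\ell_j N^{-j}:\ell_j\in\mathcal L\}$, and since $\#\mathcal L=nm<N$ this is a proper Cantor subset of $[0,1]$, not a dense subset of $\R$. Moreover $Q_\Lambda$ is \emph{not} $1$-periodic (the naive $\Lambda$ is not closed under integer translation), so you cannot extend by periodicity either. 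The equi-positivity machinery of Dutkay--Haussermann--Lai that you invoke does not rescue this: in their argument (and in this paper) equi-positivity is used to \emph{construct} a modified $\Lambda$, not to certify that a pre-chosen $\Lambda$ works. Indeed, even in the classical Hadamard-triple setting the naive candidate can fail to be a spectrum, which is precisely why {\L}aba--Wang allowed the modifications.

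The paper's route to the lower bound is substantively different from yours. Rather than fixing $\Lambda$ and analysing $g$, it keeps the freedom to replace each block $\Gamma_{p_k}$ by a congruent set $\widetilde\Gamma_{p_k}\equiv\Gamma_{p_k}\pmod{N^{p_k}}$ and chooses the shifts $N^{p_k}k_{\xi}$ so that a tail-term estimate $|\widehat\mu_{>q_k}(\xi+\lambda)|^2\ge\frac{c}{n}\sum_s|M_{\mathcal B_s/N^{q_k}}(\xi+\lambda)|^2$ holds (Proposition~\ref{lem-main}(ii), Theorem~\ref{th-main}). Establishing that such shifts exist requires an \emph{average} equi-positivity condition adapted to the non-constant fixed point $h$, and proving that condition (Section~5) needs two further ingredients you do not have: a factorisation of the common zeros of the $M_{\mathcal B_s}$ into a polynomial $F$ so that the residual functions $Q_s$ have no common zero (Lemma~\ref{lem-positive}), and an argument that the associated ``weakly periodic'' zero set $\mathcal W(\widehat\mu)=\{\xi\in\mathcal O:\widehat\mu(\xi+k)=0\ \forall k\in\Z\}$ is empty (Lemma~\ref{W-empty}). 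Your remark that ``the argument must be localized near $0$ where $h>0$'' is pointing in the right direction, but the actual localisation is carried by these two lemmas, not by zero-propagation for $g$.
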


\medskip

Let us prove our main theorem in the introduction assuming the above. 

\medskip

\noindent{\it Proof of Theorem \ref{theorem_main1} assuming Theorem \ref{thm_main1}}. As we have discussed in Subsection 2.1, we can without loss of generality assume $\D$ satisfies (\ref{eqD_1}) with all the stated properties. We now let 
$$
\Lambda_0= \frac{1}{N}{\mathcal L}_2 +\Lambda.
$$
Then applying Theorem \ref{thm_main1} and interchanging summation, 
$$
\sum_{\ell_2\in{\mathcal L}_2}\sum_{\lambda\in\Lambda} \left|\widehat{\mu}\left(\xi+\frac1N\ell_2+\lambda\right)\right|^2 = \frac1n\sum_{s=0}^{n-1} \sum_{\ell_2\in{\mathcal L}_2}\left|M_{{\mathcal B}_s}\left(\xi+\frac1N\ell_2\right)\right|^2. 
$$
As $\frac{1}{N}{\mathcal L}_2$ is a spectrum of ${\mathcal B}_s$ for all $s=0,...,n-1$, the above sum is equal to $1$.   This shows that Theorem \ref{theorem_main1} holds true.\qquad$\Box$

\medskip
The next two sections will be devoted to prove Theorem \ref{thm_main1}. The framework of the proof is based on the previous proofs for the standard Hadamard triple. However, certain careful and non-trivial modifications will be needed as now the exponential set $E(\Z)$ is not a complete set for $L^2(\mu)$. Let us summarize the prcedure as below.

\begin{enumerate}
    \item We will first investigate the identity for all finite level iterations with a large class of potential spectrum $\Lambda$.
    \item Then we will introduce a tail-term condition ensuring the limit-taking process will go through. 
    \item Finally, we study a modified periodic zero set to ensure the tail-term condition is always satsifed by some choice of $\Lambda$. 
\end{enumerate} 

\begin{remark}
Before we go into the proof, one may have noticed a seemingly easy approach.  The observation is that $\mu_{N,\D}$ is a convolution of two measures $\nu_1= \mu_{N^2,\D}$ and $\nu_2 (\cdot) = \nu_1 (N (\cdot))$. i.e. 
$$
\mu_{N,\D} = \nu_1\ast\nu_2. 
$$
Using Proposition \ref{proposition_hada-k} that we will later prove,  $(N^2, \D,N{\mathcal L}_1\oplus{\mathcal L}_2)$ is a Hadamard triple. Then by the result of {\L}aba and Wang, the measure $\nu_1$ is a spectral measure and so is $\nu_2$.  

\medskip

Despite having such a simple convolutional structure, due to the lack of periodicity on the Fourier transform of the measures, it is not possible to conclude $\mu_{N,\D}$ is a spectral measure with the spectrum formed by the Minkowski sum of  the two individual spectra.  Indeed, it is known that there exists spectral measures $\nu_1,\nu_2$ whose convolutional product $\nu_1\ast\nu_2$ is NOT spectral, and $\nu_1\ast\nu_2$ is still a  equal-weighted self-similar measure as well as it is the normalized Hausdorff measure on the support (see e.g. \cite{DHL2014}). 
\end{remark}






\medskip


\medskip

\section{Finite level iteration and sufficient conditions}

To begin our proof, in this section, 
we first establish several  finite level identities and then introduce some sufficient conditions for the limit to pass through. 

\medskip

\subsection{Finite level identities} We first establish the identity which allows some more general periodic function. It will be useful for later purposes. 

\begin{Lem}\label{lemma3.0}
Let the product-form Hadamard triple  $(N, {\mathcal D}, {\mathcal L} = {\mathcal L}_1\oplus {\mathcal L}_2)$ be given as in (\ref{eqD_1}). For each $s = 0,1,...,n-1$, we let  $Q_s(\xi)$  be an integer periodic function and define 
$$
{\bf M}(\xi) = \sum_{s=0}^{n-1} e\left(-\frac{a_s\xi}{N}\right) Q_s(\xi).
$$
Then 
$$
\sum_{\ell\in {\mathcal L}}|{\bf M}(\xi+\ell)|^2|M_{\frac{{\mathcal B}_s}{N}}(\xi+\ell)|^2  =  \frac{1}{n} \sum_{s=0}^{n-1}\left|Q_s (\xi)\right|^2.
$$
\end{Lem}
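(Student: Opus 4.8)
The plan is to expand both squared moduli into exponential sums and then let the summation over the spectrum $\mathcal{L}=\mathcal{L}_1\oplus\mathcal{L}_2$ annihilate all cross terms via the (row) orthogonality of the \emph{combined} Hadamard matrix. First I would use that each $Q_s$ is integer periodic and that $\mathcal{L}\subset\Z$, so $Q_s(\xi+\ell)=Q_s(\xi)$ for every $\ell\in\mathcal{L}$. Hence $\mathbf{M}(\xi+\ell)=\sum_{s'}e(-a_{s'}(\xi+\ell)/N)\,Q_{s'}(\xi)$ and
$$
|\mathbf{M}(\xi+\ell)|^2=\sum_{s',s''}Q_{s'}(\xi)\overline{Q_{s''}(\xi)}\,e\!\left(-\frac{(a_{s'}-a_{s''})(\xi+\ell)}{N}\right),
$$
while at the same time
$$
|M_{\frac{\mathcal{B}_s}{N}}(\xi+\ell)|^2=\frac{1}{(\#\mathcal{B}_s)^2}\sum_{b,b'\in\mathcal{B}_s}e\!\left(-\frac{(b-b')(\xi+\ell)}{N}\right).
$$

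Next I would substitute both expansions into the left-hand side and interchange the (finite) order of summation so that the sum over $\ell\in\mathcal{L}$ becomes innermost. Grouping the two exponentials, that inner sum is
$$
\sum_{\ell\in\mathcal{L}}e\!\left(-\frac{\big((a_{s'}+b)-(a_{s''}+b')\big)\,\ell}{N}\right).
$$
The crucial observation is that $a_{s'}+b$ and $a_{s''}+b'$ both lie in $\mathcal{A}\oplus\mathcal{B}_s$, which, for the fixed $s$, is precisely the digit set of the Hadamard triple $(N,\mathcal{A}\oplus\mathcal{B}_s,\mathcal{L})$ supplied by condition (ii). Because the associated exponential matrix is a finite \emph{square} unitary matrix (its digit set and its spectrum both have $n\cdot\#\mathcal{B}_s$ elements), row-orthogonality $HH^{*}=I$ gives $\sum_{\ell\in\mathcal{L}}e(-(d-d')\ell/N)=\#(\mathcal{A}\oplus\mathcal{B}_s)\,\delta_{d,d'}$ for all digits $d,d'$. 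Since $\mathcal{A}\oplus\mathcal{B}_s$ is a \emph{direct} sum, the equality $a_{s'}+b=a_{s''}+b'$ forces simultaneously $s'=s''$ and $b=b'$.

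Therefore only the diagonal terms $s'=s'',\ b=b'$ survive; on those the $\xi$-dependent exponentials are trivial, and counting the surviving terms collapses the whole expression to a constant multiple of $\sum_{s}|Q_s(\xi)|^2$, which, once the normalizing constants of $\mathbf{M}$ and of $M_{\mathcal{B}_s}$ are accounted for, is the asserted identity. I expect the one genuinely load-bearing step to be the passage in the previous paragraph: recognizing that the $\ell$-sum is governed by the row-orthogonality of the \emph{combined} triple $(N,\mathcal{A}\oplus\mathcal{B}_s,\mathcal{L})$ rather than of the two factor triples separately, and that the direct-sum structure makes the resulting Kronecker delta diagonalize the $\mathcal{A}$-index and the $\mathcal{B}_s$-index at once. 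Notably, this mechanism never uses any property of the particular $s$ chosen in the factor $|M_{\mathcal{B}_s/N}|^2$ beyond the Hadamard-ness of $(N,\mathcal{A}\oplus\mathcal{B}_s,\mathcal{L})$, so the right-hand side comes out independent of $s$ --- which is exactly the content the lemma records and what makes it usable in the subsequent limiting argument.
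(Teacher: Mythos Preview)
Your proposal is correct and follows essentially the same route as the paper: expand both squared moduli, pull the $\ell$-sum inside, and use the row-orthogonality of the Hadamard triple $(N,\mathcal{A}\oplus\mathcal{B}_s,\mathcal{L})$ together with the direct-sum structure to kill the off-diagonal terms. The only point worth tightening is the bookkeeping of constants: as stated, $\mathbf{M}$ carries no $\tfrac{1}{n}$, yet the identity (and the paper's own computation) requires one---in the paper's applications $\mathbf{M}$ is always taken with the $\tfrac{1}{n}$ normalization, so make that explicit when you ``account for the normalizing constants.''
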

 \begin{proof}
For all $\ell \in{\mathcal L}$,
\begin{align}\label{eq-distr}
\left|{\bf M} (\xi+\ell)\right|^2\left|M_{\frac{{\mathcal B}_s}{N}} (\xi+\ell)\right|^2 =& \left(\frac1{n^2}\sum_{s_1, s_2=0}^{n-1}e\left(-\frac{(a_{s_1}-a_{s_2})(\xi+\ell)}{N}\right) Q_{s_1} (\xi+\ell)\overline{Q_{s_2}} (\xi+\ell)\right)\nonumber\\
&\cdot\left(\frac1{m^2}\sum_{j_1,j_2=0}^{m-1} e\left(-\frac{(b_{j_1}-b_{j_2})(\xi+\ell)}{N}\right)\right) \nonumber\\
=& \frac1{n^2m^2}\sum_{s_1, s_2=0}^{n-1}\sum_{j_1, j_2=0}^{m-1} e\left(-\frac{(a_{s_1}+b_{j_1}-a_{s_2}-b_{j_2})(\xi+\ell)}{N}\right)Q_{{s_1}}(\xi)\overline{{Q_{s_2}}} (\xi).
\end{align}
By condition (ii) of the product-form Hadamard triple  in Definition \ref{definition_product-form}.  $(N, {\mathcal A}\oplus{\mathcal B}_s, {\mathcal L})$ forms  a Hadarmard triple for all $s$, we have 
\begin{align*}
&\frac{1}{mn}\sum_{\ell\in {\mathcal L}}e\left(-\frac{(a_{s_1}+b_{j_1}-a_{s_2}-b_{j_2})(\xi+\ell)}{N}\right)\\
=&e\left(-\frac{(a_{s_1}+b_{j_1}-a_{s_2}-b_{j_2})\xi}{N}\right)M_{\frac{{\mathcal L}}{N}}(a_{s_1}+b_{j_1}-a_{s_2}-b_{j_2}) \\
=&\delta_0(s_1-s_2)\delta_0(j_1-j_2),
\end{align*}
where $\delta_0$ is the Dirac measure at $0$. It implies that
$$
\begin{aligned}
&\sum_{\ell\in {\mathcal L}}|{\bf M}(\xi+\ell)|^2|M_{\frac{{\mathcal B}_s}{N}}(\xi+\ell)|^2\\
=&\frac{1}{n^2m^2} \sum_{s_1, s_2=0}^{n-1}\sum_{j_1, j_2=0}^{m-1}Q_{s_1} (\xi)\overline{Q_{s_2}} (\xi) \sum_{\ell\in{\mathcal L}} e\left(-\frac{(a_{s_1}+b_{j_1}-a_{s_2}-b_{j_2})(\xi+\ell)}{N}\right) \\
=& \frac{1}{nm} \sum_{s=0}^{n-1}\sum_{j=0}^{m-1}\left|Q_s(\xi)\right|^2\\
=&\frac{1}{n} \sum_{s=0}^{n-1}\left|Q_s(\xi)\right|^2.
\end{aligned}
$$
This completes the proof.
\end{proof}

We now  establish the identity in other finite level iterations. Here, we let 
$$
\Gamma_p= {\mathcal L}+N{\mathcal L}+...+N^{p-1}{\mathcal L}.
$$
$\Gamma_p$ are in  distinct  residue classes modulo $N^p$ since those of ${\mathcal L}$ are in distinct classes modulo $N$, by condition (ii) in Definition \ref{definition_product-form}. The set of points  $\widetilde{\Gamma}_p$ such that 
$\widetilde{\Gamma}_{p}\equiv \Gamma_p$ (mod $N^p$) collects $(\#{\mathcal L})^p$ points where each of the points is congruent modulo  $N^p$ to exactly one element in $\Gamma_p$. All such $\widetilde{\Gamma}_{p}$ will satisfy the following lemma. 

\begin{Lem}\label{equi-distibution lemma}
Let $s\in\{0,1,...,n-1\}$. Then for all $p\ge 1$ and $\widetilde{\Gamma}_p\equiv\Gamma_p\pmod {N^p}$,
$$
\sum_{\lambda\in\widetilde{\Gamma_p}}\left|\widehat{\mu}_p(\xi+\lambda)\right|^2\left|M_{\frac{{\mathcal B}_s}{N^p}}(\xi+\lambda)\right|^2=\frac1n \sum_{i=0}^{n-1} \left|M_{{\mathcal B}_i}(\xi)\right|^2
$$
\end{Lem}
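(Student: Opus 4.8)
The plan is to prove the identity by induction on $p$, at each step stripping off the innermost factor of the self-similar product and feeding the resulting single-layer sum back through Lemma \ref{lemma3.0}. First I would reduce to the canonical representative: since $\mathcal{D}\subset\mathbb{Z}$ and $\mathcal{B}_s\subset\mathbb{Z}$, the mask $M_{\mathcal{D}}$ is $1$-periodic, so $\widehat{\mu}_p(\xi)=\prod_{j=1}^{p}M_{\mathcal{D}}(\xi/N^j)$ and $M_{\frac{\mathcal{B}_s}{N^p}}(\xi)=M_{\mathcal{B}_s}(\xi/N^p)$ are both $N^p$-periodic. Hence the summand depends only on $\lambda$ modulo $N^p$, and replacing $\Gamma_p$ by any $\widetilde{\Gamma}_p\equiv\Gamma_p\pmod{N^p}$ leaves the sum unchanged; so it is enough to treat $\widetilde{\Gamma}_p=\Gamma_p$.

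For the base case $p=1$, grouping the digits as $\mathcal{D}=\bigcup_s(a_s+N\mathcal{B}_s)$ gives the factorization
$$
\widehat{\mu}_1(\xi)=M_{\mathcal{D}}\Big(\frac{\xi}{N}\Big)=\frac{1}{n}\sum_{s=0}^{n-1}e\Big(-\frac{a_s\xi}{N}\Big)M_{\mathcal{B}_s}(\xi),
$$
which is exactly the function $\mathbf{M}(\xi)$ of Lemma \ref{lemma3.0} for the integer-periodic choice $Q_s=M_{\mathcal{B}_s}$. Applying Lemma \ref{lemma3.0} with the fixed index $s$ immediately gives $\sum_{\ell\in\mathcal{L}}|\widehat{\mu}_1(\xi+\ell)|^2|M_{\frac{\mathcal{B}_s}{N}}(\xi+\ell)|^2=\frac{1}{n}\sum_{i}|M_{\mathcal{B}_i}(\xi)|^2$, which is the claim at level $1$.

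For the inductive step I would use the splitting $\Gamma_{p+1}=\Gamma_p\oplus N^p\mathcal{L}$, writing each $\lambda\in\Gamma_{p+1}$ uniquely as $\lambda=\gamma+N^p\ell$ with $\gamma\in\Gamma_p$, $\ell\in\mathcal{L}$. Factoring $\widehat{\mu}_{p+1}(\xi)=\widehat{\mu}_p(\xi)M_{\mathcal{D}}(\xi/N^{p+1})$ and using the $N^p$-periodicity of $\widehat{\mu}_p$ to pull $|\widehat{\mu}_p(\xi+\gamma)|^2$ out of the $\ell$-sum, the level-$(p+1)$ sum becomes
$$
\sum_{\gamma\in\Gamma_p}|\widehat{\mu}_p(\xi+\gamma)|^2\sum_{\ell\in\mathcal{L}}\Big|M_{\mathcal{D}}\Big(\frac{\zeta+\ell}{N}\Big)\Big|^2\Big|M_{\frac{\mathcal{B}_s}{N}}(\zeta+\ell)\Big|^2,\qquad \zeta:=\frac{\xi+\gamma}{N^p},
$$
where I have rewritten $M_{\frac{\mathcal{B}_s}{N^{p+1}}}(\xi+\gamma+N^p\ell)=M_{\frac{\mathcal{B}_s}{N}}(\zeta+\ell)$. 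Since $M_{\mathcal{D}}(\cdot/N)=\mathbf{M}(\cdot)$ as in the base case, Lemma \ref{lemma3.0} collapses the inner $\ell$-sum to $\frac{1}{n}\sum_{s'}|M_{\mathcal{B}_{s'}}(\zeta)|^2=\frac{1}{n}\sum_{s'}|M_{\frac{\mathcal{B}_{s'}}{N^p}}(\xi+\gamma)|^2$. Interchanging the two sums then leaves $\frac{1}{n}\sum_{s'}\big(\sum_{\gamma\in\Gamma_p}|\widehat{\mu}_p(\xi+\gamma)|^2|M_{\frac{\mathcal{B}_{s'}}{N^p}}(\xi+\gamma)|^2\big)$, and by the induction hypothesis every inner bracket equals $\frac{1}{n}\sum_i|M_{\mathcal{B}_i}(\xi)|^2$; summing the $n$ equal terms recovers $\frac{1}{n}\sum_i|M_{\mathcal{B}_i}(\xi)|^2$, closing the induction.

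The step I expect to be the crux is precisely the closing of the induction: it works only because the right-hand side $\frac{1}{n}\sum_i|M_{\mathcal{B}_i}(\xi)|^2$ is \emph{independent of the fixed index $s$} on the left. After peeling a layer, the identity is re-applied with a different digit block $\mathcal{B}_{s'}$, and all $n$ contributions must agree; this uniformity is exactly what conditions (i)--(ii) of Definition \ref{definition_product-form} guarantee (the $\mathcal{B}_s$ are equivalent compatible pairs of a common size $m$, and each $\mathcal{A}\oplus\mathcal{B}_s$ is a Hadamard triple with spectrum $\mathcal{L}$, which is what drives Lemma \ref{lemma3.0}). Everything else is careful bookkeeping of scales: the direct-sum decomposition of $\Gamma_{p+1}$, the rescaling $\zeta=(\xi+\gamma)/N^p$, and tracking the power of $N$ at which each mask $M_{\mathcal{B}}$ and $M_{\mathcal{D}}$ is evaluated.
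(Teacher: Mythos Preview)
Your proposal is correct and follows essentially the same approach as the paper's proof: both first reduce from $\widetilde{\Gamma}_p$ to $\Gamma_p$ via $N^p$-periodicity, then induct on $p$, with the base case coming directly from Lemma \ref{lemma3.0} applied with $Q_s=M_{\mathcal{B}_s}$, and the inductive step splitting $\Gamma_{p+1}=\Gamma_p\oplus N^p\mathcal{L}$, peeling off one layer of $M_{\mathcal{D}}$, collapsing the inner $\ell$-sum via the $p=1$ identity, and closing with the induction hypothesis (which, as you correctly stress, applies uniformly over the new index $s'$).
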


\medskip

\begin{proof} Recall that $\widehat{\mu}_p(\xi) = \prod_{j=1}^p M_{\D}(N^{-p}\xi)$. As $\D$ and ${\mathcal B}_s$ are all integer sets,  $\widehat{\mu}_p(\xi)$ and $M_{\frac{{\mathcal B}_s}{N^p}}(\xi)$  are $N^p-$period, we have
$$\sum_{\lambda\in\widetilde{\Gamma_p}}\left|\widehat{\mu}_p(\xi+\lambda)\right|^2\left|M_{\frac{{\mathcal B}_s}{N^p}}(\xi+\lambda)\right|^2=\sum_{\lambda\in{\Gamma_p}}\left|\widehat{\mu}_p(\xi+\lambda)\right|^2\left|M_{\frac{{\mathcal B}_s}{N^p}}(\xi+\lambda)\right|^2.$$
It suffices to prove the identity for $\Gamma_p$. We prove the result by induction. When $p=1$, $\mu_p=\delta_{{\mathcal D}/N}$ and $\Gamma_p= {\mathcal L}$. Note that 
$$
M_{\frac{{\mathcal D}}{N}} (\xi) = \frac{1}{n}\sum_{s=0}^{n-1} e\left(-\frac{a_s\xi}{N}\right) M_{{\mathcal B}_s}(\xi).
$$
Applying Lemma \ref{lemma3.0} with $Q_s(\xi) = M_{{\mathcal B}_s}(\xi)$, we have 
$$
\begin{aligned}
\sum_{\ell\in {\mathcal L}}|M_{\frac{{\mathcal D}}{N}}(\xi+\ell)|^2|M_{{\frac{{\mathcal B_s}}{N}}}(\xi+\ell)|^2
=&\frac{1}{n} \sum_{i=0}^{n-1}\left|M_{\mathcal B_i} (\xi)\right|^2.\\
\end{aligned}
$$
Suppose that the assertion is true for $p-1$. We decompose $\Gamma_p= \Gamma_{p-1}+ N^{p-1}{\mathcal L}$. Note that $\mu_{p-1}$ is $N^{p-1}$-periodic. Now our proof of lemma is completed after the following calculation. 
$$
\begin{aligned}
\sum_{\lambda\in\Gamma_p}|\widehat{\mu}_p(\xi+\lambda)|^2|M_{\frac{{\mathcal B}_s}{N^p}}(\xi+\lambda)|^2=& \sum_{\lambda\in\Gamma_{p-1}}|\widehat{\mu}_{p-1}(\xi+\lambda)|^2\sum_{\ell\in{\mathcal L}}\left|M_{\frac{{\mathcal D}}{N}}\left(\frac{\xi+\lambda}{N^{p-1}}+ \ell\right)\right|^2\left|M_{\frac{{\mathcal B}_s}{N}}\left(\frac{\xi+\lambda}{N^{p-1}}+\ell\right)\right|^2 \\
=&\sum_{\lambda\in\Gamma_{p-1}}|\widehat{\mu}_{p-1}(\xi+\lambda)|^2\left(\frac1n\sum_{j=0}^{n-1}|M_{\frac{{\mathcal B}_j}{N^{p-1}}} (\xi+\lambda)|^2\right) \ (\mbox{by the case $p=1$})\\
=& \frac1n\sum_{j=0}^{n-1}\sum_{\lambda\in\Gamma_{p-1}}\left|\widehat{\mu}_{p-1}(\xi+\lambda)\right|^2\left|M_{\frac{{\mathcal B}_j}{N^{p-1}}} (\xi+\lambda)\right|^2\\
=& \frac1n\sum_{j=0}^{n-1} \frac1n\sum_{i=0}^{n-1}|M_{{\mathcal B}_i}(\xi)|^2 \ (\mbox{by induction hypothesis})\\
=&\frac1n\sum_{i=0}^{n-1}|M_{{\mathcal B}_i}(\xi)|^2.
\end{aligned}
$$
\end{proof}

\bigskip

Heuristically, if we can take limit as $p\to \infty$ in the above lemma, we will obtain Theorem \ref{thm_main1} immediately. However, even in the case of Hadamard triple,   this is not going to work out directly. Extra assumption on the $\Lambda$ is needed. In particular, we need a tail-term condition in (\ref{tail-term}).

\medskip

\subsection{Tail-term condition.} Given a subsequence of positive integers $\{p_k\}$, letting $q_0 = 0$ and  $q_k=p_1+\cdots+p_k$.
We can define inductively an orthogonal set by $\Lambda_0 = \{0\}$.
$$
\Lambda_{q_k}= \widetilde{\Gamma}_{p_1} +N^{q_{1}}\widetilde{\Gamma}_{p_2}+\cdots+ N^{q_{k-1}}\widetilde{\Gamma}_{p_k},  
$$
where $0\in\widetilde{\Gamma}_{p_j}$ and $\widetilde{\Gamma}_{p_j}\equiv \Gamma_{p_j}(\mbox{mod} \ N^{p_k})$ for all $1\le j\le k$. Then $\Lambda_{q_k}\subset\Lambda_{q_{k+1}}$. Let
\begin{equation}\label{eq_Lambda_candidate}
\Lambda = \bigcup_{k=1}^{\infty} \Lambda_{q_k}.    
\end{equation}

\medskip

\begin{Prop}\label{lem-main}
For any $\Lambda$ defined in (\ref{eq_Lambda_candidate}), we have 
\begin{enumerate}
    \item $$\sum_{\lambda\in\Lambda} |\widehat{\mu}(\xi+\lambda)|^2 \le \frac{1}{n} \sum_{s=0}^{n-1} |M_{\B_s}(\xi)|^2$$
    \item Suppose that there is a positive constant $c>0$ such that 
    \begin{equation}\label{tail-term}
   |\widehat{\mu}_{>q_k} (\xi+\lambda)|^2 \ge \frac{c}{n} \sum_{s=0}^{n-1} |M_{\frac{\B_s}{N^{q_k}}}(\xi+\lambda)|^2,\quad \forall \ \lambda\in\Lambda_{k}, k\geq1,   \ \forall \xi\in[0,1]  
    \end{equation}
    Then $$\sum_{\lambda\in\Lambda} |\widehat{\mu}(\xi+\lambda)|^2 = \frac{1}{n} \sum_{s=0}^{n-1} |M_{\B_s}(\xi)|^2$$
\end{enumerate}
\end{Prop}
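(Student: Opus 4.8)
Set $T(\xi):=\frac1n\sum_{s=0}^{n-1}|M_{\B_s}(\xi)|^2$ for the target function and write $Q_m(\xi)=\sum_{\lambda\in\Lambda_{q_m}}|\widehat{\mu}(\xi+\lambda)|^2$, so that $\sum_{\lambda\in\Lambda}|\widehat{\mu}(\xi+\lambda)|^2=\lim_{m\to\infty}Q_m(\xi)$ by monotone convergence (the $\Lambda_{q_m}$ increase to $\Lambda$ since $0\in\widetilde{\Gamma}_{p_j}$). The argument rests on two facts about $\Lambda_{q_m}$. First, collecting the congruences $N^{q_{j-1}}\widetilde{\Gamma}_{p_j}\equiv N^{q_{j-1}}\Gamma_{p_j}$ shows $\Lambda_{q_m}\equiv\Gamma_{q_m}\pmod{N^{q_m}}$, so Lemma \ref{equi-distibution lemma} applies with $p=q_m$ and $\widetilde{\Gamma}_{q_m}=\Lambda_{q_m}$; averaging that lemma over $s$ and using $T(\tfrac{\xi+\lambda}{N^{q_m}})=\frac1n\sum_s|M_{\frac{\B_s}{N^{q_m}}}(\xi+\lambda)|^2$ gives the master identity
$$\sum_{\lambda\in\Lambda_{q_m}}|\widehat{\mu}_{q_m}(\xi+\lambda)|^2\,T\!\left(\tfrac{\xi+\lambda}{N^{q_m}}\right)=T(\xi).\qquad(\ast)$$
Second, $\widehat{\mu}=\widehat{\mu}_{q_m}\cdot\widehat{\mu}_{>q_m}$ with $\widehat{\mu}_{>q_m}(\eta)=\widehat{\mu}(\eta/N^{q_m})$.

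For part (i), I would first record the pointwise bound $|\widehat{\mu}(\zeta)|^2\le T(\zeta)$ for every $\zeta$: writing $M_{\frac{\D}{N}}(\zeta)=\frac1n\sum_s e(-a_s\zeta/N)M_{\B_s}(\zeta)$ and applying Cauchy--Schwarz gives $|M_{\frac{\D}{N}}(\zeta)|^2\le T(\zeta)$, and then $|\widehat{\mu}(\zeta)|^2=|M_{\frac{\D}{N}}(\zeta)|^2|\widehat{\mu}(\zeta/N)|^2\le T(\zeta)$ since $|\widehat{\mu}|\le1$. Taking $\zeta=(\xi+\lambda)/N^{q_m}$ yields $|\widehat{\mu}_{>q_m}(\xi+\lambda)|^2\le T(\tfrac{\xi+\lambda}{N^{q_m}})$; substituting into $Q_m=\sum_\lambda|\widehat{\mu}_{q_m}(\xi+\lambda)|^2|\widehat{\mu}_{>q_m}(\xi+\lambda)|^2$ and invoking $(\ast)$ gives $Q_m(\xi)\le T(\xi)$ for all $m$, hence the upper bound after passing to the monotone limit.

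For part (ii), subtracting $Q_m$ from $(\ast)$ produces the exact defect identity
$$T(\xi)-Q_m(\xi)=\sum_{\lambda\in\Lambda_{q_m}}|\widehat{\mu}_{q_m}(\xi+\lambda)|^2\,g\!\left(\tfrac{\xi+\lambda}{N^{q_m}}\right),\qquad g:=T-|\widehat{\mu}|^2\ge0.$$
The two elementary observations that drive the proof are that $g$ is continuous with $g(0)=T(0)-|\widehat{\mu}(0)|^2=1-1=0$, and that the tail-term hypothesis (\ref{tail-term}) is precisely the inequality $g(\tfrac{\xi+\lambda}{N^{q_m}})\le(1-c)\,T(\tfrac{\xi+\lambda}{N^{q_m}})$ for $\xi\in[0,1]$ and $\lambda\in\Lambda_{q_m}$. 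I would fix a finite $F\subset\Lambda$ and, for $m$ large enough that $F\subset\Lambda_{q_m}$, split the defect sum over $F$ and its complement. On $F$ the terms tend to $0$, since for each fixed $\lambda$ the argument $(\xi+\lambda)/N^{q_m}\to0$ and $g$ is continuous at $0$ with $g(0)=0$; on the complement I bound $g$ by $(1-c)T$ using (\ref{tail-term}) and then apply $(\ast)$ to get a bound $(1-c)\big(T(\xi)-\sum_{\lambda\in F}|\widehat{\mu}_{q_m}(\xi+\lambda)|^2T(\tfrac{\xi+\lambda}{N^{q_m}})\big)$. Letting $m\to\infty$ (the $F$-sum converging to $\sum_{\lambda\in F}|\widehat{\mu}(\xi+\lambda)|^2$) and then $F\uparrow\Lambda$ gives, with $h(\xi):=T(\xi)-\sum_{\lambda\in\Lambda}|\widehat{\mu}(\xi+\lambda)|^2\ge0$, the inequality $h(\xi)\le(1-c)\,h(\xi)$; since $c>0$ this forces $h(\xi)=0$, which is (ii).

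The step I expect to be the genuine obstacle is exactly this final closing of the estimate. A single, direct use of the tail bound only delivers $\sum_{\lambda\in\Lambda}|\widehat{\mu}(\xi+\lambda)|^2\ge cT$, which is strictly weaker than equality, and the bound does not improve by iterating the peeling, because the same constant $c$ reappears at every level without compounding. What makes it work is the order of the two limits: one must first send $m\to\infty$ to exploit $g(0)=0$ and kill the ``diagonal'' finite block $F$ — whose total weight converges to $\sum_{\lambda\in\Lambda}|\widehat{\mu}(\xi+\lambda)|^2$ — and only afterwards let $F$ exhaust $\Lambda$; this turns the crude lower bound into the self-improving inequality $h\le(1-c)h$. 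I would also note that, as the hypothesis (\ref{tail-term}) is only assumed on $\xi\in[0,1]$, the identity is obtained on that range, which is the range needed for the subsequent application in the proof of Theorem \ref{theorem_main1}.
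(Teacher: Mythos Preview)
Your proof is correct and follows essentially the same route as the paper's. Both arguments rest on the averaged form $(\ast)$ of Lemma~\ref{equi-distibution lemma}, the Cauchy--Schwarz bound $|M_{\D/N}|^2\le T$ for part~(i), and the same two-limit trick for part~(ii): the paper splits $\Lambda_{q_{k+t}}$ as $\Lambda_{q_k}\cup(\Lambda_{q_{k+t}}\setminus\Lambda_{q_k})$, applies the tail condition at level $q_{k+t}$ on the complement, sends $t\to\infty$ and then $k\to\infty$ to reach $\Psi_\Lambda\ge\Psi_\Lambda+c(T-\Psi_\Lambda)$, which is exactly your $h\le(1-c)h$ written additively; your choice of an arbitrary finite $F$ in place of $\Lambda_{q_k}$ and your defect-function packaging $g=T-|\widehat\mu|^2$ are cosmetic variants of the same idea.
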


\begin{proof}
\rm{(i)} From the definition of $\Lambda_{k}$, 
$$\Lambda_{k}\equiv \Gamma_{q_k}\quad \pmod{N^{q_k}}.$$
Note that for each $k\geq1$, we have 
$$|\mu(\xi)|^2\le |\mu_{q_k+1}(\xi)|^2= |\mu_{q_k}(\xi)|^2\left|M_{\frac{\D}{N^{q_k}+1}}(\xi)\right|^2,$$
and 
$$
\begin{aligned}
\left|M_{\frac{\D}{N^{q_k}+1}}(\xi)\right|^2=&\left|\frac1n\sum_{s=0}^{n-1}e\left(-\frac{a_s\xi}{N^{q_k+1}}\right)M_{\frac{{\mathcal B}_s}{N^{q_k}}}(\xi)\right|^2\\
\le&\left(\frac1n\sum_{s=0}^{n-1}|M_{\frac{{\mathcal B}_s}{N^{q_k}}}(\xi)|\right)^2\\
\le&\frac1n\sum_{s=0}^{n-1}|M_{\frac{{\mathcal B}_s}{N^{q_k}}}(\xi)|^2.
\end{aligned}
$$
So for any $k\geq1$, 
$$
\begin{aligned}
\sum_{\lambda\in\Lambda_{q_k}} |\widehat{\mu}(\xi+\lambda)|^2\le&  \sum_{\lambda\in\Lambda_{q_k}} |\widehat{\mu}_{q_k}(\xi+\lambda)|^2\left|M_{\frac{\D}{N^{q_k+1}}}(\xi+\lambda)\right|^2\\
\le&\frac{1}{n} \sum_{s=0}^{n-1} \sum_{\lambda\in\Lambda_{q_k}} |\widehat{\mu}_{q_k}(\xi+\lambda)|^2\left|M_{\frac{{\mathcal B}_s}{N^{q_k}}}(\xi+\lambda)\right|^2\\
=&\frac{1}{n} \sum_{s=0}^{n-1} \frac{1}{n} \sum_{i=0}^{n-1} \left|M_{{{\mathcal B}_i}}(\xi)\right|^2\quad (\text{using Lemma \ref{equi-distibution lemma}})\\
=&\frac{1}{n} \sum_{s=0}^{n-1} \left|M_{{{\mathcal B}_s}}(\xi)\right|^2,\\
\end{aligned}
$$
Letting $k\to \infty$, we have 
$$\sum_{\lambda\in\Lambda} |\widehat{\mu}(\xi+\lambda)|^2\le\frac{1}{n} \sum_{s=0}^{n-1} \left|M_{{{\mathcal B}_s}}(\xi)\right|^2.$$

\rm{(ii)}  Let us write $$\Psi_{q_k}(\xi)=\sum_{\lambda\in\Lambda_{q_k}}\left|\widehat{\mu}(\xi+\lambda)\right|^2\quad \text{and}\quad \Psi_\Lambda(\xi)=\sum_{\lambda\in\Lambda}\left|\widehat{\mu}(\xi+\lambda)\right|^2.$$ 
It has been proved in \rm{(i)} that 
$$\Psi_\Lambda(\xi)\le \frac1n\sum_{s=0}^{n-1}|M_{{{\mathcal B}_s}}(\xi)|^2.$$
For any $k,t\geq1$, we have the following identity:
\begin{eqnarray}\label{eq2.1}
\Psi_{q_{k+t}}(\xi)&=&\Psi_{q_k}(\xi)+\sum_{\lambda\in\Lambda_{q_{k+t}}\setminus\Lambda_{q_k}}\left|\widehat{\mu}(\xi+\lambda)\right|^2\nonumber\\
 &=&\Psi_{q_k}(\xi)+\sum_{\lambda\in\Lambda_{q_{k+t}}\setminus\Lambda_{q_k}}\left|\widehat{\mu}_{q_{n+t}}(\xi+\lambda)\right|^2
 \left|\widehat{\mu}_{>q_{n+t}}(\xi+\lambda)\right|^2\nonumber\\
 &\geq&\Psi_{q_k}(\xi)+\frac{c}{n}\sum_{s=0}^{n-1}\sum_{\lambda\in\Lambda_{q_{k+t}}\setminus\Lambda_{q_k}}\left|\widehat{\mu}_{q_{k+t}}(\xi+\lambda)\right|^2|M_{\frac{{\mathcal B}_s}{N^{q_{k+t}}}}(\xi+\lambda)|^2\nonumber\\
&=&\Psi_{q_k}(\xi)+\frac{c}{n}\sum_{s=0}^{n-1}\left(\frac1n\sum_{j=0}^{n-1}|M_{{{\mathcal B}_j}}(\xi)|^2-\sum_{\lambda\in\Lambda_{{q_k}}}\left|\widehat{\mu}_{q_{k+t}}(\xi+\lambda)\right|^2|M_{\frac{{\mathcal B}_s}{N^{q_{k+t}}}}(\xi+\lambda)|^2\right) \nonumber\\
&=&\Psi_{q_k}(\xi)+c\left(\frac1n\sum_{s=0}^{n-1}|M_{{{\mathcal B}_s}}(\xi)|^2-\frac{1}{n}\sum_{s=0}^{n-1}\sum_{\lambda\in\Lambda_{{q_k}}}\left|\widehat{\mu}_{q_{k+t}}(\xi+\lambda)\right|^2|M_{\frac{{\mathcal B}_s}{N^{q_{k+t}}}}(\xi+\lambda)|^2\right).
\end{eqnarray}
Letting $t\to+\infty$, we have
\begin{eqnarray*}
\Psi_{\Lambda}(\xi)&\geq& \Psi_{q_k}(\xi)+c\left(\frac1n\sum_{s=0}^{n-1}|M_{{{\mathcal B}_s}}(\xi)|^2-\frac{1}{n}\sum_{s=0}^{n-1}\Psi_{q_k}(\xi)\right)\\
&=&\Psi_{q_k}(\xi)+c\left(\frac1n\sum_{s=0}^{n-1}|M_{{{\mathcal B}_s}}(\xi)|^2-\Psi_{q_k}(\xi)\right).
\end{eqnarray*}
And letting $k\to+\infty$, we have
$$\Psi_{\Lambda}(\xi)\geq \Psi_{\Lambda}(\xi)+c\left(\frac1n\sum_{s=0}^{n-1}|M_{{{\mathcal B}_s}}(\xi)|^2-\Psi_{\Lambda}(\xi)\right).$$
So $\Psi_{\Lambda}(\xi)\geq\frac1n\sum_{s=0}^{n-1}|M_{{{\mathcal B}_s}}(\xi)|^2$, which together with  \rm{(i)}  imply the result.
\end{proof}

\subsection{Equi-positivty condition}

Condition (\ref{tail-term}) gives a control on the tail-term of the Fourier transform of $\widehat{\mu}$. Because of the self-similarity, we can formulate it as the following equi-positivity condition. Such a condition was studied previously in \cite{DHL2019}. In our situation, it needs to be readjusted as an average condition.

\begin{Def}
Let $\D=\D_1$ be the product-form defined in (\ref{eq_product-form_Hada}). 
We say that the probability measure $\mu = \mu_{N,\D}$ satisfies the {\it  average equi-positive condition} if there exists  $c>0$ and $\delta>0$  such that for all $\xi\in [0, 1]$, there exists $k_{\xi}\in\Z$ such that 
\begin{equation}\label{eq-main-1}
|\widehat{\mu}(\xi+y+k_{\xi})|^2\geq \frac{c}{n}\sum_{s=0}^{n-1}|M_{{\mathcal B_s}}(\xi+y)|^2
\end{equation}
whenever $|y|<\delta$. 
\end{Def}

\medskip

The average equi-positive condition is sufficient to construct an $\Lambda$ verifying the tail-term condition \eqref{tail-term}, leading to the following theorem. 

\begin{theorem}\label{th-main}
Let $(N, {\mathcal D}, {\mathcal L})$ be a product-form Hadamard triple and the average equi-positive condition is satisfied.  Then there is a $\Lambda\subset\Z$ such that for all $\xi\in[0, 1]$, 
$$
\sum_{\lambda\in\Lambda} |\widehat{\mu}(\xi+\lambda)|^2=\frac1n \sum_{s=0}^{n-1} |M_{{\mathcal B}_s}(\xi)|^2.
$$
\end{theorem}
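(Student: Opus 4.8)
The plan is to use the average equi-positive condition to construct a set $\Lambda$ of the form \eqref{eq_Lambda_candidate} for which the tail-term condition \eqref{tail-term} holds; Proposition \ref{lem-main}(ii) then delivers the stated identity. The first move is to rewrite \eqref{tail-term} in scale-invariant form. By the self-similarity of $\mu$ one has $\widehat{\mu}_{>q_k}(\xi+\lambda)=\widehat{\mu}\big((\xi+\lambda)/N^{q_k}\big)$ and $M_{\mathcal{B}_s/N^{q_k}}(\xi+\lambda)=M_{\mathcal{B}_s}\big((\xi+\lambda)/N^{q_k}\big)$, so that, writing $\eta=(\xi+\lambda)/N^{q_k}$, the tail-term condition at level $q_k$ amounts to
$$|\widehat{\mu}(\eta)|^2\ge \frac{c}{n}\sum_{s=0}^{n-1}|M_{\mathcal{B}_s}(\eta)|^2 \qquad \text{for every } \lambda\in\Lambda_{q_k},\ \xi\in[0,1].$$
The decisive structural point is that each $M_{\mathcal{B}_s}$ is $1$-periodic, so the right-hand side depends only on $\eta \bmod 1$, whereas $\widehat{\mu}$ is not periodic; meanwhile the freedom to choose the lifts $\widetilde{\Gamma}_{p_k}$ modulo $N^{p_k}$ translates precisely into the freedom to shift $\eta$ by an arbitrary integer. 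This is what we exploit to place $\eta$ exactly where the equi-positive estimate \eqref{eq-main-1} applies.

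I would construct the $p_k$ and the lifts $\widetilde{\Gamma}_{p_k}$ by induction on $k$. Assume $\Lambda_{q_{k-1}}$ has been fixed and set $R_{k-1}=\max_{\lambda'\in\Lambda_{q_{k-1}}}|\lambda'|<\infty$. Choose $p_k$ so large that $N^{-q_k}(1+R_{k-1})<\delta$. For each base residue $\gamma\in\Gamma_{p_k}$, taken in $\{0,\dots,N^{p_k}-1\}$, put $\zeta_\gamma=\gamma/N^{p_k}\in[0,1)$, let $k_{\zeta_\gamma}\in\Z$ be the integer furnished by the average equi-positive condition at the base point $\zeta_\gamma$, and define the corresponding element of $\widetilde{\Gamma}_{p_k}$ to be $\widetilde{\gamma}=\gamma+N^{p_k}k_{\zeta_\gamma}$. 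Crucially this shift depends only on $\gamma$ and on no $\lambda'\in\Lambda_{q_{k-1}}$, so $\widetilde{\Gamma}_{p_k}$ is a genuine, well-defined lift of $\Gamma_{p_k}$, with residues still distinct modulo $N^{p_k}$.

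With this choice the verification is a direct computation. For $\lambda=\lambda'+N^{q_{k-1}}\widetilde{\gamma}\in\Lambda_{q_k}$ and $\xi\in[0,1]$, using $q_k=q_{k-1}+p_k$ one finds
$$\eta=\frac{\xi+\lambda}{N^{q_k}}=\zeta_\gamma+y+k_{\zeta_\gamma}, \qquad y=\frac{\xi+\lambda'}{N^{q_k}},$$
where $|y|\le N^{-q_k}(1+R_{k-1})<\delta$. Invoking \eqref{eq-main-1} at $\zeta_\gamma$ with this $y$, together with the $1$-periodicity of $M_{\mathcal{B}_s}$ (valid because $\eta-(\zeta_\gamma+y)=k_{\zeta_\gamma}\in\Z$), yields exactly $|\widehat{\mu}(\eta)|^2\ge\frac{c}{n}\sum_{s}|M_{\mathcal{B}_s}(\eta)|^2$, i.e. the tail-term condition at level $q_k$. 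Since $\widehat{\mu}_{>q_k}$ and $\Lambda_{q_k}$ are fully determined once $p_1,\dots,p_k$ and their lifts are fixed, and since $\Lambda_{q_k}\subset\Lambda_{q_{k+1}}$, this estimate is never disturbed at later stages, so \eqref{tail-term} holds for all $k$. For $\gamma=0$ one has $\zeta_0=0$, and because $\widehat{\mu}(0)=1$ while $\frac{c}{n}\sum_s|M_{\mathcal{B}_s}(0)|^2=c$, shrinking $\delta$ and taking $c<1$ if necessary shows the trivial lift $k_0=0$ is admissible; this keeps $0\in\widetilde{\Gamma}_{p_k}$, preserves both the nesting $\Lambda_{q_k}\subset\Lambda_{q_{k+1}}$ and $0\in\Lambda$, and guarantees $\Lambda\subset\Z$. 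Proposition \ref{lem-main}(ii) then concludes the proof.

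I expect the main obstacle to be precisely the decoupling carried out in the inductive step: a priori both the equi-positive base point and the integer shift needed for \eqref{tail-term} depend on the entire argument $\eta$, hence on the lower-level part $\lambda'$, yet a single lift set $\widetilde{\Gamma}_{p_k}$ must simultaneously serve every $\lambda'\in\Lambda_{q_{k-1}}$. The resolution, and the crux of the argument, is the separation of scales: by taking $p_k$ sufficiently large the whole contribution of $\lambda'$ and of $\xi$ is absorbed into the slack $|y|<\delta$, so that the base point $\zeta_\gamma$ at which equi-positivity is applied depends only on the top digit $\gamma$, making the lift well-defined and the construction consistent.
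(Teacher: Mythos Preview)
Your proposal is correct and follows essentially the same approach as the paper: both construct $\Lambda$ inductively by choosing $p_k$ large enough that the contribution of $\xi+\lambda'$ from the previous stage is absorbed into the slack $|y|<\delta$, then lift each $\gamma\in\Gamma_{p_k}$ by the integer $k_{\zeta_\gamma}$ supplied by the average equi-positive condition at the base point $\zeta_\gamma=\gamma/N^{p_k}$, and verify the tail-term condition by the same direct computation before invoking Proposition~\ref{lem-main}(ii). Your treatment of the case $\gamma=0$ (ensuring $0\in\widetilde{\Gamma}_{p_k}$ so that the nesting $\Lambda_{q_k}\subset\Lambda_{q_{k+1}}$ holds) is in fact more explicit than the paper's, which silently relies on the fact that one may take $k_0=0$.
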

\begin{proof} Since   $(N, {\mathcal D}_0, {\mathcal L})$ forms a Hadamard triple, the elements of ${\mathcal L}$ are in distinct residue modulo $N$. So, without loss of generality, we can assume $0\in{\mathcal L}\subset\{0, 1, \cdots, N-1\}.$
We now construct inductively $\Lambda_{q_k}$ so that  
\begin{equation}\label{eq-main}
  |\widehat{\mu}_{>q_k} (\xi+\lambda_k)|^2=\left|\widehat{\mu} \left(\frac{\xi+\lambda_k}{N^{q_k}}\right)\right|^2 \ge \frac{c}{n} \sum_{i=0}^{n-1} \left|M_{\frac{\B_i}{N^{q_k}}}(\xi+\lambda_k)\right|^2,\quad \forall\ \xi\in[0,1],\  \lambda_k\in\Lambda_{q_k},  
\end{equation}
    then the result follows from Lemma \ref{lem-main}. First, we take $\Lambda_0=\{0\}$. Suppose that $\Lambda_{q_{k-1}}$ has been constructed which satisfies the inequality \eqref{eq-main}.  We can take a large enough $p_k$ in the subsequence so that 
    \begin{equation}\label{eq-main-2}
    \sup\left\{|N^{-q_k}(\xi+\lambda_{k-1})|: \xi\in[0, 1],\lambda_{k-1}\in\Lambda_{q_{k-1}}\right\}<\delta.
    \end{equation}
We now define 
$$\widetilde{\Gamma}_{p_{k}}=\{\gamma_k+N^{p_k}k_{\xi_{\gamma_k}}: \gamma_k\in\Gamma_{p_k}\} \text{ and } \Lambda_{q_k}=\Lambda_{q_{k-1}}+N^{q_{k-1}}\widetilde{\Gamma}_{p_{k}},$$
where $\xi_{\gamma_k}=\gamma_k/N^{p_k}\in[0, 1]$ so that $k_{\xi_{\gamma_k}}\in\Z$ is defined as in \eqref{eq-main-1}. Now, writing 
$$\lambda_k=\lambda_{k-1}+N^{q_{k-1}}\gamma_k+N^{q_{k}}k_{\xi_{\gamma_k}},$$
for some $\lambda_{k-1}\in\Lambda_{q_{k-1}}$, we have
\begin{eqnarray*}
 |\widehat{\mu}_{>q_k} (\xi+\lambda_k)|^2&=&\left|\widehat{\mu} \left(\frac{\xi+\lambda_k}{N^{q_k}}\right)\right|^2\\
 &=&\left|\widehat{\mu} \left(\frac{\xi+\lambda_{k-1}+N^{q_{k-1}}\gamma_k}{N^{q_k}}+k_{\xi_{\gamma_k}}\right)\right|^2\quad (\text{using \eqref{eq-main-2}})\\
 &\geq& \frac{c}{n}\sum_{i=0}^{n-1}\left|M_{B_i}(\frac{\xi+\lambda_{k-1}+N^{q_{k-1}}\gamma_k}{N^{q_k}})\right|^2\\
 &=&\frac{c}{n}\sum_{i=0}^{n-1}\left|M_{\frac{B_i}{N^{q_k}}}(\xi+\lambda_{k})\right|^2.
\end{eqnarray*}
We have completed the proof.
\end{proof}

\bigskip

\bigskip

\section{Proof of Theorem \ref{thm_main1}}

In this section, we will prove that the average equi-positive condition holds for our product-form Hadamard triple. Then Theorem \ref{th-main} will imply Theorem \ref{thm_main1}. Recall that 
$$|\widehat{\mu}(\xi)|=\left|M_{\frac{\D}{N}}(\xi)\right|\cdot\left|\widehat{\mu}(\frac{\xi}{N})\right|.$$
We will study the Fourier transform in the two parts. In the first part,
$$
M_{\frac{\D}{N}}(\xi) =\frac1n \sum_{i=0}^{n-1} e\left(-\frac{a_i}{N}\xi\right) M_{\B_i}(\xi). 
$$
Unfortunately, a common zero for all $\B_s$ will create an obstacle in obtaining a slight perturbation $|y|<\delta$ in the definition. To overcome the situation, we need to introduce the following factorization procedure for the common zeros. Let $\B$ be a finite subset of non-negative integers, we define 
$$
P_{\B}(z) = \sum_{b\in\B}z^b
$$
which belongs to $\Z[x]$, the ring of  polynomials with integer coefficients.
Then 
$$
M_{\B}(\xi) = \frac{1}{\#B} P_{\B}(e(-\xi)). 
$$
Note that if $e(-\theta) = e^{-2\pi i \theta}$ is a zero for $P_{\B}$, then $e(-\theta)$ is an algebraic number and irreducible polynomial $F_{\theta}(z)\in\Z[x]$ for $e(-\theta)$ is the smallest degree polynomial having $e(-\theta)$ as a root. We know that it divides $P_{\B}$. We let 
$$
Z = \{e(-\theta): P_{\B_s}(e(-\theta)) = 0, \forall s = 0, 1, ..., n-1\}.  
$$
Note that $Z$ is a finite set and for each $e(-\theta)$ and $s = 0,1..,n-1$, let $k_{\theta, s} = \max\{k: F_{\theta}^{k}(x) \ \mbox{divides} \ P_{\B_s}(x)\}$. Then 
$$
k_{\theta} = \min \{k_{\theta,s}: s= 0,1..,n-1\}. 
$$
From this construction, the polynomial 
$$
F(x) = \prod_{e(-\theta)\in Z} F_{\theta}^{k_{\theta}}(x)
$$ 
divides all $P_{s}(x)$, so for all $s = 0,...,n-1$, 
$$
P_{\B_s}(x) = F(x)Q_s(x)
$$
for some $Q_s(x)\in\Z[x]$. As common zeros are now all factorized, the function 
\begin{equation}\label{eq_QQ}
{\bf Q}(\xi) := \frac1{n} \sum_{s=0}^{n-1}\left|Q_s(e(-\xi))\right|^2 >0.     
\end{equation}
We now consider the following derived function
$$
\widehat{M_{\frac{\D}{N}}}(\xi) = \frac{1}{n}\sum_{s=0}^{n-1} e\left(-\frac{a_s\xi}{N}\right) Q_s(e(-\xi))
$$
Notice that 
\begin{equation}\label{eq_factorize}
    M_{\frac{\D}{N}}(\xi) = F(e(-\xi))\cdot \widehat{M_{\frac{\D}{N}}}(\xi) \ \mbox{and} \ M_{\B_s}(\xi) = F(e(-\xi))\cdot Q_s(e(-\xi)). 
\end{equation}
Moreover, as the polynomial $F(x)$ and $Q(x)$ are in $\Z[x]$, we have 
\begin{equation}\label{eq_factorize2}
   F(e(-\xi-\ell))= F(e(-\xi)),\quad  Q_s(e(-\xi-\ell)) = Q_s(e(-\xi)) 
\end{equation}
for all $\ell\in\Z$ and $s = 0,1,...,n-1$. Note that all $\B_s$ have the same cardinality since $(N,\B_s)$ are equivalent compatible pairs, we will denote it by $m = \#\B_s$.  

\begin{Lem}\label{lem-positive}
There exists $c>0$ and $\delta>0$ such that for all $\xi\in[0, 1]$, there is a $\ell_{\xi}\in{\mathcal L}$ such that 
and 
\begin{equation}\label{eq_lemma4.1-1}
    \left|M_{\frac{\D}{N}}(\xi+y+\ell_{\xi})\right|^2\geq \frac{1}{2mn^2}\sum_{s=0}^{n-1}|M_{B_s}(\xi+y)|^2
\end{equation}
and \begin{equation}\label{eq_lemma4.1-2}\frac1n\sum_{s=0}^{n-1}\left|M_{\frac{B_{s}}{N}}({\xi+y+\ell_{\xi}})\right|^2\geq \frac{1}{2m^2n}.
\end{equation}
whenever $|y|<\delta$. 
In particularly, we can take $\ell_{\xi}=0$ if $\xi=0$.
\end{Lem}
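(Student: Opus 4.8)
The plan is to reduce the two inequalities to a single averaged identity and then read off both lower bounds at once by a product-pigeonhole, with the common-zero factorization supplying the uniform positive floor needed to pass from $y=0$ to $|y|<\delta$. Throughout write $W(\xi)=\frac1n\sum_{s=0}^{n-1}\left|M_{\frac{\mathcal B_s}{N}}(\xi)\right|^2$ and $U_\ell(\xi)=\left|\widehat{M_{\frac{\mathcal D}{N}}}(\xi+\ell)\right|^2$, and recall the factorizations $M_{\frac{\mathcal D}{N}}(\xi)=F(e(-\xi))\,\widehat{M_{\frac{\mathcal D}{N}}}(\xi)$ and $\sum_{s}|M_{\mathcal B_s}(\xi)|^2=|F(e(-\xi))|^2\,n\,{\bf Q}(\xi)$, where $F(e(-\cdot))$ and the $Q_s(e(-\cdot))$ are all $1$-periodic. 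Since $F(e(-\xi-y-\ell_\xi))=F(e(-\xi-y))$ for $\ell_\xi\in\mathbb Z$, the factor $|F(e(-\xi-y))|^2$ cancels on the two sides of \eqref{eq_lemma4.1-1} (the locus $F(e(-\xi-y))=0$ making it the trivial $0\ge 0$), so \eqref{eq_lemma4.1-1} is equivalent to $U_{\ell_\xi}(\xi+y)\ge\frac{1}{2mn}{\bf Q}(\xi+y)$, while \eqref{eq_lemma4.1-2} is literally $W(\xi+y+\ell_\xi)\ge\frac{1}{2m^2n}$. Thus it suffices to find, for each $\xi$, one $\ell_\xi\in\mathcal L$ at which $U_{\ell_\xi}$ is comparable to ${\bf Q}$ and $W(\cdot+\ell_\xi)$ is bounded below by a constant.

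The key identity is obtained by averaging Lemma \ref{equi-distibution lemma} (taken at $p=1$) over $s$: after cancelling $|F(e(-\xi))|^2$ it reads
$$
\sum_{\ell\in\mathcal L} U_\ell(\xi)\,W(\xi+\ell)={\bf Q}(\xi)
$$
(the same identity comes directly from Lemma \ref{lemma3.0} with the $1$-periodic choice $Q_s(\xi):=Q_s(e(-\xi))$, for then ${\bf M}=\widehat{M_{\frac{\mathcal D}{N}}}$, averaged over $s$). Because $\#\mathcal L=\#\mathcal L_1\cdot\#\mathcal L_2=nm$, pigeonholing over $\mathcal L$ yields $\ell_\xi\in\mathcal L$ with $U_{\ell_\xi}(\xi)\,W(\xi+\ell_\xi)\ge \frac{{\bf Q}(\xi)}{nm}$.

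Now the two factors are separately bounded above: Cauchy–Schwarz gives $U_\ell(\xi)=\left|\frac1n\sum_s e\!\left(-\frac{a_s(\xi+\ell)}{N}\right)Q_s(e(-\xi))\right|^2\le \frac1n\sum_s|Q_s(e(-\xi))|^2={\bf Q}(\xi)$, and $W\le 1$ since each $|M_{\frac{\mathcal B_s}{N}}|\le 1$. Dividing the product bound by each of these ceilings forces both factors to be large: $W(\xi+\ell_\xi)\ge\frac{{\bf Q}(\xi)/(nm)}{{\bf Q}(\xi)}=\frac{1}{nm}$ and $U_{\ell_\xi}(\xi)\ge\frac{{\bf Q}(\xi)/(nm)}{1}=\frac{{\bf Q}(\xi)}{nm}$. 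These exceed the required thresholds $\frac{1}{2mn}{\bf Q}(\xi)$ and $\frac{1}{2m^2n}$ by factors $2$ and $2m$ respectively — precisely the slack that the ``$\tfrac12$'' in the statement reserves for the perturbation $y$. At $\xi=0$ the product attains its maximum ${\bf Q}(0)$ at $\ell=0$: indeed $W(0)=1$, and since $1$ is a root of no $P_{\mathcal B_s}$ the values $Q_s(1)=m/F(1)$ coincide, so $U_0(0)=|\widehat{M_{\frac{\mathcal D}{N}}}(0)|^2={\bf Q}(0)$; this gives the final ``$\ell_\xi=0$'' claim.

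The genuine work, and the step I expect to be the main obstacle, is making the choice uniform in $\xi$ so as to reach the $|y|<\delta$ statement. Here the factorization into common zeros is indispensable: by construction the $Q_s$ have no common zero on the unit circle, whence ${\bf Q}$ is a strictly positive, continuous, $1$-periodic function with $c_0:=\min_{\mathbb R}{\bf Q}>0$. As $\mathcal L$ is finite and each of the $1$-periodic functions $U_\ell$, $W(\cdot+\ell)$, ${\bf Q}$ is uniformly continuous, I would choose $\delta$ (uniformly over $\xi\in[0,1]$, via a finite subcover together with the floor $c_0$) so small that each of these functions varies by at most, say, $\frac{c_0}{4nm}$ on intervals of length $\delta$; the factor-$2$ margins secured at $y=0$ then survive for all $|y|<\delta$, yielding \eqref{eq_lemma4.1-1} and \eqref{eq_lemma4.1-2} with $c=\tfrac12$. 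The one conceptual subtlety to watch is exactly this: without the positive floor $c_0$ provided by removing the common zeros of the $M_{\mathcal B_s}$, no neighbourhood-uniform bound could survive, since near a common zero both $U_\ell$ and ${\bf Q}$ collapse; the factorization is what decouples the (harmless) shared factor $F$ from the (strictly positive) residual ${\bf Q}$.
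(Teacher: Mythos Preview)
Your argument is correct and tracks the paper's proof closely: the same identity $\sum_{\ell\in\mathcal L} U_\ell(\xi)\,W(\xi+\ell)={\bf Q}(\xi)$, the same pigeonhole over $\mathcal L$, the same common-zero factorization, and the same equicontinuity pass to $|y|<\delta$.

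The one genuine difference is how you bound $U_{\ell_\xi}$ from above to extract the lower bound on $W$. You use Cauchy--Schwarz, $U_\ell\le {\bf Q}$, which gives $W(\xi+\ell_\xi)\ge 1/(nm)$. The paper instead computes a second identity, $\sum_{\ell\in\mathcal L}U_\ell(\xi)=m\,{\bf Q}(\xi)$, obtained by summing first over $\ell_1\in\mathcal L_1$ and using that $\mathcal L_1$ is a spectrum for $\mathcal A$; this yields only $U_{\ell_\xi}\le m{\bf Q}$ and hence the weaker $W(\xi+\ell_\xi)\ge 1/(m^2n)$, exactly the constant appearing in the statement. Your route is shorter and gives a sharper constant, and it avoids invoking the $\mathcal L_1$--spectrum structure separately. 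On the perturbation step the paper is slightly slicker: it works with the ratios ${\bf T}_\ell=U_\ell/{\bf Q}$, which are continuous because ${\bf Q}>0$, so the equicontinuity choice of $\delta$ delivers $U_{\ell_\xi}(\xi+y)\ge \tfrac{1}{2mn}{\bf Q}(\xi+y)$ directly; your additive approach with the floor $c_0=\min{\bf Q}$ works too, but the specific oscillation bound $c_0/(4nm)$ you wrote would need to be tightened (or replaced by the ratio trick) to cover \eqref{eq_lemma4.1-2} uniformly. No finite subcover is needed, since all the functions involved are $1$-periodic and continuous, hence already uniformly continuous.
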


\begin{proof} Let $\xi\in[0,1]$. Applying Lemma \ref{lemma3.0}, we have
\begin{eqnarray*}
&&\sum_{\ell\in{\mathcal L}}\left|\widehat{M_{\frac{\D}{N}}}(\xi+\ell)\right|^2\left(\frac1n\sum_{s=0}^{n-1}\left|M_{\frac{B_s}{N}}(\xi+\ell)\right|^2\right)\nonumber\\
&=&\frac1n\sum_{s=0}^{n-1}\sum_{\ell\in{\mathcal L}}\left|\widehat{M_{\frac{\D}{N}}}(\xi+\ell)\right|^2\left|M_{\frac{B_s}{N}}(\xi+\ell)\right|^2\nonumber\\
&=&\frac{1}{n}\sum_{s=0}^{n-1}|Q_s(e(-\xi))|^2.
\end{eqnarray*}
Hence,  there is a $\ell_{\xi}\in{\mathcal L}$ such that  
\begin{eqnarray}\label{eq-positive1}
\left|\widehat{M_{\frac{\D}{N}}}(\xi+\ell_{\xi})\right|^2&\geq&\left|\widehat{M_{\frac{\D}{N}}}(\xi+\ell_{\xi})\right|^2\left(\frac1n\sum_{s=0}^{n-1}\left|M_{\frac{B_s}{N}}(\xi+\ell_{\xi})\right|^2\right)\nonumber\\
&\geq&\frac{1}{mn^2}\sum_{s=0}^{n-1}\left|Q_s(e(-\xi))\right|^2:= \frac{{\bf Q}(\xi)}{mn}>0. 
\end{eqnarray} 
The strict positivity follows from (\ref{eq_QQ}). Now, as ${\bf Q}>0$,  the function for $\ell\in{\mathcal L}$
$$
{\bf T}_{\ell}(x): = \frac{\left|\widehat{M_{\frac{\D}{N}}}(x+\ell)\right|^2}{{\bf Q}(x)}
$$
is a continuous function on $[0,1]$ and hence is uniformly continuous. Moreover, as there are only finitely many such functions, $\{{\bf T}_{\ell}: \ell\in{\mathcal L}\}$ are equi-continuous on $[0,1]$. Hence,  we can find $\delta>0$ (independent of $\ell\in{\mathcal L}$) such that for all $|x-y|<\delta$.
$$
|{\bf T}_{\ell}(x)-{\bf T}_{\ell}(y)| < \frac{1}{2mn}. 
$$
As ${\bf T}_{\ell_{\xi}} (\xi) \ge \frac{1}{mn}$ by (\ref{eq-positive1}),  we have if $|y|<\delta$,
$$
{\bf T}_{\ell_{\xi}} (\xi+y) \ge \frac{1}{2mn}.
$$
Hence, 
$$
\left|\widehat{M_{\frac{\D}{N}}}(\xi+y+\ell_{\xi})\right|^2\ge \frac{1}{2mn^2}\sum_{s=0}^{n-1}|Q_s(e(-\xi-y))|^2.
$$
To finish the proof of (\ref{eq_lemma4.1-1}), we just need to multiply $|F(e(-\xi-y))|^2$ on both sides of the inequality and use (\ref{eq_factorize}) and (\ref{eq_factorize2}). This completes the proof of (\ref{eq_lemma4.1-1}).

\medskip

We now establish (\ref{eq_factorize2}). Since ${\mathcal L}_1$ is a spectrum of ${\mathcal A}$, we have
\begin{eqnarray*}
\sum_{\ell\in{\mathcal L}}\left|\widehat{M_{\frac{\D}{N}}}(\xi+\ell)\right|^2
&=&\sum_{\ell_2\in{\mathcal L}_2}\sum_{\ell_1\in{\mathcal L}_1}\left|\widehat{M_{\frac{\D}{N}}}(\xi+\ell_1+\ell_2)\right|^2\\
&=&\frac{1}{n^2}\sum_{\ell_2\in{\mathcal L}_2}\sum_{\ell_1\in{\mathcal L}_1}\sum_{s_1, s_2=0}^{n-1}e\left(-\frac{(a_{s_1}-a_{s_2})(\xi++\ell_2+\ell_1)}{N}\right)Q_{s_1}(e(-\xi))\overline{Q_{s_2}(e(-\xi))}\\
&=&\frac{1}{n^2}\sum_{\ell_2\in{\mathcal L}_2}\sum_{s_1, s_2=0}^{n-1}e\left(-\frac{(a_{s_1}-a_{s_2})(\xi+\ell_2)}{N}\right)Q_{s_1}(\xi)\overline{Q_{s_2}(\xi)}\sum_{\ell_1\in{\mathcal L}_1}e\left(-\frac{(a_{s_1}-a_{s_2})\ell_1}{N}\right)\\
&=&\frac{m}{n}\sum_{s=0}^{n-1}\left|Q_s(e(-\xi))\right|^2\\
\end{eqnarray*}
Hence,
$$\frac{m}{n}\sum_{s=0}^{n-1}\left|Q_s(e(-\xi))\right|^2\geq\left|\widehat{M_{\frac{\D}{N}}}(\xi+\ell_{\xi})\right|^2,$$
which together with \eqref{eq-positive1} implies that 
\begin{eqnarray*}
\left|\widehat{M_{\frac{\D}{N}}}(\xi+\ell_{\xi})\right|^2\left(\frac1n\sum_{s=0}^{n-1}\left|M_{\frac{B_s}{N}}(\xi+\ell_{\xi})\right|^2\right)
&\geq&\frac{1}{mn^2}\sum_{s=0}^{n-1}|Q_s(e(-\xi))|^2\\
&\geq& \frac{1}{m^2n}\left|\widehat{M_{\frac{\D}{N}}}(\xi+\ell_{\xi})\right|^2.
\end{eqnarray*}
From (\ref{eq-positive1}), $\left|\widehat{M_{\frac{\D}{N}}}(\xi+\ell_{\xi})\right|^2>0$, we thus obtain 
$$
\frac1n\sum_{s=0}^{n-1}\left|M_{\frac{B_s}{N}}(\xi+\ell_{\xi})\right|^2 \ge\frac{1}{m^2n}.
$$
Now, the functions $\{{\bf S}_{\ell}(x) = \frac1n\sum_{s=0}^{n-1}\left|M_{\frac{B_s}{N}}(x+\ell)\right|^2:\ell\in {\mathcal L}\}$ is equicontinuous on $[0,1]$, the same argument established (\ref{eq_factorize2}).

\end{proof}
\bigskip

\subsection{Weakly periodic set.} Let  ${\mathcal O}=\left\{\xi\in[0, 1]: \frac1n\sum_{s=0}^{n-1}|M_{{\mathcal B}_s}(\xi)|^2>0\right\}$. We define the {\it weakly periodic set} of $\mu$ to be 
$${\mathcal W}(\widehat{\mu})=\{\xi\in{\mathcal O}: \widehat{\mu}(\xi+k)=0,\ k\in\Z\}.$$
The following corollary is immediate. 

\bigskip

\begin{corollary}\label{coro-positive}
For any  $\xi\in {\mathcal O}$, there is a $\ell_{\xi}\in{\mathcal L}$ such that 
$$\left|M_{\frac{\D}{N}}(\xi+\ell_{\xi})\right|^2\left(\frac1n\sum_{s=0}^{n-1}\left|M_{\frac{B_s}{N}}(\xi+\ell_{\xi})\right|^2\right)>0$$
\end{corollary}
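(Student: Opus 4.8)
The plan is to deduce the corollary directly from Lemma \ref{lem-positive} by specializing the perturbation parameter to $y = 0$. Since $0$ trivially satisfies $|y| < \delta$, applying the lemma to any $\xi \in [0,1]$ produces an $\ell_{\xi} \in {\mathcal L}$ for which both conclusions (\ref{eq_lemma4.1-1}) and (\ref{eq_lemma4.1-2}) hold at $y = 0$; that is,
\[
\left|M_{\frac{\D}{N}}(\xi+\ell_{\xi})\right|^2 \ge \frac{1}{2mn^2}\sum_{s=0}^{n-1}|M_{B_s}(\xi)|^2
\quad\text{and}\quad
\frac1n\sum_{s=0}^{n-1}\left|M_{\frac{B_s}{N}}(\xi+\ell_{\xi})\right|^2 \ge \frac{1}{2m^2n}.
\]
The very same $\ell_{\xi}$ furnished by the lemma will serve as the $\ell_{\xi}$ demanded by the corollary, so no new choice needs to be made.

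The only remaining task is to verify that each of the two factors in the product is strictly positive. The second factor is immediate: the bound (\ref{eq_lemma4.1-2}) gives $\frac1n\sum_{s=0}^{n-1}\left|M_{\frac{B_s}{N}}(\xi+\ell_{\xi})\right|^2 \ge \frac{1}{2m^2n} > 0$ with no hypothesis on $\xi$ at all. For the first factor I would invoke the defining property of ${\mathcal O}$: by definition $\xi \in {\mathcal O}$ means $\frac1n\sum_{s=0}^{n-1}|M_{{\mathcal B}_s}(\xi)|^2 > 0$, equivalently $\sum_{s=0}^{n-1}|M_{B_s}(\xi)|^2 > 0$, so the right-hand side of (\ref{eq_lemma4.1-1}) is strictly positive and hence $\left|M_{\frac{\D}{N}}(\xi+\ell_{\xi})\right|^2 > 0$. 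Multiplying the two strictly positive quantities then yields the claimed strict inequality.

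I do not expect a genuine obstacle here, since the substance of the argument has already been absorbed into Lemma \ref{lem-positive}, whose proof handles the delicate factorization of the common zeros of the $P_{{\mathcal B}_s}$ through the polynomial $F$ and the resulting uniform positivity of ${\bf Q}$ in (\ref{eq_QQ}). The corollary is simply the unperturbed ($y=0$) shadow of that lemma, restricted to the set ${\mathcal O}$ on which the averaged mask does not vanish. The one point worth stating carefully is the equivalence between $\xi \in {\mathcal O}$ and the positivity of $\sum_{s=0}^{n-1}|M_{B_s}(\xi)|^2$, as this is exactly what upgrades the lower bound (\ref{eq_lemma4.1-1}) from a bound by a nonnegative quantity into a strict positivity statement.
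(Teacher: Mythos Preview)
Your proposal is correct and follows exactly the approach the paper intends: the paper's proof is the one-line ``It follows directly from Lemma \ref{lem-positive},'' and you have simply spelled out the details of that deduction by setting $y=0$ and using the definition of ${\mathcal O}$ to turn (\ref{eq_lemma4.1-1}) into a strict inequality.
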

\begin{proof}
It follows directly from Lemma \ref{lem-positive}.
\end{proof}

\begin{Lem}\label{W-empty}
With the above notations, for the product-form Hadamard triple in Definition \ref{definition_product-form} with $r = 1$,  the associated self-similar measure $\mu=\mu_{N,\D}$ satisfies  ${\mathcal W}(\widehat{\mu})=\emptyset$.
\end{Lem}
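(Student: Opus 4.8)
The plan is to argue by contradiction, recasting the statement dynamically: ${\mathcal W}(\widehat{\mu})=\emptyset$ is equivalent to the assertion that every $\xi\in{\mathcal O}$ admits an integer $k$ with $\widehat{\mu}(\xi+k)\neq0$. So suppose some $\xi_0\in{\mathcal O}$ satisfied $\widehat{\mu}(\xi_0+k)=0$ for all $k\in\Z$, i.e. $\xi_0\in{\mathcal W}(\widehat{\mu})$, and work toward a contradiction. Throughout I use the exact identity $\widehat{\mu}(\zeta)=M_{\frac{\D}{N}}(\zeta)\,\widehat{\mu}(\zeta/N)$ and the $N$-periodicity of $M_{\frac{\D}{N}}$ (valid because $\D\subset\Z$), together with the reductions $0\in{\mathcal L}\subset\{0,\dots,N-1\}$ and $\gcd(\D_0)=1$.

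First I would show that ${\mathcal W}(\widehat{\mu})$ is forward invariant under the inverse branches $\xi\mapsto(\xi+\ell)/N$. Given $\xi\in{\mathcal W}(\widehat{\mu})$, Corollary \ref{coro-positive} supplies $\ell_\xi\in{\mathcal L}$ with $M_{\frac{\D}{N}}(\xi+\ell_\xi)\neq0$ and $\xi':=(\xi+\ell_\xi)/N\in{\mathcal O}$. Then for every $k'\in\Z$,
\[ 0=\widehat{\mu}(\xi+\ell_\xi+Nk')=M_{\frac{\D}{N}}(\xi+\ell_\xi)\,\widehat{\mu}(\xi'+k'), \]
where the left side vanishes since $\ell_\xi+Nk'\in\Z$. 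As $M_{\frac{\D}{N}}(\xi+\ell_\xi)\neq0$ this forces $\widehat{\mu}(\xi'+k')=0$ for all $k'$, so $\xi'\in{\mathcal W}(\widehat{\mu})$. Iterating produces an orbit $\{\xi_p\}\subset{\mathcal W}(\widehat{\mu})\subset[0,1)$.

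Next I would prove that ${\mathcal W}(\widehat{\mu})$ is \emph{finite}. Since $\mu$ is compactly supported, $\widehat{\mu}$ extends to an entire function of exponential type that is bounded by $1$ on $\R$ and not identically zero (as $\widehat{\mu}(0)=1$), so its real zero set has finite upper density. Each point of ${\mathcal W}(\widehat{\mu})$ contributes an entire coset $\xi+\Z$ of zeros, and distinct points of ${\mathcal W}(\widehat{\mu})\subset[0,1)$ give disjoint cosets; hence only finitely many can occur. Consequently the orbit $\{\xi_p\}$ is eventually periodic, entering a cycle $\eta_0\to\cdots\to\eta_{T-1}\to\eta_0$ with $\eta_{t+1}=(\eta_t+m_t)/N$ and $m_t\in{\mathcal L}$. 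Solving the recursion shows each $\eta_t$ is a rational in $(0,1)$ whose reduced denominator $q$ divides $N^T-1$; in particular $q\geq2$ (because $0\notin{\mathcal W}(\widehat{\mu})$ forces $\eta_t\neq0$) and $\gcd(q,N)=1$.

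The main obstacle is the final step: contradicting $\widehat{\mu}(\eta_0)=0$ for such a cycle point $\eta_0=A/q\in{\mathcal O}$. I would unwind
\[ \widehat{\mu}(\eta_0+k)=\prod_{j\geq1}\frac{1}{\#\D}\,P_{\D}\!\left(e\!\left(-\tfrac{\eta_0+k}{N^{j}}\right)\right), \]
and exploit that $P_{\D}\in\Z[x]$ has only finitely many cyclotomic factors, say of orders in a finite set $S$. Since $\gcd(q,N)=1$, I may choose $k$ with $\gcd(A+kq,\,N)=1$, so that $e(-(A+kq)/(qN^{j}))$ has order exactly $qN^{j}$; these orders are distinct and tend to infinity, hence all but finitely many levels $j$ yield nonzero factors. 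The delicate heart of the argument is to rule out the finitely many low-level factors, i.e. to show $qN^{j}\notin S$ for all $j\ge1$: this is precisely where the two hitherto unused hypotheses must enter, namely $\gcd(\D_0)=1$ and the common-zero factorization $P_{\B_s}=F\,Q_s$ with $\frac1n\sum_{s=0}^{n-1}|Q_s(e(-\xi))|^2>0$ (so the $Q_s$ share no zero, while $F(e(-\eta_0))\neq0$ because $\eta_0\in{\mathcal O}$). Pushing this arithmetic through — showing the coset of the periodic point cannot be absorbed into the finitely many cyclotomic factors of $P_{\D}$ — produces an integer $k$ with $\widehat{\mu}(\eta_0+k)\neq0$, contradicting $\eta_0\in{\mathcal W}(\widehat{\mu})$ and hence forcing ${\mathcal W}(\widehat{\mu})=\emptyset$.
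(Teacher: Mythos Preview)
Your setup through eventual periodicity is fine and closely parallels the paper's: forward invariance of ${\mathcal W}(\widehat{\mu})$ under $\xi\mapsto(\xi+\ell)/N$ via Corollary~\ref{coro-positive}, and finiteness of ${\mathcal W}(\widehat{\mu})$ (the paper simply observes $\widehat{\mu}$ is analytic and nonconstant on $[0,1]$, but your density argument is equivalent). The genuine gap is your final step. You reduce to a periodic point $\eta_0=A/q$ with $\gcd(q,N)=1$ and then try to exhibit an explicit $k$ with $\widehat{\mu}(\eta_0+k)\neq0$ by controlling cyclotomic factors of $P_{\D}$. But once you fix $k$, the point $e(-(A+kq)/(qN^{j}))$ is a primitive $qN^{j}$-th root of unity for \emph{every} $j\ge1$, and you need $P_{\D}$ to be nonzero at \emph{all} of these simultaneously. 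Even if $\Phi_{qN^{j}}\nmid P_{\D}$, $P_{\D}$ may still vanish at the particular primitive root you have landed on; and nothing in the hypotheses $\gcd(\D_0)=1$ or the factorization $P_{\B_s}=F\,Q_s$ gives you control over which individual roots of unity are zeros of $P_{\D}(x)=\sum_s x^{a_s}P_{\B_s}(x^N)$. You correctly sense that this is ``the delicate heart'', but no mechanism is offered, and I do not see one along these lines.

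The paper avoids this arithmetic entirely by extracting a stronger consequence of finiteness than periodicity: since the branch map $(\xi,\ell)\mapsto(\xi+\ell)/N$ is injective on $[0,1)\times{\mathcal L}$, stabilization of $|Y_k|$ forces each $\xi_k$ (for $k$ large) to have \emph{exactly one} admissible offspring $\ell_{\xi_k}\in{\mathcal L}$. Then the identity from Lemma~\ref{lemma3.0},
\[
\frac1n\sum_{i}|M_{\B_i}(\xi_k)|^2
=\sum_{\ell\in{\mathcal L}}\Bigl|M_{\frac{\D}{N}}(\xi_k+\ell)\Bigr|^2\cdot\frac1n\sum_s\Bigl|M_{\frac{\B_s}{N}}(\xi_k+\ell)\Bigr|^2,
\]
collapses to a single $\ell=\ell_{\xi_k}$ term. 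Combining this with the Cauchy--Schwarz bound for that one term forces equality in Cauchy--Schwarz, hence $M_{\frac{\B_s}{N}}(\xi_k+\ell_{\xi_k})=1$ for every $s$. Using $0\in\B_s$ this gives $b\xi_k\in\Z$ for all $b\in\bigcup_s\B_s$, so $M_{\B_s}(\xi_k)=1$, and then $M_{\frac{\D_0}{N}}(\xi_k+\ell_{\xi_k})=1$; finally $\gcd(\D_0)=1$ yields $(\xi_k+\ell_{\xi_k})/N\in\Z$, contradicting $0\notin{\mathcal W}(\widehat{\mu})$. That equality-in-Cauchy--Schwarz step, triggered by the \emph{uniqueness} (not merely periodicity) of the offspring, is the missing idea in your argument.
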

\begin{proof}
Suppose that ${\mathcal W}(\widehat{\mu})\neq\emptyset$ and we take $\xi_0\in {\mathcal W}(\widehat{\mu})$. As $\widehat{\mu}(0)=1$, ${\mathcal W}(\widehat{\mu})\cap\Z=\emptyset$, so $\xi_0\not\in\Z$. We claim the following implication holds:

\begin{equation}\label{eq-claim}
\mbox{{\bf Claim:}} \    \left|M_{\frac{\D}{N}}(\xi_0+\ell)\right|^2\left(\frac1n\sum_{s=0}^{n-1}\left|M_{\frac{B_s}{N}}(\xi_0+\ell)\right|^2\right)>0\Longrightarrow \frac{\xi_0+\ell}{N}\in{\mathcal W}(\widehat{\mu}).
\end{equation}
Indeed, the assumption in the claim implies that $\frac{\xi_0+\ell}{N}\in {\mathcal O}$. By considering $k$ of the form $\ell+Nt$ and $t\in\Z$, we have
\begin{eqnarray*}
0=\widehat{\mu}(\xi_0+k)&=&M_{\frac{\D}{N}}(\xi_0+\ell+Nt)\widehat{\mu}\left(\frac{\xi_0+\ell+Nt}{N}\right)\\
&=&M_{\frac{\D}{N}}(\xi_0+\ell)\widehat{\mu}\left(\frac{\xi_0+\ell}{N}+t\right).
\end{eqnarray*}
As $M_{\frac{\D}{N}}(\xi_0+\ell)\neq0$, we must have $\widehat{\mu}(\frac{\xi_0+\ell}{N}+t)=0$ for all $t\in\Z$ and hence $\frac{\xi_0+\ell}{N}\in{\mathcal W}(\widehat{\mu})$. With this claim in mind, we define $Y_0=\{\xi_0\}$ and define inductively the set $Y_{k}$ by 
$$Y_k=\left\{\frac{\xi+\ell}{N}: \xi\in Y_{k-1}, \left|M_{\frac{\D}{N}}(\xi+\ell)\right|^2\left(\frac1n\sum_{s=0}^{n-1}\left|M_{\frac{B_s}{N}}(\xi+\ell)\right|^2\right)>0 \right\}.$$
By \eqref{eq-claim}, $Y_k\subset{\mathcal W}(\mu)$. From Corollary \ref{coro-positive}, we have all the sets $Y_k$ are non-empty. Also if $\xi_k\in Y_k$, then 
$$\xi_k=\frac{1}{N^k}(\xi_0+\ell_1+\cdots+N^{k-1}\ell_{k}).$$
This means for different $(\ell_1, \cdots, \ell_k)\neq(\ell_1', \cdots, \ell_k')$, the corresponding $\xi_k$ and $\xi_k'$ are different since the elements in ${\mathcal L}$ are in distinct residue modulo $N$. Therefore the cardinality of $Y_k$ is increasing. 
\smallskip

On ${\mathbb R}, \widehat{\mu}$ has only finitely many zeros in $[0, 1]$. Therefore, there exists $k_0$ such that for all $k\geq k_0$, the cardinality of $Y_{k}$ becomes a constant. This means that when $k\geq k_0$, each $\xi_k$ has only one offspring $\xi_{k+1}=\frac{1}{N}(\xi_k+\ell_{\xi_k})$, i.e. there is only one $\ell_{\xi_k}\in{\mathcal L}$ such that 
$$\left|M_{\frac{\D}{N}}(\xi_k+\ell_{\xi_k})\right|^2\left(\frac1n\sum_{s=0}^{n-1}\left|M_{\frac{\B_s}{N}}(\xi_k+\ell_{\xi_k})\right|^2\right)>0.$$
Recall that $\xi_k\in{\mathcal W}(\mu)\subset{\mathcal O}$, we have
\begin{eqnarray}\label{eq-W1}
0&<&\frac1n\sum_{s=0}^{n-1}\left|M_{\B_s}(\xi_k)\right|^2\nonumber\\
&=&\sum_{\ell\in{\mathcal L}}\left|M_{\frac{\D}{N}}(\xi_k+\ell)\right|^2\left(\frac1n\sum_{s=0}^{n-1}
\left|M_{\frac{\B_s}{N}}(\xi_k+\ell)\right|^2\right)\nonumber\\
&=&\left|M_{\frac{\D}{N}}(\xi_k+\ell_{\xi_k})\right|^2\left(\frac1n\sum_{s=0}^{n-1}\left|M_{\frac{\B_s}{N}}(\xi_k+\ell_{\xi_k})\right|^2\right)\nonumber\\
&=&\frac1n\sum_{s=0}^{n-1}\left| \frac1{n}\sum_{i=0}^{n-1} e\left(-\frac{a_{i}(\xi_k+\ell_{\xi_k})}{N}\right)M_{{\mathcal B}_s/N} (\xi_k+\ell_{\xi_k})M_{\mathcal B_{i}}(\xi_k)\right|^2\nonumber\\
&\le& \frac1n\sum_{s=0}^{n-1}\left(\frac1{n^2}\sum_{i=0}^{n-1}\left|e\left(-\frac{a_{i}(\xi_k+\ell_{\xi_k})}{N}\right)M_{{\mathcal B}_s/N} (\xi_k+\ell_{\xi_k})\right|^2\right)\left(\sum_{i=0}^{n-1}|M_{\mathcal B_{i}}(\xi_k)|^2\right)\nonumber\\
&\le&\frac{1}{n}\sum_{i=0}^{n-1}|M_{\mathcal B_{i}}(\xi_k)|^2.
\end{eqnarray}
This implies we have equality in a triangle inequality. From the second last line we have  
\begin{equation}\label{eq-W2}
1=M_{\frac{{\mathcal B}_s}{N}}(\xi_k+\ell_{\xi_k})=\frac1m\sum_{b\in{\mathcal B}_s}e\left(-\frac{b(\xi_k+\ell_{\xi_k})}{N}\right)
\end{equation}
where  $0\le s\le n-1$. Since $0\in{\mathcal B}_s$ (by our assummption after \eqref{eqD_1}), we get that $\frac{b(\xi_k+\ell_{\xi_k})}{N}\in\Z$ for all $b\in{\mathcal B}_s$ and so $b\xi_k\in\Z$. This forces
\begin{equation}\label{eq-W3}
M_{{{\mathcal B}_s}}(\xi_k)=\frac1m\sum_{b\in{\mathcal B}_s}e\left(-\xi_k{(b-b_s)}\right)=1,\quad \forall\ 0\le s\le n-1.  \end{equation}
Substituting \eqref{eq-W2} and \eqref{eq-W3} into \eqref{eq-W1}, we have
$$M_{\frac{\D_0}{N}}(\xi_k+\ell_{\xi_k})=\frac{1}{n}\sum_{i=0}^{n-1}e\left(-\frac{a_{i}(\xi_k+\ell_{\xi_k})}{N}\right)M_{\frac{{\mathcal B}_i}{N}}(\xi_k+\ell_{\xi_k})=1.$$
As $0\in\D_0$ and $\gcd(\D_0)=1$ (by our assummption after \eqref{eqD_1}), we have $\frac{\xi_k+\ell_{\xi_k}}{N}\in\Z$ and hence $\xi_k\in\Z$. This is a contradiction, since $\frac{\xi_k+\ell_{\xi_k}}{N}\in{\mathcal W}(\widehat{\mu})$ and ${\mathcal W}(\widehat{\mu})\cap\Z=\emptyset$.
\end{proof}

\bigskip
\subsection{Conclusion of the proof.} Denote by $$\widetilde{{\mathcal O}}=\left\{\xi\in[0, 1]: \frac1n\sum_{s=0}^{n-1}\left|M_{B_{s}}(\xi)\right|^2\geq \frac{1}{2m^{2}n}\right\}.$$
This is a compact set. 

\bigskip

\begin{Lem}\label{lem-positive2}
There exists $\epsilon_0>0$ such that for any $x\in\widetilde{{\mathcal O}}$, there exists an integer $k_{x}\in\Z$ such that 
$$|\widehat{\mu}(x+k_x)|^2\geq \epsilon_0>0.$$
\end{Lem}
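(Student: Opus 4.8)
The plan is to upgrade the \emph{pointwise} non-vanishing guaranteed by Lemma~\ref{W-empty} into a \emph{uniform} lower bound, using the compactness of $\widetilde{{\mathcal O}}$ together with the continuity of $\widehat{\mu}$. First I would record that $\widetilde{{\mathcal O}}\subset{\mathcal O}$: since $\frac{1}{2m^2 n}>0$, every $x\in\widetilde{{\mathcal O}}$ satisfies $\frac1n\sum_{s=0}^{n-1}|M_{{\mathcal B}_s}(x)|^2>0$, so $x\in{\mathcal O}$. Now fix $x\in\widetilde{{\mathcal O}}$. Because Lemma~\ref{W-empty} gives ${\mathcal W}(\widehat{\mu})=\emptyset$ while $x\in{\mathcal O}$, the point $x$ cannot belong to ${\mathcal W}(\widehat{\mu})$; by the very definition of the weakly periodic set this means there is an integer $k_x\in\Z$ with $\widehat{\mu}(x+k_x)\neq 0$, that is, $|\widehat{\mu}(x+k_x)|^2>0$.

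Next I would promote this to a neighborhood statement. Since $\mu$ is a compactly supported probability measure, $\widehat{\mu}$ is continuous on $\R$, hence so is $y\mapsto|\widehat{\mu}(y+k_x)|^2$. Setting $\epsilon_x:=\tfrac12|\widehat{\mu}(x+k_x)|^2>0$, continuity furnishes an open neighborhood $U_x$ of $x$ with $|\widehat{\mu}(y+k_x)|^2\ge\epsilon_x$ for all $y\in U_x$. The family $\{U_x: x\in\widetilde{{\mathcal O}}\}$ is then an open cover of the compact set $\widetilde{{\mathcal O}}$, so there is a finite subcover $U_{x_1},\dots,U_{x_M}$. I would put $\epsilon_0:=\min\{\epsilon_{x_1},\dots,\epsilon_{x_M}\}>0$: given any $x\in\widetilde{{\mathcal O}}$, it lies in some $U_{x_j}$, and taking $k_x:=k_{x_j}$ yields $|\widehat{\mu}(x+k_x)|^2\ge\epsilon_{x_j}\ge\epsilon_0$, which is the desired conclusion.

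The entire arithmetic content has already been spent in establishing Lemma~\ref{W-empty}; the only point needing care here is the passage from pointwise to uniform positivity, and the main (soft) obstacle is simply ruling out that the lower bound degenerates to $0$ as $x$ runs over $\widetilde{{\mathcal O}}$. Compactness prevents this. Equivalently, one may argue by contradiction: were no such $\epsilon_0$ to exist, one could extract $x_j\in\widetilde{{\mathcal O}}$ with $\sup_{k\in\Z}|\widehat{\mu}(x_j+k)|^2\to 0$, pass to a convergent subsequence $x_j\to x_*\in\widetilde{{\mathcal O}}$, and conclude from continuity of $\widehat{\mu}$ that $\widehat{\mu}(x_*+k)=0$ for every $k\in\Z$; this places $x_*\in{\mathcal W}(\widehat{\mu})$, contradicting Lemma~\ref{W-empty}.
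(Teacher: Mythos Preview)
Your proof is correct and follows essentially the same route as the paper: use Lemma~\ref{W-empty} to get a pointwise integer shift with $\widehat{\mu}(x+k_x)\neq 0$, promote to a neighborhood by continuity, and extract a uniform $\epsilon_0$ via a finite subcover of the compact set $\widetilde{{\mathcal O}}$. The sequential-compactness variant you sketch at the end is also valid and is just a repackaging of the same compactness argument.
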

\begin{proof}
For any $x\in \widetilde{{\mathcal O}}\subset{\mathcal O}$, from Lemma \ref{W-empty}, we can find $k_{x}\in\Z$ and $\epsilon_x>0$ such that
$$
|\widehat{\mu}(x+k_{x})|^2\ge \epsilon_x.
$$
As $\widehat{\mu}$ is continuous on ${\mathbb R}$, we can find  $\delta_x$ such that for all $|y|\le \delta_x$, we have
$$
|\widehat{\mu}(x+y+k_{x})|^2\ge \frac{\epsilon_x}{2}.
$$
As $\widetilde{{\mathcal O}} \subset \bigcup_{x\in \widetilde{{\mathcal O}}} B(x,\delta_x/2)$, by the compactness of $\widetilde{{\mathcal O}}$, we can find $x_1,...,x_N\in \widetilde{{\mathcal O}}$ such that $\widetilde{{\mathcal O}} \subset B(x_1,\delta_{x_1}/2)\cup...\cup B(x_N,\delta_{x_N}/2)$. We now take
$$
\delta_0 = \min\left\{\frac{\delta_{x_j}}{2}: j=1,...,N\right\},  \
\epsilon_0 = \min\left\{\frac{\epsilon_{x_j}}{2}: j=1,...,N\right\}.
$$
Now, $\delta_0$ and $\epsilon_0$ are positive and independent of $x\in \widetilde{{\mathcal O}}$. We claim that the stated property holds. Indeed, for any $x\in \widetilde{{\mathcal O}}$, $x\in B(x_j,\delta_{x_j}/2)$ for some $j=1,...,N$. 
Hence,
$$
|\widehat{\mu}(x+k_{x_j})|^2 = |\widehat{\mu}(x_j+(x-x_j)+k_{x_j})|^2\ge \frac{\epsilon_{x_j}}{2}\ge \epsilon_0.
$$
Therefore, we just redefine $k_{x} = k_{x_j}$ to obtain our desired conclusion. 
\end{proof}

\medskip

We are now ready to prove Theorem \ref{thm_main1}.

\medskip
\noindent{\it Proof of Theorem \ref{thm_main1}:} According to Theorem \ref{th-main}, it suffices to establish the average equi-positivity condition. i.e. there exists a constant  $c>0$ and $\delta>0$ such that for all $\xi\in [0, 1]$, there exists $k_{\xi}\in\Z$ such that 
\begin{equation}\label{eq-main-1}
|\widehat{\mu}(\xi+y+k_{\xi})|^2\geq \frac{c}{n}\sum_{s=0}^{n-1}|M_{{\mathcal B_s}}(\xi+y)|^2
\end{equation}
whenever $|y|<\delta$. 

\medskip

From Lemma \ref{lem-positive}, there exists $\delta>0$ such that for any $\xi\in[0, 1]$, there is an integer $\ell_{\xi}\in{\mathcal L}$ with the property that  $\frac{\xi+\ell_{\xi}}{N}\in \widetilde{{\mathcal O}}$ and 
\begin{equation}\label{eq-th1}
\left|M_{\frac{\D}{N}}(\xi+y+\ell_{\xi})\right|^2\geq \frac{1}{2mn^2}\sum_{s=0}^{n-1}|M_{\B_s}(\xi+y)|^2.  
\end{equation}
whenever $|y|<\delta$. We now take $\epsilon_0$ as in Lemma \ref{lem-positive2}, for  $x=\frac{\xi+\ell_{\xi}}{N}\in\widetilde{{\mathcal O}}$, there exists an integer $t_{\xi}\in\Z$ such that 
$$\left|\widehat{\mu}\left(\frac{\xi+\ell_{\xi}}{N}+t_{\xi}\right)\right|^2\geq\epsilon_0.$$
By the uniform continuity of $\widehat{\mu}$ over $\R$, we can find $\delta'>0$ such that
\begin{equation}\label{eq-th2}
\left|\widehat{\mu}\left(\frac{\xi+y+\ell_{\xi}}{N}+t_{\xi}\right)\right|^2\geq\frac{\epsilon_0}{2}
\end{equation}
whenever $|y|<\delta'$. Therefore, if $|y|$ is smaller than both $\delta$ and $\delta'$, (\ref{eq-th1}) and (\ref{eq-th2}) both holds. Hence, 
\begin{eqnarray*}
\left|\widehat{\mu}(\xi+y+\ell_{\xi}+Nt_{\xi})\right|^2&=&\left|M_{\frac{\D}{N}}(\xi+y+\ell_{\xi}+Nt_{\xi})\right|^2\left|\widehat{\mu}\left(\frac{\xi+y+\ell_{\xi}+Nt_{\xi}}{N}\right)\right|^2\\
&=&\left|M_{\frac{\D}{N}}(\xi+y+\ell_{\xi})\right|^2\left|\widehat{\mu}\left(\frac{\xi+y+\ell_{\xi}}{N}+t_{\xi}\right)\right|^2\\
&\geq&\frac{\epsilon_0}{4mn^2}\sum_{s=0}^{n-1}|M_{B_s}(\xi+y)|^2.
\end{eqnarray*}
Therefore, we just define $k_{\xi}=\ell_{\xi}+Nt_{\xi}\in\Z$ and $c=\frac{\epsilon_0}{4mn^2}$ to obtain our desired conclusion. \qquad$\Box$

\medskip

\section{$k$-stage Product-form}

This section will be devoted to understanding the higher stage product-forms we introduced in Definition \ref{def-prod-form-Had} and prove Theorem \ref{theorem_main2}. The idea of the proof will be to reduce the higher stage product-form back to the one-stage case. To begin, we can insert $\{0\}$ in missing levels and rewrite (1.2) as 
\begin{equation*}\label{eq_product-form_Hada_direct}
\left\{\begin{array}{lll}
\widetilde{\D}^{(0)}&=&\E_0\\
\ 
\widetilde{\D}^{(1)}& = &\bigcup_{d\in\widetilde{\D}^{(0)}} \left(d+ N\{0\}\right)\\
&\vdots&\\
\widetilde{\D}^{(\ell_1)}& = &\bigcup_{d_{\ell_1-1}\in\widetilde{\D}^{(\ell_1-1)}} \left(d_{\ell_1-1}+ N^{\ell_1}\widetilde{\E}_{\ell_1}(d_{\ell_1-1})\right)\\
&\vdots&\\
\widetilde{\D}^{(\ell)} &=& \bigcup_{d_{\ell-1}\in\widetilde{\D}^{(\ell-1)}} \left(d_{\ell-1}+ N^{\ell} \widetilde{{\mathcal E}}_{\ell}(d_{\ell-1})\right),
\end{array}\right.
\end{equation*}
where $\widetilde{\E}_{j}(d_{j-1})=\E_i(d_{j-1})$ if $j=\ell_1+...+\ell_i$, otherwise $\widetilde{\E}_{j}=\{0\}$. Then $\D=\widetilde{\D}^{(\ell)}$.
Denote 
$$\widetilde{{\mathcal L}}_j=\left\{\begin{array}{lll}
{\mathcal L}_j\quad \text{ if $j=\ell_1+...+\ell_i$;}\\
\ 
\{0\}\quad \text{otherwise.}
\end{array}\right.$$
As $(N, \{0\}, \{0\})$ is trivially a Hadamard triple,   $(N,\D, \widetilde{{\mathcal L}}_0\oplus....\oplus \widetilde{{\mathcal L}}_{\ell})$ is a $\ell$-stage  product-form Hadamard triple.
By doing so, we can without loss of generality assume that $\ell_1=\ell_2=....=\ell_k = 1$ in (\ref{eq_product-form_Hada-Section1}) of Definition (1.2) with the convention that some $\E_j(d)$ may be $\{0\}$.

\medskip


\medskip

Our proof starts with a general proposition. 

\begin{Prop}\label{proposition_hada-k}
Suppose that a digit set $\A = \A^{(k)}$ is generated by the following process:
\begin{equation}\label{A}
\left\{\begin{array}{lll}
 \A^{(0)}&=&{\mathcal C}_0\\
\A^{(1)}& = &\bigcup_{a_0\in\A^{(0)}} \left(a_0+ N {\mathcal C}_1(a_0)\right)\\
&\vdots&\\
\A^{(k)} &=& \bigcup_{a_{k-1}\in{\mathcal A}^{(k-1)}} \left(a_{k-1}+ N^{k} {\mathcal C}_{k}(a_{k-1})\right),
\end{array}\right.
\end{equation}
where $(N, {\mathcal C}_0, \widetilde{{\mathcal L}}_1)$ and $(N, {\mathcal C}_j(a_{j-1}),\widetilde{{\mathcal L}}_j)$ are Hadamard triples. Then $(N^k, \A, {\bf L})$ forms a Hadamard triple where 
${\bf L} =\bigoplus_{m=0}^{k}N^{k-m-1}\widetilde{{\mathcal L}}_m \subset\Z.$
\end{Prop}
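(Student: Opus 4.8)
The plan is to verify the Hadamard condition for $(N^k,\A,{\bf L})$ directly from its defining matrix, by induction on the number of stages. By Definition \ref{def_hada}, $(N^k,\A,{\bf L})$ is a Hadamard triple exactly when $\#\A=\#{\bf L}$ and the matrix $\frac{1}{\sqrt{\#\A}}\big(e(a\ell/N^k)\big)_{a\in\A,\,\ell\in{\bf L}}$ is unitary; since every entry has modulus $(\#\A)^{-1/2}$, each column already has unit norm, so it is enough to prove that for distinct $\ell,\ell'\in{\bf L}$ one has $\sum_{a\in\A}e\big(a(\ell-\ell')/N^k\big)=0$. The dimension count comes for free: the tower is assembled from direct sums at separated scales, so $\#\A=\prod_{j}\#\mathcal{C}_j$, and $\#\mathcal{C}_j=\#\widetilde{\mathcal{L}}_j$ because each $(N,\mathcal{C}_j,\widetilde{\mathcal{L}}_j)$ is a Hadamard triple, whence $\#\A=\#{\bf L}$ as ${\bf L}$ is itself a direct sum.

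The base case (one stage) is precisely the hypothesis that $(N,\mathcal{C}_0,\widetilde{\mathcal{L}}_0)$ is a Hadamard triple. For the inductive step I would write a generic digit as $a=a_{k-1}+N^{k}c_k$ with $a_{k-1}\in\A^{(k-1)}$ and $c_k\in\mathcal{C}_k(a_{k-1})$, and expand $\ell-\ell'$ along the scales appearing in ${\bf L}$, with lowest-scale component a difference of elements of $\widetilde{\mathcal{L}}_k$, say $\ell_k-\ell'_k$. Expanding the exponent $a(\ell-\ell')/N^{k}$ and collecting powers of $N$, every cross term carrying a non-negative power of $N$ is an integer and so contributes a trivial phase; what survives, modulo $1$, is that the dependence on the top digit $c_k$ is \emph{only} through $c_k(\ell_k-\ell'_k)/N$. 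Summing over $c_k$ first therefore isolates the inner sum $\sum_{c_k\in\mathcal{C}_k(a_{k-1})}e\big(c_k(\ell_k-\ell'_k)/N\big)$.

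This is where the hypothesis that the $(N,\mathcal{C}_k(a_{k-1}),\widetilde{\mathcal{L}}_k)$ are Hadamard triples \emph{sharing one common spectrum} $\widetilde{\mathcal{L}}_k$ is essential, and I expect it to be the crux. Orthogonality of the columns of each $\mathcal{C}_k$-matrix forces the inner sum to be $0$ when $\ell_k\neq\ell'_k$ and $\#\mathcal{C}_k$ when $\ell_k=\ell'_k$, and — this is the point — that value is the same for every base point $a_{k-1}$, since the spectrum does not vary with $a_{k-1}$. Hence if $\ell_k\neq\ell'_k$ the entire sum is $0$; and if $\ell_k=\ell'_k$, the constant $\#\mathcal{C}_k$ factors out, $\ell-\ell'$ reduces to $N$ times a difference of two distinct elements of the $(k-1)$-stage spectrum, and the residual sum over $a_{k-1}\in\A^{(k-1)}$ is exactly the quantity controlled by the inductive hypothesis and vanishes. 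This closes the induction.

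Two points need genuine care rather than routine computation. First is the scale book-keeping: one must arrange the pairing so that the top-scale digit meets the bottom-scale frequency, leaving a clean $1/N$ phase together with integer cross terms — this is exactly why the frequencies in ${\bf L}$ are attached to the scales in reversed order. Second is the data-dependence of the families $\mathcal{C}_k(a_{k-1})$, which is neutralized only by the common-spectrum hypothesis; without it the inner sum over $c_k$ could take different values over different $a_{k-1}$ and the peeling would break down. All remaining manipulations are the same base-$N$ orthogonality book-keeping already used in Lemma \ref{reduction lemma}.
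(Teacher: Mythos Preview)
Your proposal is correct and follows the same inductive scheme as the paper --- peel off the top stage, use that the families $\mathcal{C}_k(a_{k-1})$ share the common spectrum $\widetilde{\mathcal{L}}_k$, and reduce to the $(k-1)$-stage statement. The only real difference is the formulation of the Hadamard condition being verified: the paper works with the Parseval-type identity $\sum_{\lambda}|M_{\A^{(n)}}(\xi+\lambda)|^2=1$ and, at the inductive step, collapses the cross terms by invoking the duality that $\A^{(n-1)}$ is itself a spectrum of the measure $\delta_{\oplus_{m\le n-1}\widetilde{\mathcal{L}}_m/N^{m+1}}$; you instead check column orthogonality of the Hadamard matrix directly and sum over the top digit $c_k$ first. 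Your route is marginally more elementary (no need for the duality observation), while the paper's route stays in the Parseval language used throughout the rest of the argument; otherwise the two proofs are interchangeable.
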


\begin{proof}
We will prove by induction on $n$ that  $\bigoplus_{m=0}^{n}\frac{{\widetilde{{\mathcal L}}}_m}{N^{m+1}}$ is a spectrum of $\delta_{\A^{(n)}}$ for each $0\le n\le k$. When $n=k$, this is equivalent to our desired conclusion. 

 When $n=0$, as $\A^{(0)}={\mathcal C}_0$  and the direct sum is just $\frac{\widetilde{{\mathcal L}}_0}{N}$, the claim is true by the fact that $(N,{\mathcal C}_0,\widetilde{{\mathcal L}}_0)$ is a Hadamard triple. Suppose we have proved the statement for $n-1$. For the case $n$, we have $\A^{(n)} = \bigcup_{a_{n-1}\in\A^{(n-1)}} (a_{n-1}+N^n {\mathcal C}_n(a_{n-1}))$ and hence 
\begin{eqnarray}\label{A-spectrum-Section5}
&&\sum_{\lambda\in \oplus_{m=0}^{n}\frac{{\widetilde{{\mathcal L}}}_m}{N^{m+1}}}\left|M_{\A^{(n)}}\left(\xi+\lambda\right)\right|^2\nonumber\\
&=&\sum_{\lambda\in \oplus_{m=0}^{n}\frac{{{\mathcal L}}_m}{N^{m+1}}}\left|\frac{1}{\#\A^{(n-1)}}\sum_{a\in \A^{(n-1)}}e\left(-a(\xi+\lambda)\right)M_{{N^{n}{\mathcal C}_{n}(a)}}(\xi+\lambda)\right|^2\nonumber\\
&=&\sum_{\lambda_2\in \oplus_{m=0}^{n-1}\frac{{\widetilde{{\mathcal L}}}_m}{N^{m+1}}}\sum_{\lambda_1\in\frac{\widetilde{{\mathcal L}}_{n}}{N^{n+1}} }\left|\frac{1}{\#\A^{(n-1)}}\sum_{a\in \A^{(n-1)}}e\left(-a(\xi+\lambda_1+\lambda_2)\right)M_{{N^{n}{\mathcal C}_{n}(a)}}(\xi+\lambda_1)\right|^2.
\end{eqnarray}
Since $\lambda_2\subset \frac{1}{N^{n-1}}\Z$, $\lambda_2$ disappeared in $M_{{N^{n}{\mathcal C}_{n}(a)}}$ by the periodicity.  Note that from the induction hypothesis,  $\bigoplus_{m=0}^{n-1}\frac{\widetilde{{\mathcal L}}_m}{N^{m+1}}$ is a spectrum of $\delta_{\A^{(n-1)}}$, then  $\A^{(n-1)}$ is a spectrum of $\delta_{\oplus_{m=0}^{n-1}\frac{\widetilde{{\mathcal L}}_m}{N^{m+1}}}$ too.
\begin{eqnarray*}
&&\sum_{\lambda_2\in \oplus_{m=0}^{n-1}\frac{\widetilde{{\mathcal L}}_m}{N^{m+1}}}\left|\frac{1}{\#\A^{(n-1)}}\sum_{a\in \A^{(n-1)}}e\left(-a(\xi+\lambda_1+\lambda_2)\right)M_{{N^{n}{\mathcal C}_{n}(a)}}(\xi+\lambda_1)\right|^2\\
&=&\sum_{\lambda_2\in \oplus_{m=0}^{n-1}\frac{\widetilde{{\mathcal L}}_m}{N^{m+1}}}\frac{1}{(\#\A^{(n-1)})^2}\sum_{a_1, a_2\in \A^{(n-1)}}e\left(-(a_1-a_2)(\xi+\lambda_1+\lambda_2)\right)\\
&&\cdot M_{{N^{n}{\mathcal C}_{n}(a_1)}}(\xi+\lambda_1)\overline{M_{{N^{n}{\mathcal C}_{n}(a_2)}}(\xi+\lambda_1)}\\
&=&\frac{1}{(\#\A^{(n-1)})^2}\sum_{a_1, a_2\in \A^{(n-1)}}e\left(-(a_1-a_2)(\xi+\lambda_1)\right) M_{{N^{n}{\mathcal C}_{n}(a_1)}}(\xi+\lambda_1)\overline{M_{{N^{n}{\mathcal C}_{n}(a_2)}}(\xi+\lambda_1)}\\
&&\cdot \sum_{\lambda_2\in \oplus_{m=0}^{n-1}\frac{\widetilde{{\mathcal L}}_m}{N^{m+1}}}e\left(-(a_1-a_2)\lambda_2\right)\\
&=&\frac{1}{\#\A^{(n-1)}}\sum_{a\in \A^{(n-1)}}\left|M_{{N^{n}{\mathcal C}_{n}(a)}}(\xi+\lambda_1)\right|^2  \ \ \left(\mbox{since $\A^{(n-1)}$ is a spectrum of $\delta_{\oplus_{m=0}^{n-1}\frac{\widetilde{{\mathcal L}}_m}{N^{m+1}}}$}\right).
\end{eqnarray*}
Substituting it into \eqref{A-spectrum-Section5}, we have 
\begin{eqnarray*}
&&\sum_{\lambda\in \oplus_{m=0}^{n}\frac{\widetilde{{\mathcal L}}_m}{N^{m+1}}}\left|M_{\A^{(n)}}\left(\xi+\lambda\right)\right|^2\nonumber\\
&=&\frac{1}{\#\A^{(n-1)}}\sum_{a\in \A^{(n-1)}}\sum_{\lambda_1\in\frac{\widetilde{{\mathcal L}}_{n}}{N^{n+1}} }\left|M_{{N^{n}{\mathcal C}_{n}(a)}}(\xi+\lambda_1)\right|^2\\
&=&\frac{1}{\#\A^{(n-1)}}\sum_{a\in \A^{(n-1)}}1=1.
\end{eqnarray*}
This completes the proof
\end{proof}

Suppose now we are given a $k$-stage product-form as in Definition \ref{eq_product-form_Hada-Section1} with $\ell_1=...=\ell_k=1$. Next we will show that 
$$\textbf{D}=\D+N\D+...+N^{k-1}\D=\bigcup_{a\in\A}(a+N^{k}{\mathcal B}_a)$$
is a one-stage product-form Hadamard triple generated by Hadamard triples $(N^k,\A,\textbf{L}_1)$ and $(N^k, \B_a, \textbf{L}_2)$ for some $\A,\B_a,{\mathcal L}_1$ and ${\mathcal L}_2$. Recall first that $\D^{(n)}$ is the $n$-th stage digit set defined in the k-stage product-form, i.e. 
$$\D^{(n)}=\bigcup_{{d}_{n-1}\in {\D}^{(n-1)}}\Big(d_{n-1}+N^{n}\E_{n}(d_{n-1})\Big).
$$
We extend the definition of $\E_{n}(d)=\{0\}$ if $n<0$ and $n>k$, and 
\begin{equation}
\D^{(n)}=\{0\} \text{ when } n<0\quad \text{and}\quad {\D^{(n)}=\D^{(k)}=\D}  \text{ when } n\geq k,
\end{equation}
so that now $\D^{(n)}$ is defined for all $n\in\Z$.
We now define
\begin{equation}\label{A-m-Section5}
\textbf{D}^{(m)}=\D^{(m)}+ N\D^{(m-1)}+...+ N^{k-1}\D^{(m-k+1)},\quad 0\le m\le 2k-1,
\end{equation}
This implies that  $\textbf{D}^{(0)}=\D^{(0)}=\E_0$ and
\begin{eqnarray*}
\textbf{D}^{(2k-1)}&=&\D^{(2k-1)}+N\D^{(2k-2)}+...+N^{k-1}\D^{(k)}\\
&=&\D+N\D+...+N^{k-1}\D\\
&=&\textbf{D}.
\end{eqnarray*}
Note that
\begin{eqnarray}\label{eq-D-Section5}
\textbf{D}^{(m)}&=&\sum_{j=0}^{k-1}N^{j}\D^{(m-j)}\nonumber\\
& = &\sum_{j=0}^{k-1}N^{j}\left(\bigcup_{d_{m-j-1}\in \D^{(m-j-1)}}\left(d_{m-j-1}+N^{m-j}\E_{m-j}(d_{m-j-1})\right)\right)\nonumber\\
&=&\bigcup_{\sum_{j=0}^{k-1} N^{j}d_{m-j-1}\in \textbf{D}^{(m-1)}}\left(\sum_{j=0}^{k-1} N^{j}d_{m-j-1}+N^{m}\left(\sum_{j=0}^{k-1}\E_{m-j}(d_{m-j-1})\right)\right)\nonumber\\
&:=&\bigcup_{\textbf{d}_{m-1}\in \textbf{D}^{(m-1)}}\Big(\textbf{d}_{m-1}+N^{m}{\mathcal G}_{m}(\textbf{d}_{m-1})\Big)
\end{eqnarray}
where $\textbf{d}_{m-1}=\sum_{j=0}^{k-1} N^{j}d_{m-j-1}, {\mathcal G}_{m}(\textbf{d}_{m-1})=\sum_{j=0}^{k-1}\E_{m-j}({d}_{m-j-1})$. 
For any $\textbf{d}\in \textbf{D}^{(k-1)}$, we let 
$${\mathcal B}_{\textbf{d}}=\{b: \textbf{d}+N^k b\in \textbf{D}\}.$$
Then 
\begin{equation}\label{eqbfD}
    \textbf{D}=\bigcup_{\textbf{d}\in \textbf{D}^{(k-1)}}\left(\textbf{d}+N^k{\mathcal B}_{\textbf{d}}\right).
\end{equation}
\bigskip

\begin{theorem}\label{th_k-to-1-section5}
With reference to the above notation, there exists ${\bf L}_1$ and ${\bf L_2}$, both are subsets of integers, such that $(N^k, \textbf{D}^{(k-1)},{\bf L}_1)$, $(N^k, {\mathcal B}_{\textbf{d}},{\bf L}_2)$ and $(N^k, \textbf{D}^{(k-1)}+{\mathcal B}_{\textbf{d}},{\bf L}_1\oplus{\bf L}_2)$ are Hadamard triples for all $\textbf{d}\in \textbf{D}^{(k-1)}$. 
\end{theorem}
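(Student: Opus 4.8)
The plan is to recognize $\textbf{D}$ as a one-stage product-form at the enlarged base $N^k$, namely $\textbf{D}=\bigcup_{\textbf{d}\in \textbf{D}^{(k-1)}}(\textbf{d}+N^k{\mathcal B}_{\textbf{d}})$ as in \eqref{eqbfD}, and to feed the correct single-scale blocks into Proposition \ref{proposition_hada-k}. The first step is a clean reading of the increments ${\mathcal G}_m$ coming out of \eqref{eq-D-Section5}. Using the conventions $\E_n=\{0\}$ for $n<0$ or $n>k$, the window ${\mathcal G}_m=\sum_{j=0}^{k-1}\E_{m-j}(\cdot)$ collapses to a \emph{prefix} $\E_0\oplus\E_1(d_0)\oplus\cdots\oplus\E_m(d_{m-1})$ when $1\le m\le k-1$ (the sub-zero levels contribute only $\{0\}$), and to a \emph{suffix} $\E_{m-k+1}(d_{m-k})\oplus\cdots\oplus\E_k(d_{k-1})$ when $k\le m\le 2k-1$ (the super-$k$ levels contribute only $\{0\}$). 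By condition (ii) of Definition \ref{def-prod-form-Had}, each such prefix and suffix is a Hadamard pair at base $N$, and its spectrum ($\mathcal{L}_0\oplus\cdots\oplus\mathcal{L}_m$, respectively $\mathcal{L}_{m-k+1}\oplus\cdots\oplus\mathcal{L}_k$) depends only on the levels involved, \emph{not} on the path $\textbf{d}$.

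With this identification, $(N^k,\textbf{D}^{(k-1)},\textbf{L}_1)$ follows at once from Proposition \ref{proposition_hada-k} applied to the generating process of $\textbf{D}^{(k-1)}$ whose blocks are the prefixes ${\mathcal G}_0,\ldots,{\mathcal G}_{k-1}$ (rescaling the output spectrum so that it lies in $\Z$ at the common base $N^k$). Likewise, since $\textbf{D}^{(k-1)}\oplus N^k{\mathcal B}_{\textbf{d}}$ arises by continuing the same recursion, one gets ${\mathcal B}_{\textbf{d}}={\mathcal G}_k\oplus N{\mathcal G}_{k+1}\oplus\cdots\oplus N^{k-1}{\mathcal G}_{2k-1}$, a process whose blocks are the suffixes; Proposition \ref{proposition_hada-k} then yields $(N^k,{\mathcal B}_{\textbf{d}},\textbf{L}_2)$. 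Because every suffix spectrum is path-independent, the resulting $\textbf{L}_2$ is the \emph{same} set for all $\textbf{d}$, which is precisely the assertion that the $(N^k,{\mathcal B}_{\textbf{d}})$ are equivalent compatible pairs.

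For the combined triple I would compute $\textbf{D}^{(k-1)}\oplus {\mathcal B}_{\textbf{d}}$ scale by scale. Writing both summands in multiscale form, the digit occupying scale $N^i$ for $0\le i\le k-1$ is ${\mathcal G}_i\oplus{\mathcal G}_{k+i}$, i.e.\ a prefix $\E_0\oplus\cdots\oplus\E_i$ from $\textbf{D}^{(k-1)}$ glued to a suffix $\E_{i+1}\oplus\cdots\oplus\E_k$ from ${\mathcal B}_{\textbf{d}}$, which together form a \emph{full window} $\E_0\oplus\E_1\oplus\cdots\oplus\E_k$. If each of these full windows is a Hadamard pair at base $N$ with the full spectrum $\mathcal{L}_0\oplus\cdots\oplus\mathcal{L}_k$, then one further application of Proposition \ref{proposition_hada-k} (all blocks being full windows) produces $(N^k,\textbf{D}^{(k-1)}\oplus{\mathcal B}_{\textbf{d}},\textbf{L}_1\oplus\textbf{L}_2)$ and finishes the proof, after which Theorem \ref{theorem_main1} applies verbatim.

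The hard part lies in this last point. In the scale-$N^i$ block the prefix ranges over \emph{all} of $\textbf{D}^{(k-1)}$ while the suffix is tied to the fixed $\textbf{d}$, so the full window is a \emph{mixed} direct sum $\E_0\oplus\cdots\oplus\E_i(d)\oplus\E_{i+1}(d')\oplus\cdots\oplus\E_k(d'')$ with inconsistent paths, whereas condition (ii) only supplies Hadamard triples along \emph{consistent} paths. The key lemma I expect to have to establish is a scale-separation property extracted from condition (ii): for $j'<j$ and any admissible arguments, every $b\in\E_j(\cdot)$ and every $\ell\in\mathcal{L}_{j'}$ satisfy $b\ell\equiv 0 \pmod N$, equivalently $|M_{\E_j(\cdot)/N}|^2$ is invariant under translation by $\mathcal{L}_{j'}$ (this encodes the mixed-radix duality between the $\E_j$ and the $\mathcal{L}_j$, as one sees already in the model $\E_j=\{0,2^j\}$, $\mathcal{L}_{j'}=\{0,N/2^{j'+1}\}$). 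Granting it, the Jorgensen--Pedersen mask sum for the mixed full window telescopes: summing over the shallowest spectrum $\mathcal{L}_0$ first, under which all deeper masks are invariant, contributes a factor $1$ via the level-$0$ Hadamard relation, and induction on the remaining levels gives total mass $1$ independently of how the paths are mixed. Proving this invariance cleanly and uniformly in the path is the main technical hurdle of the theorem.
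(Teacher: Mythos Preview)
Your three-step plan is exactly the paper's proof: Step 1 builds $\textbf{L}_1$ from the prefixes ${\mathcal G}_0,\dots,{\mathcal G}_{k-1}$ via Proposition \ref{proposition_hada-k}, Step 2 builds $\textbf{L}_2$ from the suffixes ${\mathcal G}_k,\dots,{\mathcal G}_{2k-1}$, and Step 3 recognizes that ${\mathcal G}_m\oplus{\mathcal G}_{k+m}$ is a full window $\E_0\oplus\cdots\oplus\E_k$ and applies Proposition \ref{proposition_hada-k} once more.

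Where you go wrong is in your reading of Definition \ref{def-prod-form-Had}(ii). The quantifier there is ``for all $d_j\in\mathcal{D}^{(j)}$'', with each $d_j$ ranging \emph{independently} over $\mathcal{D}^{(j)}$; it does not require the $d_j$'s to sit on a single consistent branch of the tree. So when the scale-$N^m$ block in Step 3 is
\[
{\mathcal G}_m(\textbf{d}_{m-1})\oplus{\mathcal G}_{k+m}(\textbf{d}+N^k b_{m-1})=\bigoplus_{j=0}^{m}\E_j(d_{j-1})\oplus\bigoplus_{j=m+1}^{k}\E_j(d'_{j-1})
\]
with the $d_{j-1}$ coming from $\textbf{d}_{m-1}$ and the $d'_{j-1}$ coming from $\textbf{d}$ and $b_{m-1}$, condition (ii) with $m=k$ still applies directly and gives that $(N,\ \bigoplus_{j=0}^k\E_j(\cdot),\ {\mathcal L}_0\oplus\cdots\oplus{\mathcal L}_k)$ is a Hadamard triple. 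There is no ``hard part'': the mixed window is already covered by hypothesis, and the paper simply cites condition (ii) and invokes Proposition \ref{proposition_hada-k}. Your proposed scale-separation lemma ($b\ell\equiv 0\pmod N$ for $b\in\E_j$, $\ell\in{\mathcal L}_{j'}$, $j'<j$) is not needed and, moreover, need not hold in the stated generality, since nothing in the definition pins down the individual $\E_j$ and ${\mathcal L}_{j'}$ to a mixed-radix form.
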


\begin{proof} The proof will be divided into three steps. 

\medskip

\noindent {\bf Step 1: Finding ${\bf L}_1$ for $(N^k, \textbf{D}^{(k-1)},{\bf L}_1)$ to form a Hadamard triple.}
From the equation \eqref{eq-D-Section5}, $\textbf{D}^{(k-1)}$ is generated in the following process:
\begin{equation}\label{A-section7}
\left\{\begin{array}{lll}
\textbf{D}^{(0)}&=&\E_0\\
\textbf{D}^{(1)}& = &\bigcup_{\textbf{d}_0\in\textbf{D}^{(0)}} \left(\textbf{d}_0+ N {\mathcal G}_1(\textbf{d}_0)\right)\\
&\vdots&\\
\textbf{D}^{(k-1)} &=& \bigcup_{\textbf{d}_{k-2}\in\textbf{D}^{(k-2)}} \left(\textbf{d}_{k-2}+ N^{k-1} {\mathcal G}_{k-1}(\textbf{d}_{k-2})\right),
\end{array}\right.
\end{equation}
where $\textbf{d}_{m-1}=\sum_{j=0}^{k-1} N^{j}d_{m-j-1}$ with $d_{m-j-1}\in\D^{(m-j-1)}$ and 
\begin{equation*}\label{eq-AG}
{\mathcal G}_{m}(\textbf{d}_{m-1})=\bigoplus_{j=0}^{k-1}\E_{m-j}({d}_{m-j-1})=\E_{m}(d_{m-1})\oplus...\oplus\E_{0}, \quad \forall\ 0\le m\le k-1 .
\end{equation*}
It follows from $\E_n(d_{n-1})=\{0\}$ for $n<0$. By  Definition \ref{def-prod-form-Had} (ii), we have 
$(N, {\mathcal G}_{m}(\textbf{d}_{m-1}), {\mathcal L}_0\oplus...\oplus{\mathcal L}_m)$ forms a Hadarmard triple. For any $0\le m\le k-1$, we let $\widetilde{{\mathcal L}_m}={\mathcal L}_0\oplus...\oplus{\mathcal L}_m\subset\Z.$ By Proposition \ref{proposition_hada-k}, defining $$\textbf{L}_1=\bigoplus_{m=0}^{k-1}N^{k-m-1}\widetilde{{\mathcal L}}_m\subset\Z.$$
 ensures us that $(N^{{k}}, \textbf{D}^{(k-1)}, \textbf{L}_1)$ is a Hadamard triple. 
 \bigskip
 
 \noindent {\bf Step 2: Finding ${\bf L}_2$ for $(N^k, \B_a,{\bf L}_2)$ to form a Hadamard triple.}
For any $0\le m\le k-1$, we define 
    $${\mathcal B}_{{\textbf{d}}}^{(m)}=\left\{b: {\textbf{d}}+N^kb\in \textbf{D}^{(k+m)}\right\}.$$
Since $\textbf{D}=\textbf{D}^{(2k-1)}$, we have 
$$\B_{\textbf{d}}=\{b: \textbf{d}+N^k b\in\textbf{D}^{2k-1}\}=\B_{\textbf{d}}^{(k-1)}.$$ From equation \eqref{eq-D-Section5}, we have the following process:
\begin{equation}\label{B-section7}
\left\{\begin{array}{lll}
\B_{\textbf{d}}^{(0)}&=&{\mathcal G}_k({\textbf{d}})\\
\ \quad\\
\B_{\textbf{d}}^{(1)}& = &\bigcup_{b_0\in\B_{\textbf{d}}^{(0)}} \left(b_0+ N {\mathcal G}_{k+1}(\textbf{d}+N^k b_0)\right)\\
&\vdots&\\
\B_{\textbf{d}}^{(k-1)} &=& \bigcup_{b_{k-2}\in\B_{\textbf{d}}^{(k-2)}} \left(b_{k-2}+ N^{k-1} {\mathcal G}_{2k-1}(\textbf{d}+N^kb_{k-2})\right),
\end{array}\right.
\end{equation}
and $\textbf{d}+N^k b_{m}\in \textbf{D}^{(k+m)}$ since $b_{m}\in \B_{\textbf{d}}^{(m)}$. Recall 
\begin{equation*}\label{A-m}
\textbf{D}^{(k+m)}=\D^{(k+m)}+ N\D^{(k+m-1)}+...+ N^{k-1}\D^{(m+1)}.
\end{equation*}
So we can write  $\textbf{d}+N^k b_{m}=\sum_{j=0}^{k-1}N^jd_{k+m-j}$ with $d_{k+m-j}\in \D^{(k+m-j)}$. As $\E_{n}=\{0\}$ for each $n>k$, so 
$${\mathcal G}_{k+m}(\textbf{d}+N^k b_{m})=\bigoplus_{j=0}^{k-1}\E_{k+m-j}({d}_{k+m-j-1})=\E_{k}(d_{k-1})\oplus...\oplus\E_{m+1}(d_m),\quad  \forall\ 0\le m\le k-1.$$
By  Definition \ref{def-prod-form-Had} (ii), we have 
$(N, {\mathcal G}_{k+m}(\textbf{d}+N^k b_{m}), {\mathcal L}_k\oplus...\oplus{\mathcal L}_{m+1})$ forms a Hadarmard triple. For any $m\geq0$, we let $\widetilde{{\mathcal L}_m}={\mathcal L}_k\oplus...\oplus{\mathcal L}_{m+1}\subset\Z.$ By Proposition \ref{proposition_hada-k}, defining $$\textbf{L}_2=\bigoplus_{m=0}^{k-1}N^{k-m-1}\widetilde{{\mathcal L}}_m\subset\Z.$$
 ensures us that $(N^{{k}}, \B_{\textbf{d}}, \textbf{L}_2)$ is a Hadamard triple. 
 
 \bigskip

 \noindent {\bf Step 3: $(N^k, \textbf{D}^{(k-1)}+\B_{\textbf{d}},{\bf L}_1+{\bf L}_2)$ forms a Hadamard triple.}  Combining \eqref{A-section7} and \eqref{B-section7}, $\textbf{D}^{(k-1)}+\B_{\textbf{d}}=\textbf{D}^{(k-1)}+\B_{\textbf{d}}^{(k-1)}$ is generated as following:
\begin{equation*}
\left\{\begin{array}{lll}
\textbf{D}^{(0)}+\B_{\textbf{d}}^{(0)}&=&{\mathcal E}_0+{\mathcal G}_{k}(\textbf{d})\\
\ \quad\\
\textbf{D}^{(1)}+\B_{\textbf{d}}^{(1)}& = &\bigcup_{\textbf{d}_0+b_0\in\textbf{D}^{(0)}+\B_{\textbf{d}}^{(0)}} \left(\textbf{d}_0+b_0+ N \left({\mathcal G}_1(\textbf{d}_0)+{\mathcal G}_{k+1}(\textbf{d}+N^kb_0)\right)\right)\\
&\vdots&\\
\textbf{D}^{(k-1)}+\B_{\textbf{d}}^{(k-1)}& = &\bigcup_{\textbf{d}_{k-2}+b_{k-2}\in\textbf{D}^{(k-1)}+\B_{\textbf{d}}^{(k-2)}} \left(\textbf{d}_{k-2}+b_{k-2}+ N^{k-1} \left({\mathcal G}_{k-1}(\textbf{d}_{k-1})+{\mathcal G}_{2k-1}(\textbf{d}+N^kb_{k-1})\right)\right),\\
\end{array}\right.
\end{equation*}
where 
$${\mathcal G}_m(\textbf{d}_{m})+{\mathcal G}_{k+m}(\textbf{d}+N^kb_{m})=\bigoplus_{j=0}^{m}\E_{j}(d_{j-1})\oplus\bigoplus_{j=m+1}^{k}\E_{j}(d_{j-1})$$
for some $d_j\in\D^{(j)}$. 
Hence $(N, {\mathcal G}_m(\textbf{d}_{m})+{\mathcal G}_{k+m}(\textbf{d}+N^kb_{m}), {\mathcal L}_0\oplus...\oplus{\mathcal L}_k)$ is a Hadarmard triple and so  is
$(N^k, \textbf{D}^{(k-1)}+\B_{\textbf{d}}, \textbf{L}_1\oplus\textbf{L}_2)$ by Proposition \ref{proposition_hada-k}.
\end{proof}

\medskip

The proof of Theorem \ref{theorem_main2} is now immediate. 

\medskip

\noindent{\it Proof of Theorem \ref{theorem_main2}.} Recall that the self-similar measure $\mu_{N,\D} = \mu_{N^k,{\bf D}}$. As we have written ${\bf D}$ as in (\ref{eqbfD}).
By Theorem \ref{th_k-to-1-section5}, we know that ${\bf D}$ is now generated by a one-stage product-form Hadamard triple as in Definition \ref{definition_product-form}. $\mu_{N^k,{\bf D}}$ is therefore a spectral measure by Theorem \ref{theorem_main1}.\qquad$\Box$

\section{Modulo Product-form and self-similar tiles}\label{section-mod}

In \cite{LLR2013} and \cite{LLR2017}, the authors introduced the modulo product-forms and showed that they cover all tile digit sets of $N = p^{\alpha}q$. The product-form was defined via modulo action along the cyclotomic polynomial factors.  We would like to show that modulo product-form is also the type of product-forms formulated in Definition \ref{definition_product-form}. 

\medskip

Let us first recall how modulo product-form are defined. The cyclotomic polynomial $\Phi_d$ is the minimal polynomial of $e^{2\pi i/d}$ in $\Z[x]$.  Given a set $\E$ such that it is decomposed into a direct sum of $k$ sets $\E = \E_0\oplus....\oplus\E_{k}$ and ${\mathfrak T}$ a set of cyclotomic polynomials $\Phi_d$ such that $\Phi_d(x)$ divides $P_{\E}(x)$. Let ${\mathfrak S}_j = \{d: \Phi_d\in{\mathfrak T}, \  d>1,  \ \Phi_d (x) \ |  \ P_{\E_j}(x)\}$. We know that $\bigcup_{j=0}^k{\mathfrak S}_j = {\mathfrak T}.$ Define
$$
m_j = \mbox{l.c.m.} \{d: \Phi_d\in{\mathfrak T}, \  d>1,  \ \Phi_d (x) \ |  \ P_{\E_i}(x) \ \mbox{for some} \ i = 0,...,j\}
$$
For a given positive integers $\ell_1,\ell_2,...,\ell_k$ and $N\ge 2$,  we define for $j = 0,...,k$
\begin{equation}\label{kernel_polynomial}
 K^{(j)}(x)  = \prod_{i=0}^j\prod_{d\in{\mathfrak S}_i} \Phi_d (x^{N^{\ell_1+...+\ell_i}})
   \end{equation}
\begin{equation}\label{kernel_nj}
n_j = \mbox{l.c.m.} \{d:  \  d>1,  \ \Phi_d(x) \ |  K^{(j)}(x)\}.
\end{equation}
Then the {\it modulo product-form} of $(\E, {\mathfrak T})$  is the  $\D = \D^{(k)}$ generated inductively by 
\begin{equation}\label{eq-D}
\left\{\begin{array}{lll}
\D^{(0)}&=&\E_0\\
\ 
\D^{(1)}& = &\D^{(0)} + N^{\ell_1}\E_1 \ (\mbox{mod}  \ n_1)\\
&\vdots&\\
\D^{(k)}& = &\D^{(k-1)} + N^{\ell_1+...+\ell_k}\E_{k} \ (\mbox{mod}  \ n_k)
\end{array}\right.
\end{equation}

\begin{Lem}\label{lemn-j}
With reference to above notation, $n_j = m_j N^{\ell_1+...+\ell_j}$ and $K^{(k)}(x) \ | \  P_{\D}(x)$. 
\end{Lem}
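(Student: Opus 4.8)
The plan is to reduce both assertions to one elementary fact about cyclotomic polynomials: the least common multiple of the orders of all roots of $\Phi_d(x^n)$ equals $dn$. First I would record this sublemma. A number $\zeta$ is a root of $\Phi_d(x^n)$ exactly when $\zeta^n$ is a primitive $d$-th root of unity; if $\operatorname{ord}(\zeta)=e$ then $\operatorname{ord}(\zeta^n)=e/\gcd(e,n)=d$, so $e=d\gcd(e,n)$ divides $dn$, while $e=dn$ is attained by any $\zeta$ of order $dn$. Hence $\operatorname{lcm}\{e:\Phi_e(x)\mid\Phi_d(x^n)\}=dn$, and every such $e$ divides $dn$.

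Writing $L_i=\ell_1+\cdots+\ell_i$ and $s_i=\operatorname{lcm}_{d\in\mathfrak{S}_i}d$, applying the sublemma factor-by-factor to $K^{(j)}(x)=\prod_{i=0}^{j}\prod_{d\in\mathfrak{S}_i}\Phi_d(x^{N^{L_i}})$ shows that the orders of its cyclotomic factors have least common multiple $n_j=\operatorname{lcm}_{i}\bigl(N^{L_i}s_i\bigr)$, the outer $\operatorname{lcm}$ ranging over the stages $i\le j$ with $\mathfrak{S}_i\ne\emptyset$. The divisibility $n_j\mid m_jN^{L_j}$ is automatic, since $s_i\mid m_j$ and $L_i\le L_j$ give $N^{L_i}s_i\mid m_jN^{L_j}$ for each such $i$. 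For the reverse divisibility I would note $m_j\mid n_j$ (each $s_i$ divides the order $N^{L_i}s_i$ of $K^{(j)}$) and, using $\mathfrak{S}_j\ne\emptyset$, $N^{L_j}\mid n_j$ (the stage-$j$ factors contribute the order $N^{L_j}s_j$). The decisive structural input is that $\gcd(m_j,N)=1$, i.e. that the cyclotomic polynomials collected in $\mathfrak{T}$ are coprime to $N$; granting this, $m_jN^{L_j}=\operatorname{lcm}(m_j,N^{L_j})\mid n_j$, and combining the two divisibilities gives the identity $n_j=m_jN^{L_j}$.

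For $K^{(k)}\mid P_{\mathcal{D}}$ I would induct on the stage $j$, proving $K^{(j)}\mid P_{\mathcal{D}^{(j)}}$. The base $j=0$ holds because the distinct irreducibles $\Phi_d$, $d\in\mathfrak{S}_0$, all divide $P_{\mathcal{E}_0}=P_{\mathcal{D}^{(0)}}$. For the inductive step, the modulo product-form recursion \eqref{eq-D} gives the mask identity $P_{\mathcal{D}^{(j)}}(x)\equiv P_{\mathcal{D}^{(j-1)}}(x)\,P_{\mathcal{E}_j}(x^{N^{L_j}})\pmod{x^{n_j}-1}$; since $\Phi_d\mid P_{\mathcal{E}_j}$ forces $\Phi_d(x^{N^{L_j}})\mid P_{\mathcal{E}_j}(x^{N^{L_j}})$, the induction hypothesis yields $K^{(j)}=K^{(j-1)}\prod_{d\in\mathfrak{S}_j}\Phi_d(x^{N^{L_j}})\mid P_{\mathcal{D}^{(j-1)}}(x)\,P_{\mathcal{E}_j}(x^{N^{L_j}})$. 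Now the first part of the lemma is exactly what is needed: every cyclotomic factor $\Phi_e$ of $K^{(j)}$ has $e\mid n_j$, so each root $\zeta$ of $K^{(j)}$ is an $n_j$-th root of unity; evaluating the mask identity at $\zeta$ kills the $(x^{n_j}-1)$ term and shows $P_{\mathcal{D}^{(j)}}(\zeta)=P_{\mathcal{D}^{(j-1)}}(\zeta)\,P_{\mathcal{E}_j}(\zeta^{N^{L_j}})=0$. Thus every root of $K^{(j)}$ is a root of $P_{\mathcal{D}^{(j)}}$, giving $K^{(j)}\mid P_{\mathcal{D}^{(j)}}$ (the factors are squarefree, since the decomposition $\mathcal{E}=\mathcal{E}_0\oplus\cdots\oplus\mathcal{E}_k$ assigns each $d\in\mathfrak{T}$ to a single stage); taking $j=k$ finishes the proof.

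The step I expect to be the main obstacle is the reverse divisibility $m_jN^{L_j}\mid n_j$, and precisely the coprimality $\gcd(m_j,N)=1$ on which it rests. Without it one only obtains $\operatorname{lcm}(m_j,N^{L_j})\mid n_j$, and small examples (for instance $N=4$ with a factor $\Phi_2$ at stage $0$ and $\Phi_3$ at stage $1$, where $\operatorname{lcm}(2,12)=12$ but $m\,N=24$) show that $n_j$ can be a proper divisor of $m_jN^{L_j}$. The real work is therefore to read off from the construction of the modulo product-form in \cite{LLR2017} that $\mathfrak{T}$ consists of factors coprime to $N$ (more generally, that for each prime $p\mid N$ the $p$-adic content of $s_i$ is non-decreasing in $i$, so the highest $p$-power is carried by the deepest relevant stage). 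Once this alignment is secured, the identity $n_j=m_jN^{L_j}$ and its role—guaranteeing that the reduction modulo $n_j$ at each stage never annihilates a factor of $K^{(j)}$—both follow as above.
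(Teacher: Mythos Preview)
Your diagnosis of the first identity is correct, and in fact the paper's own argument for the reverse divisibility is defective: it asserts that $\Phi_{dN^{L_j}}(x)$ divides $K^{(j)}(x)$ for every $d\in\mathfrak S_i$ with $i\le j$, but the factor of $K^{(j)}$ attached to such a $d$ is $\Phi_d(x^{N^{L_i}})$, whose largest cyclotomic divisor is $\Phi_{dN^{L_i}}$, not $\Phi_{dN^{L_j}}$. So the equality $n_j=m_jN^{L_j}$ is not true in the generality stated; a valid instance of the construction already witnesses this (take $N=4$, $\mathcal E_0=\{0,2\}$, $\mathcal E_1=\{0,1\}$, so $\mathfrak S_0=\{4\}$, $\mathfrak S_1=\{2\}$, $\ell_1=1$, giving $n_1=8$ but $m_1N=16$).

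However, your proposed resolution via $\gcd(m_j,N)=1$ is the wrong one for this paper. In the first-order modulo product-forms actually used (Theorem~\ref{theorem5.8LLR} and Proposition~\ref{high-to-one}) one has $\mathfrak T=\{\Phi_d:d\mid N,\ d>1\}$, so every $d\in\mathfrak T$ divides $N$ and $\gcd(m_j,N)=m_j>1$. Your parenthetical condition is the right one: if for each prime $p\mid N$ the valuation $v_p(s_i)$ of $s_i=\operatorname{lcm}\mathfrak S_i$ is non-decreasing in $i$, then for every $p$ one has $v_p(n_j)=\max_{i\le j}\bigl(v_p(s_i)+L_iv_p(N)\bigr)=v_p(s_j)+L_jv_p(N)=v_p(m_jN^{L_j})$, and the identity follows. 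This monotonicity does hold in each of the three $p^\alpha q$ families (one checks directly that $s_0\mid s_1\mid\cdots$), so the lemma is true where it is applied, but it should carry this as an explicit hypothesis.

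For the second assertion your induction is essentially the paper's: both reduce to showing $K^{(j)}$ divides each of $P_{\mathcal D^{(j-1)}}(x)P_{\mathcal E_j}(x^{N^{L_j}})$ and $x^{n_j}-1$. The paper handles the first term by the factor-by-factor product argument (cleaner than evaluating at roots), but both arguments need $K^{(j)}\mid x^{n_j}-1$, which amounts to $K^{(j)}$ being squarefree. Your justification (``each $d\in\mathfrak T$ is assigned to a single stage'') rules out repeated $d$'s but not a shared cyclotomic factor between $\Phi_d(x^{N^{L_i}})$ and $\Phi_{d'}(x^{N^{L_{i'}}})$ for distinct pairs $(i,d)\ne(i',d')$; the paper glosses over this point as well. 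In the concrete $p^\alpha q$ families the $\mathfrak S_i$ are arranged so that no such collision occurs, but in general this too should be checked.
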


\begin{proof}
A property of cyclotomic polynomials is that 
\begin{equation}\label{prop_cyclo}
\Phi_d (x^s) = \left\{\begin{array}{ll}  \Phi_{ds}(x)&  \mbox{if $s$ is a factor of $d$}\\ \Phi_d(x)\Phi_{ds}(x) &  \mbox{if $s$ is a not factor of $d$} \end{array}\right.
\end{equation}
From this property, we know that for all $d$ such that $\Phi_d\in{\mathfrak T}$ and $\Phi_d(x) \ | \ P_{\E_i}(x)$ with $0\le i\le j$,  $\Phi_{dN^{\ell_1+..+\ell_j}}(x)$ is a cyclotomic factor in $K^{(j)}(x)$. Hence, $n_j\ge m_jN^{\ell_1+...+\ell_j}$. Conversely, if $\Phi_e$ is a cyclotomic polynomial dividing $K^{(j)}(x)$,  (\ref{prop_cyclo}) implies that $e = d m$ where $d\in{\mathfrak S}_i$ and $m$ divides $N^{\ell_1+...+\ell_i}$ for some $0\le i\le j$. Hence, $e$ divides $m_jN^{\ell_1+...+\ell_j}$. This shows that $n_j = m_jN^{\ell_1+...+\ell_j}$. 

\medskip

For the second statement, we note that $\D^{(j)} = \D^{(j-1)}+ N^{\ell_1+...+\ell_j} \E_j$ (mod $n_j$) is equivalent to saying 
\begin{equation}\label{equation_D_j}
P_{\D^{(j)}}(x) = P_{\D^{(j-1)}}(x)P_{\E_{j}} (x^{\ell_1+...+\ell_j}) + (x^{n_j}-1)Q(x). 
\end{equation}
We prove by induction that $K^{(j)}(x) \ | \ P_{\D^{(j)}}(x)$ and our statement will follow when $j = k$. It is clearly true when $j = 0$. Suppose that $K^{(j-1)}(x) \ | \ P_{\D^{(j-1)}}(x)$. We note that $\Phi_d(x^{\ell_1+...+\ell_j}) \ | \ P_{\E_j}(x^{\ell_1+...+\ell_j})$. Hence, by induction hypothesis, $K^{(j)}(x)$ divides the first term on the right hand side of (\ref{equation_D_j}). As $x^{n_j}-1 = \prod_{d|n_j}\Phi_d(x)$, $K^{(j)}(x)$ divides $x^{n_j}-1$. Hence, $K^{(j)}(x)$ divides $P_{\D^{(j)}}(x)$ follows. 
\end{proof}

\begin{Prop}\label{th-mpf}
A {\it modulo product-form} of $(\E, {\mathfrak T})$  is a $k-$stage product-form digit with $\D = \D^{(k)}$  generated in the following process:
\begin{equation*}\label{eq_product-form_Hada-Section1}
\left\{\begin{array}{lll}
\ \D^{(0)}&=&\E_0\\
\ 
\D^{(1)}& = &\bigcup_{d_0\in\D^{(0)}} \Big(d_0+ N^{\ell_1} {\mathcal E}_1(d_0)\Big)\\
\
\D^{(2)}& = &\bigcup_{d_0\in\D^{(1)}} \Big(d_1+ N^{\ell_1+\ell_2} {\mathcal E}_2(d_1)\Big)\\
&\vdots&\\
\ \D^{(k)} &=& \bigcup_{d_{k-1}\in{\mathcal D}^{(k-1)}} \Big(d_{k-1}+ N^{\ell_1+...+\ell_{k}} {\mathcal E}_k(d_{k-1})\Big),
\end{array}\right.
\end{equation*}
where $\E_j(d)\equiv \E_j\pmod{m_j}$ for each $d\in \D^{(j-1)}$.
\end{Prop}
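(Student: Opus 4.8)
The plan is to induct on the stage index $j$, stripping off one modular reduction at a time and reorganising it as a union indexed by the digits already built. The base case $\D^{(0)}=\E_0$ already has the required shape, so I would assume $\D^{(j-1)}$ has been produced by the process and examine the single step $\D^{(j)}=\D^{(j-1)}+N^{L}\E_j \pmod{n_j}$, where I abbreviate $L=\ell_1+\cdots+\ell_j$. The one external input I need here is Lemma \ref{lemn-j}, which factors the modulus as $n_j=m_j N^{L}$; this is precisely what separates the two scales $N^{L}$ and $m_j$ and drives the whole argument.

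Next I would compute the fibre of the reduction over a fixed base digit. Writing $d=a_d+N^{L}b_d$ with $0\le a_d<N^{L}$ and $b_d=\lfloor d/N^{L}\rfloor$, and using $N^{L}\mid n_j$, reduction modulo $n_j=m_jN^{L}$ leaves the residue $a_d$ untouched and only reduces the high-order part modulo $m_j$:
\[
(d+N^{L}e)\bmod n_j=a_d+N^{L}\big((b_d+e)\bmod m_j\big),\qquad e\in\E_j .
\]
Setting $\E_j(d):=\{\,((b_d+e)\bmod m_j)-b_d:\ e\in\E_j\,\}$, the right-hand side is exactly $d+N^{L}\E_j(d)$, so the fibre over $d$ has the form demanded by the generation process, and taking the union over $d\in\D^{(j-1)}$ recovers $\D^{(j)}$. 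The congruence is then immediate, since $((b_d+e)\bmod m_j)-b_d\equiv e \pmod{m_j}$, whence $\E_j(d)\equiv\E_j\pmod{m_j}$; note that the dependence on $d$ enters only through $b_d$, so it disappears precisely when $d<N^{L}$, i.e.\ when the reduction produces no wrap-around.

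The step I expect to cost the most is checking that this union is a genuine ($=$ direct) product-form, i.e.\ that the fibres over distinct base digits are disjoint and hence $\#\D^{(j)}=\#\D^{(j-1)}\cdot\#\E_j$. Because the fibre over $d$ sits inside the single residue class $a_d=d\bmod N^{L}$ modulo $N^{L}$, disjointness is equivalent to the digits of $\D^{(j-1)}$ being pairwise distinct modulo $N^{L}$. This is the point where the finer arithmetic of the modulo product-form is unavoidable: I would deduce it from the non-degeneracy recorded in Lemma \ref{lemn-j} (the divisibility $K^{(k)}(x)\mid P_{\D}(x)$) together with the fact, proved in \cite{LLR2017}, that the modulo product-form is a tile digit set with $\#\D=N=\#\E_0\cdots\#\E_k$, since any collision modulo $N^{L}$ would force a strict loss of digits and contradict this count. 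Once disjointness is secured, the displayed union is the asserted $k$-stage product-form.
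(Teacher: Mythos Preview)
Your core idea is the same as the paper's: both arguments rest entirely on the factorisation $n_j=m_jN^{L}$ from Lemma~\ref{lemn-j}, which lets the modular shift by $n_j$ be absorbed into the $\E_j$-component at scale $N^{L}$. The paper, however, dispatches this in one line: since $\D^{(j)}\equiv\D^{(j-1)}+N^{L}\E_j\pmod{n_j}$, every element of $\D^{(j)}$ is of the form $d+N^{L}e+n_jz(d,e)=d+N^{L}\bigl(e+m_jz(d,e)\bigr)$ for some integer $z(d,e)$, and one simply sets $\E_j(d):=\{e+m_jz(d,e):e\in\E_j\}$, which visibly satisfies $\E_j(d)\equiv\E_j\pmod{m_j}$. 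No splitting of $d$ into low and high $N^{L}$-adic parts is needed, and no particular residue system is singled out.

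Your final paragraph on disjointness is unnecessary and also the weakest part of the write-up. The proposition asserts only the $\bigcup$-structure, not a disjoint union; the paper never addresses disjointness here, and the later application (Theorem~\ref{theorem_FHL}) handles cardinality through the explicit Hadamard triple verification, not via Proposition~\ref{th-mpf}. Moreover, your claimed \emph{equivalence} between disjointness of fibres and distinctness of $\D^{(j-1)}$ modulo $N^{L}$ is only one implication: distinct residues $a_d$ certainly force disjoint fibres, but if two $d,d'$ share a residue the fibres may still be disjoint depending on $\E_j$ and the shifts $b_d,b_{d'}$. Finally, invoking the tile property from \cite{LLR2017} is more than you are entitled to, since the proposition is stated for a general modulo product-form of $(\E,\mathfrak{T})$, not only for those that happen to be tile digit sets. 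Dropping that paragraph, your argument reduces exactly to the paper's.
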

\begin{proof}
By Lemma \ref{lemn-j},
$$\D^{(j)}\equiv\D^{(j-1)}+N^{\ell_1+...+\ell_j}\E_j\pmod{n_j},\quad n_j=m_jN^{\ell_1+...+\ell_j}.$$
So we can write $\D^{(j)}$ as 
$$\D^{(j)}=\left\{d+N^{\ell_1+...+\ell_j}(e+m_j\cdot z(d,e)): d\in \D^{(j-1)}, e\in\E_j \right\}=\bigcup_{d\in \D^{(j-1)}}\left(d+N^{\ell_1+...+\ell_j}\E_j(d)\right)$$
where $z(d, e)\in\Z$ and $\E_j(d)=\{e+m_j\cdot z(d,e): e\in \E_j\}$. From the definition of $m_j$, we have 
$$\E_j(d)\equiv \E_j\pmod{m_j}.$$
\end{proof}

\begin{Def}
We say that $\D$ is a {\it first-order modulo product-form of $N>1$} if $\E \equiv \Z_N$ and ${\mathfrak T} = \{\Phi_d: d|N, d>1\}$. 

\medskip

Inductively, We say that $\D$ is a {\it $m$-th order modulo product-form of $N>1$} if $\E$ is a $(m-1)$-th order product-form and ${\mathfrak T} = \{\Phi_d: \Phi_d(x) \ |  \ K_{m-1}^{(k)}(x)\}$ where $K_{(m-1)}^{(k)}(x)$ is the polynomial in (\ref{kernel_polynomial}) for the $(m-1)$-th order product-form.
\end{Def}

In the above definition, it says that we first generate a first order product-form $\E_1$. It may be possible to rearrange $\E_1$ into another direct summands. From there, we can use the same procedure to generate a second order product-form, and the process can go on. 
We now let $\E_p = \{0,1,...,p-1\}$. The following theorem was obtained in \cite{LLR2017}.

\begin{theorem} [Theorem 5.5, \cite{LLR2017}] Let $N = p^2q$ where $p,q$ are prime numbers and let $\D$ be an integer subset of $\#\D = N$ with g.c.d $(\D) = 1$. Then $\D$ is a tile digit set of $N$ (i.e. the attractor $K (N,\D)$ is a self-similar tile) if and only if $\D$ is a modulo product-form of one of the following three types:
\begin{enumerate}
    \item $\E = \E_p\oplus (p\E_q)\oplus (pq\E_p)$;
    \item $\E = \E_p\oplus p^{2\ell}\E_q\oplus (p^{2\ell-1}\E_p)$, for some $\ell\geq 1$;
    \item $\E = \E_q\oplus (q\E_p) \oplus (pq\E_p)$,
\end{enumerate}
and ${\mathfrak T}$ are the set of all cyclotomic polynomials dividing $P_{\E}(x)$ in each case.
\end{theorem}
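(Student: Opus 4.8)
The plan is to convert the geometric tiling statement into an algebraic condition on the mask polynomial $P_{\D}(x)=\sum_{d\in\D}x^{d}$ and then read off the admissible structures from the cyclotomic divisors of $P_{\D}$. Since $\#\D=N$, the attractor $K(N,\D)$ always has Lebesgue measure at most $1$, and it is a self-similar tile precisely when $|K(N,\D)|=1$; by the transfer-operator/tree description of the measure of the attractor this happens exactly when $\D$ tiles the cyclic group $\Z_{N^{k}}$ at every scale $N^{k}$ in a compatible fashion. The first step is to record this equivalence in terms of roots of unity: phrasing the tiling as vanishing of $\widehat{\mu}_{N,\D}$ at the nonzero integers, one sees that $K(N,\D)$ is a tile if and only if for each $k$ the collection $\{\Phi_{d}:d\mid N^{k},\ d>1,\ \Phi_{d}\mid P_{\D}\}$ is rich enough that $\prod\Phi_{d}$ accounts for the full factor $(x^{N^{k}}-1)/(x-1)$ forcing $\D\oplus T_{k}\equiv\Z_{N^{k}}$. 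The whole problem thus reduces to deciding which collections of $\Phi_{p^{a}q^{b}}$ can divide the mask of a tile.

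Next I would exploit that $N=p^{2}q$ has only two prime factors and that the exponent of $p$ is merely $2$, which makes the combinatorics finite. After the normalization $\gcd(\D)=1$ and $0\in\D$, a Coven--Meyerowitz (T1)-type counting applies: evaluating the forced cyclotomic factors at $x=1$ against $P_{\D}(1)=N=p^{2}q$, and using $\Phi_{p^{a}}(1)=p$, $\Phi_{q^{b}}(1)=q$, $\Phi_{p^{a}q^{b}}(1)=1$, forces the prime-power factors to contribute exactly the three primes $p,p,q$. The remaining task is to determine the order in which these three ``prime slots'' (two copies of $p$ and one of $q$) are filled as one passes to the successive scales $N^{\ell_{1}},N^{\ell_{1}+\ell_{2}},\dots$. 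This is precisely the data $(\mathfrak{S}_{j},m_{j})$ and the kernel polynomials $K^{(j)}$ of (\ref{kernel_polynomial})--(\ref{kernel_nj}); Lemma \ref{lemn-j} guarantees $K^{(k)}(x)\mid P_{\D}(x)$, so the cyclotomic data of any tile can be repackaged as a modulo product-form via Proposition \ref{th-mpf}.

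For the ``if'' direction I would check directly that each of the three templates for $\E$ tiles: in each case $\E$ is a genuine direct sum whose factor polynomial $P_{\E}$ carries all the prime-power cyclotomic factors of $x^{N}-1$, so the associated modulo product-form satisfies $K^{(k)}\mid P_{\D}$ at every scale and hence is a tile digit set (this is the classical product-form tiling of \cite{LW1996}). The substance lies in the ``only if'' direction, where one must show the three templates are exhaustive. Here one tracks how the single $q$-slot sits relative to the two $p$-slots: when the $q$-contribution is inserted between the two $p$-contributions one is led, after reorganizing $\E$ into a higher-order modulo product-form, to type (i) or (iii) according to which $p$-slot comes first, whereas the genuinely new possibility --- the $q$-slot appearing at a scale $p^{2\ell}$ strictly deeper than the base --- produces the higher-order family (ii) with parameter $\ell$.

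The main obstacle is exactly this exhaustiveness: one must rule out every cyclotomic configuration outside the three families, and in particular prove that the only way a factor $\Phi_{q^{b}}$ can be ``delayed'' to a deep scale is the rigid progression pattern of type (ii). This is where the passage from $N=pq$ (treated in \cite{LLR2013}) to $N=p^{2}q$ genuinely bites, since the extra power of $p$ is what creates room for the higher-order product-forms; controlling them requires iterating the modulo-reduction and re-decomposition of $\E$ (the $m$-th order product-form construction) until the process stabilizes, and then verifying that no further branch survives the measure-one/tiling constraint.
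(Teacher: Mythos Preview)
The paper does not prove this theorem; it is quoted verbatim from \cite{LLR2017} (as the attribution ``[Theorem 5.5, \cite{LLR2017}]'' indicates) and used as a black box to feed into Proposition~\ref{high-to-one} and the proof of Theorem~\ref{theorem_FHL}. There is therefore no ``paper's own proof'' to compare your proposal against.

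That said, your sketch is in the spirit of the argument in \cite{LLR2013,LLR2017}: reduce to cyclotomic divisibility of $P_{\D}$, use the (T1) count to force exactly three prime-power factors $p,p,q$, and then classify how these are distributed across scales. But what you have written is an outline, not a proof. The hard part --- and you say so yourself --- is the exhaustiveness of the three templates, and your proposal does not actually carry this out: phrases like ``one tracks how the single $q$-slot sits relative to the two $p$-slots'' and ``iterating the modulo-reduction \dots until the process stabilizes'' are descriptions of what needs to be done, not arguments. In particular, the delicate point in \cite{LLR2017} is showing that the higher-order modulo product-form process terminates in finitely many steps and that every surviving branch lands in one of the three families; this requires a careful induction on the cyclotomic tree structure, not just the (T1) count. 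If you want a self-contained proof you will need to reproduce that induction.
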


\medskip

More generally, the $p^{\alpha}q$-tile digit sets are also completely classified. 

\begin{theorem} [Theorem 5.8, \cite{LLR2017}] \label{theorem5.8LLR} Let $N = p^{\alpha}q$ where $p,q$ are prime numbers and let $\D$ be an integer subset of $\#\D = N$ with g.c.d $(\D) = 1$. Then $\D$ is a tile digit set of $N$ (i.e. the attractor $K (N,\D)$ is a self-similar tile) if and only if $\D$ is a modulo product-form of one of the following three types:
\begin{enumerate}
    \item $\E = \E_p\oplus (p\E_q)\oplus  \bigoplus_{j=1}^{\alpha-1}(p^{j}q\E_p)$,
    
    \medskip
    
    \item $\E = \E_p\oplus (p^{\alpha(M+1)+k}\E_q)\oplus\bigoplus_{j=1}^{\alpha-1} p^{\alpha M_j+j}\E_p$, where $M_j\geq 0$, $\forall j\ge 2$, $M =\max\{M_j : 1\le j\le \alpha-1\}$ and $k = \max\{j : M_j = M\}$.

\medskip

    \item $\E = \E_q\oplus \bigoplus_{j=0}^{\alpha-1} (p^{j}q\E_p)$
\end{enumerate}
and ${\mathfrak T}$ are the set of all cyclotomic polynomials dividing $P_{\E}(x)$ in each case.
\end{theorem}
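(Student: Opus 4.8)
The plan is to prove both directions through the mask polynomial $P_{\D}(x)=\sum_{d\in\D}x^d$, its factorization into cyclotomic polynomials, and the Coven--Meyerowitz conditions $(T1)$, $(T2)$ recalled in Section~\ref{section-mod}. The starting point is the known characterization that, since $\#\D=p^{\alpha}q=N$, the attractor $K(N,\D)$ is a tile exactly when it has full Lebesgue measure, and that in the Fourier picture this self-similar tiling is governed by a precise, scale-by-scale prescription of which cyclotomic factors $\Phi_d$ with $d\mid N^k$ must divide $P_{\D}(x)$. For $N=p^{\alpha}q$ the only relevant divisors have the form $p^iq^j$, so the entire problem reduces to understanding how the factors $\Phi_{p^i}$, $\Phi_{q}$, and the mixed factors $\Phi_{p^iq}$ are distributed across the scales $N,N^2,\dots$, as tracked by the kernel polynomials $K^{(j)}$ and the l.c.m.\ quantities $n_j$ of \eqref{kernel_nj}.

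The first key step is necessity of the prime-power skeleton. Assuming $\D$ tiles, I would apply $(T1)$: each cyclotomic factor $\Phi_{p^a}$ contributes the prime $p$ to $P_{\D}(1)$, each $\Phi_{q^b}$ contributes $q$, and the mixed factors contribute $1$; since $P_{\D}(1)=\#\D=p^{\alpha}q$, the set $S_{\D}$ of prime-power cyclotomic factors must consist of exactly $\alpha$ powers of $p$ together with a single power of $q$. Pushing these factors through the self-similar scaling $x\mapsto x^{N^{\ell}}$ via the cyclotomic identity \eqref{prop_cyclo} then determines, level by level, which summand $\E_j$ may carry a $p$-block and which may carry the unique $q$-block. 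This is exactly where the three combinatorial templates arise, according to whether the $q$-block leads, trails, or is interleaved among the $p$-blocks.

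The second key step is assembling the product-form and proving sufficiency. For the necessity half I would run the reduction already set up in this section: Lemma~\ref{lemn-j} gives $n_j=m_jN^{\ell_1+\cdots+\ell_j}$ and $K^{(k)}(x)\mid P_{\D}(x)$, and Proposition~\ref{th-mpf} converts a modulo product-form into a genuine $k$-stage product-form with $\E_j(d)\equiv\E_j\pmod{m_j}$. Once the skeleton from step one is fixed, I would peel off one scale at a time, dividing out the corresponding $K^{(j)}$ and using a degree/counting argument to show the quotient is again a modulo product-form of the same shape but with $\alpha$ decreased, closing an induction on $\alpha$. For sufficiency I would reverse this: take each of the three normal forms for $\E$, verify $(T1)$ and $(T2)$ directly (using that $\E_p$ and $\E_q$ are complete residue systems and hence tile their cyclic groups), and then invoke the equivalence that such a modulo product-form is a tile digit set.

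The hard part will be step one: the interplay between the many $p$-scales and the single $q$-scale is genuinely delicate because a given $\D$ can be rewritten as a nested product-form in several inequivalent ways (the higher-order phenomenon), and one must show that the \emph{maximal} factorization lands in exactly one of the three normal forms with no spurious overlap. Controlling this demands careful bookkeeping of the l.c.m.\ computation \eqref{kernel_nj}, so that no cyclotomic factor is double-counted and the exponents $M_j$ and $k$ in type (ii) are \emph{forced} rather than merely permitted. This is precisely the point that exploits the restriction to a single power of $q$, and it is why the argument would break down for a product of two distinct odd primes to a positive power.
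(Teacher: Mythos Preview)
This theorem is not proved in the present paper at all: it is quoted verbatim as Theorem~5.8 of \cite{LLR2017} and used as an input to the arguments of Section~\ref{section-mod}. The paper's contribution is to take this classification as given and then show (via Proposition~\ref{high-to-one} and the product-form Hadamard machinery of Sections~4--6) that each of the three types yields a spectral self-similar tile. So there is no ``paper's own proof'' to compare against, and you are attempting to reprove a result that the authors deliberately import from elsewhere.

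As for the proposal itself, it is a plausible high-level outline of how the argument in \cite{LLR2017} is organized, but it is far from a proof. The genuinely hard direction is necessity: showing that \emph{every} tile digit set of $p^{\alpha}q$ must reduce to one of the three normal forms. Your step one correctly identifies that $(T1)$ pins down the prime-power skeleton $S_{\D}$, but going from ``$\alpha$ $p$-power factors and one $q$-power factor'' to the precise placement of the summands $\E_j$ across scales requires the detailed tree/splitting analysis of \cite{LLR2013,LLR2017}, not just bookkeeping with \eqref{kernel_nj}. In particular, your proposed induction on $\alpha$ by ``peeling off one scale and dividing out $K^{(j)}$'' glosses over the central difficulty: one must first prove that the tile digit set \emph{admits} such a peeling at the bottom level, which is exactly the structure theorem one is trying to establish. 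The higher-order product-form phenomenon (type~(ii)) arises precisely because a naive one-stage peel can fail, and the actual argument in \cite{LLR2017} handles this by a separate analysis of the $\Phi$-tree of $P_{\D}$ rather than a straightforward induction.
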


Indeed, the case (i) and (iii) in both $p^2q$ and $p^{\alpha}q$, $\E = \{0,1,...,N-1\}$, so the modulo product-form are easily seen to be from a strict product-form. While for case (ii), they were understood as higher order product-forms  in \cite{LLR2017}. However, the following proposition provides a more natural perspective of case (ii) as first-order modulo product-form or an $\alpha$-stage product-form from a direct sum of $\Z_{p^{\alpha}q}$. 

\begin{Prop}\label{high-to-one}
Suppose $\D$ is a modulo product-form of $\E$ in the case (ii) of Theorem \ref{theorem5.8LLR}. Then $q^{M}\E$ is a direct product-form of $p^{\alpha}q$ and $q^{M}\D$ is the first order modulo product-form of 
$$
(q^{M}\E_p)\oplus p^{\alpha+k}\E_q\oplus \bigoplus_{j=1}^{\alpha-1} (q^{M-M_j}p^{j}\E_p)\equiv\Z_{p^{\alpha}q}.
$$
\end{Prop}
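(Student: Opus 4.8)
The plan is to prove the two assertions in turn: that the reordered blocks of $q^M\E$ form a complete residue system modulo $N=p^\alpha q$, and that the modular folding that produces $\D$ from $\E$ becomes, after multiplication by $q^M$, the first-order folding of $\E'$. I would first establish
$$
\E' := (q^M\E_p)\oplus p^{\alpha+k}\E_q\oplus \bigoplus_{j=1}^{\alpha-1}(q^{M-M_j}p^j\E_p) \equiv \Z_N \pmod N .
$$
Since $\#\E' = p\cdot q\cdot p^{\alpha-1}=N$, it suffices to show $\Phi_d(x)\mid P_{\E'}(x)$ for every $d\mid N$ with $d>1$: evaluating $P_{\E'}$ at the $N$-th roots of unity (value $N$ at $1$ and $0$ at every primitive $d$-th root) then forces $P_{\E'}(x)\equiv 1+x+\cdots+x^{N-1}\pmod{x^N-1}$, which is exactly $\E'\equiv\Z_N$ with distinct residues. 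As $P_{\E'}$ factors into the block masks $P_{\E_p}(x^{q^M})$, $P_{\E_q}(x^{p^{\alpha+k}})$ and $P_{\E_p}(x^{q^{M-M_j}p^j})$, and $P_{\E_p}(x^a)=\frac{x^{ap}-1}{x^a-1}$ is divisible by $\Phi_d$ precisely when $d\mid ap$ and $d\nmid a$ (likewise for $\E_q$), a short case analysis shows the first block supplies $d\in\{p,pq\}$, the second supplies every $d=p^aq$ with $0\le a\le\alpha$, and the $j$-th remaining block supplies $d=p^{j+1}$ always and $d=p^{j+1}q$ when $M_j<M$. Letting $a$ run over $1,\dots,\alpha$ and $b\in\{0,1\}$ exhibits every $d\mid N$, $d>1$, as covered, giving $\E'\equiv\Z_N$. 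Claim (a) is then immediate from the identities $q^Mp^{\alpha(M+1)+k}=N^M p^{\alpha+k}$ and $q^Mp^{\alpha M_j+j}=N^{M_j}q^{M-M_j}p^j$, which rewrite $q^M\E = (q^M\E_p)\oplus N^M(p^{\alpha+k}\E_q)\oplus\bigoplus_j N^{M_j}(q^{M-M_j}p^j\E_p)$ as a direct product-form of $N$ whose base blocks are exactly those of $\E'$.

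For the second assertion I would use that multiplication by $q^M$ commutes with reduction in the form $q^M(a\bmod n)=(q^M a)\bmod(q^M n)$. By Theorem \ref{theorem5.8LLR}, $\D$ is generated from $\E$ by the modular recursion $\D^{(j)}=\D^{(j-1)}+N^{\ell_1+\cdots+\ell_j}\E_j\pmod{n_j}$, so scaling the whole recursion by $q^M$ shows $q^M\D$ is generated from $q^M\E$ by the same recursion with each modulus $n_j$ replaced by $q^Mn_j$ and each scaled summand replaced by its $q^M$-multiple, which by the rewriting above is precisely a block of $\E'$ scaled by a power of $N$. It then remains to identify the scaled moduli $q^Mn_j$ with the moduli $n'_j$ of the first-order modulo product-form of $(\E',\mathfrak T')$, where $\mathfrak T'=\{\Phi_d:d\mid N,\,d>1\}$. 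Here I would invoke Lemma \ref{lemn-j}, which gives $n_j=m_jN^{\ell_1+\cdots+\ell_j}$ and $n'_j=m'_jN^{\ell'_1+\cdots+\ell'_j}$, so the whole matter reduces to the arithmetic identity $q^Mn_j=n'_j$; the cyclotomic relation (\ref{prop_cyclo}) is what translates the factors $\Phi_d(x^{N^{\ell}})$ entering the kernel polynomial $K^{(j)}$ of the higher-order form of $\E$ into factors $\Phi_{d'}$ with $d'\mid N$ after the $q^M$-shift.

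The main obstacle I anticipate is exactly this modulus-matching step. Proving $q^Mn_j=n'_j$ amounts to showing that multiplication by $q^M$ converts the higher-order cyclotomic data $\mathfrak T$ (which contains $\Phi_d$ with $d\nmid N$, reflecting that the case-(ii) set $\E$ is only a higher-order product-form) into the first-order data $\mathfrak T'$ supported on divisors of $N$, in such a way that the least common multiples $m_j$ and $m'_j$ align after the shift. This calls for an order-by-order comparison of the kernel polynomials $K^{(j)}$ attached to $\E$ and to $\E'$ via Proposition \ref{th-mpf} together with (\ref{prop_cyclo}), plus bookkeeping of the sorted $N$-exponents $0,M,M_1,\dots,M_{\alpha-1}$ that fix the partial sums $\ell'_1+\cdots+\ell'_j$. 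Once $q^Mn_j=n'_j$ is verified, the two recursions coincide stage by stage and $q^M\D$ is the first-order modulo product-form of $\E'$.
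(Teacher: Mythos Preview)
Your plan is sound and ultimately aligns with the paper's argument, but the two proofs of the first claim differ in style. The paper bypasses your cyclotomic case analysis entirely: it simply notes, since $\gcd(p,q)=1$, that
\[
q^{M}\E_p\equiv \E_p \ (\mathrm{mod}\ p),\qquad p^{\alpha+k}\E_q\equiv p^{\alpha}\E_q \ (\mathrm{mod}\ q),\qquad q^{M-M_j}p^{j}\E_p\equiv p^{j}\E_p \ (\mathrm{mod}\ p^{j+1}),
\]
and from these congruences the direct sum is seen to be a complete residue system modulo $N=p^{\alpha}q$. Your approach via showing $\Phi_d\mid P_{\E'}$ for every $d\mid N$, $d>1$ is correct (the divisibility analysis you sketch checks out), but it is longer than necessary; the paper's three congruences do the same job in one line.

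For the second claim your approach coincides with the paper's: both scale the modulo product-form recursion by $q^{M}$ and rewrite each scaled summand as an $\E'$-block times a power of $N$. You are right that the remaining content is the identification of moduli, i.e.\ that the scaled moduli $q^{M}n_j$ agree with the first-order moduli $n'_j$ computed from $(\E',\mathfrak T')$ with $\mathfrak T'=\{\Phi_d:d\mid N,\,d>1\}$. The paper handles this step tersely, simply declaring that in the scaled recursion ``$n_j$ are defined in the same way in (\ref{kernel_nj})''. Your instinct to justify this via Lemma~\ref{lemn-j}, which gives $n_j=m_jN^{\ell_1+\cdots+\ell_j}$, together with the cyclotomic identity (\ref{prop_cyclo}), is exactly the right tool; the bookkeeping of the shifted $N$-exponents $\ell_j\mapsto \ell_j+M_{\sigma(j)}$ is routine once you track how each factor $q^{M}$ splits as $q^{M-M_j}\cdot q^{M_j}$ and absorb $q^{M_j}$ into $N^{M_j}$. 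So the ``obstacle'' you anticipate is real but not deep: it is precisely the step the paper suppresses, and your outlined verification through Lemma~\ref{lemn-j} is what fills it.
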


\begin{proof}
We multiple $q^{M}$ to $\E$ in (ii) of Theorem \ref{theorem5.8LLR}. We have 
$$
q^{M}\E = (q^{M}\E_p)\oplus N^{M}(p^{\alpha+k}\E_q)\oplus\bigoplus_{j=1}^{\alpha-1} N^{M_j} \left(q^{M-M_j}p^{j}\E_p\right)
$$
As $p,q$ are relatively prime, 
\begin{equation}\label{eqmod}
q^{M}\E_p\equiv \E_p  \ (\mbox{mod}  \ p), \  \ p^{\alpha+k}\E_q\equiv  p^{\alpha}\E_q \  (\mbox{mod} \ q) \ \ \mbox{and} \ \  q^{M-M_j}p^{j}\E_p\equiv p^{j}\E_p  \ (\mbox{mod} \  p^{j+1}).
\end{equation}  Therefore, $(q^{M}\E_p)\oplus p^{\alpha+k}\E_q\oplus \bigoplus_{j=1}^{\alpha-1} (q^{M-M_j}p^{j}\E_p)\equiv \Z_{p^{\alpha}q}$. This shows that $q^{M}\E$ is a first order direct product-form.

\medskip

Multiplying $q^{M+1}$ to the modulo product-form generating process of $\D$, $q^{M+1}\D$ is generated as
$$
\left\{\begin{array}{lll}
\D^{(0)}&=&q^{M}\E_p\\
\ 
\D^{(1)}& = &\D^{(0)} + N^{\ell_1+M} (p^{\alpha+k}\E_q) \ (\mbox{mod}  \ n_1)\\
\D^{(2)}&=&\D^{(1)} + N^{\ell_1+\ell_2+ M_2} (q^{M-M_2}p\E_q) \ (\mbox{mod}  \ n_2)\\
&\vdots&\\
\D^{(\alpha)}& = &\D^{(\alpha-1)} + N^{\ell_1+...+\ell_k+M_{\alpha} }(q^{M-M_{\alpha}}p^{\alpha-1}\E_q) \ (\mbox{mod}  \ n_{\alpha})
\end{array}\right.
$$
where $n_j$ are defined in the same way in (\ref{kernel_nj}). This shows that $q^{M+1}\D$ is a first order modulo product-form given in the statement. We have completed the proof. 
\end{proof}

The proposition shows that higher order product-form is not necessarily required to describe all tile digit sets of $p^{\alpha}q$ as long as we do not require the g.c.d. of $\D$ to be equal to 1. However, it is hard to predict if there could be a reduction for  higher order product-forms if $N$ contains many primes.  We are now ready to prove Theorem \ref{theorem_FHL}.

\medskip

\noindent{\it Proof of Theorem \ref{theorem_FHL}.} Let $\D$ be a tile digit set of $N= p^{\alpha}q$. Our goal is to show that $\mu_{N,\D}$ is a spectral measure. This is sufficient to show that $\D$ forms a $\alpha$-stage Hadamard triple. Using Proposition \ref{th-mpf} and Proposition \ref{high-to-one}, we know  that all cases (i)-(iii) are $\alpha$-stage product-form. We now construct ${\mathcal L}_0,{\mathcal L}_1...,{\mathcal L}_{\alpha}$ so that it forms a product-form Hadamard triple.

\medskip

\noindent (i) In case (i) of Theorem \ref{theorem5.8LLR}, we let $\widetilde{\E}_0 = \E_p$, $\widetilde{\E}_1 = p\E_q$ and $\widetilde{\E}_{j} = p^{j-1}q\E_p$. Then $(N, \widetilde{\E}_0, p^{\alpha-1}q\E_p)$, $(N, \widetilde{\E}_1, p^{\alpha-1}\E_q)$ and $(N, \widetilde{\E}_j, p^{\alpha-j}\E_p)$ are Hadamard triples. Define  
$$
{\mathcal L}_0 = p^{\alpha-1}q\E_p, \ {\mathcal L}_1 =p^{\alpha-1}\E_q, \ ..., \  {\mathcal L}_j = p^{\alpha-j}\E_p.
$$
It is routine to check that $(N, \widetilde{\E}_0\oplus...\oplus \widetilde{\E}_{m}, {\mathcal L}_0\oplus...\oplus{\mathcal L}_{m})$ and $(N, \widetilde{\E}_{m}\oplus...\oplus \widetilde{\E}_{\alpha}, {\mathcal L}_m\oplus...\oplus{\mathcal L}_{\alpha})$ are Hadamard triples as well. Therefore, it forms a Hadamard triple. 

\medskip

\noindent (ii) In case (ii) of Theorem \ref{theorem5.8LLR}, we will show that $q^{M}\D$ is generated by a product-form Hadamard triple via 
$$
(q^{M}\E_p)\oplus N^{M}(p^{\alpha+k}\E_q)\oplus\bigoplus_{j=1}^{\alpha-1} N^{M_j} \left(q^{M-(M_j)}p^{j}\E_p\right) : = \widetilde{\E}_0\oplus\widetilde{\E}_1\oplus...\oplus \widetilde{\E}_{\alpha}.
$$
 Define  
$$
{\mathcal L}_0 = p^{\alpha-1}q\E_p, \ {\mathcal L}_1 =\E_q, \ ..., \  {\mathcal L}_j = p^{\alpha-j}q\E_p.
$$
 (\ref{eqmod}) meaning that we have the same kind of Hadamard triple structure. Hence,  $(N, \widetilde{\E}_j,{\mathcal L}_j)$ are Hadamard triples for all $j = 0,1,...,\alpha$, so are its products. 

\medskip
\noindent (iii) Case (iii) of Theorem \ref{theorem5.8LLR} is based on the same construction of (i), so we omit the detail. \qquad$\Box$


\medskip
\section{Tiling, four-digit self-similar measures and some examples}\label{examples-section}

In this section, we will demonstrate  how one can generate product-form Hadamard triples. We will first prove Theorem \ref{theorem_four_digit} which provides many basic examples of product-form digit sets. Then we demonstrate one can generate product-form Hadamard triples by tilings. Finally,  we will provide several examples that are new under current settings. 

\medskip

\subsection{Four-digit self-similar measures.} We now offer a proof for our four-digit cases.

\medskip

\noindent{\it Proof of Theorem \ref{theorem_four_digit}.} Let $N = 2^{\beta}m $ where $\beta\ge 1$ and $m$ is odd. Recall also that the digit set $\D = \{0,a,2^t\ell,a+2^t\ell'\}$ where $a,\ell,\ell'$ are odd numbers and $t$ is not divisible by $\beta$. Hence, we can write $t = \beta k +r$ where $k\ge 0$ and $r \in\{1,...,\beta-1\}$. Now, we can write 
$$
\D = (\{0\}+ 2^{\beta k} \{0,2^r\ell\})\cup (\{a\}+ 2^{\beta k} \{0,2^r\ell'\})
$$
Multiplying $m^k$ on both sides, we obtain
$$
m^k\D = (\{0\}+ N^k \{0,2^r\ell\})\cup (\{a m^k\}+ N^k\{0,2^r\ell'\}).
$$
Note that $\mu_{N,\D}$ is spectral if and only if  $ \mu_{N,m^k\D}$ is spectral. We now claim that $m^k\D$ is a one-stage product-form  of $N$ generated by some Hadamard triples. 

\medskip

Let $\A = \{0,am^k\}$, $\B_0 = \{0,2^r\ell\}$ and $\B_a = \{0,
2^r\ell'\}$. Define also ${\mathcal L}_1 = \{0, \frac{N}{2}\}$ and ${\mathcal L}_{2} = \{0, N2^{r-1}\}$ We can check immediately that $(N,\A,{\mathcal L}_1)$ and $(N,\B_i,{\mathcal L}_2)$ are Hadamard triples for all $i = 0$ or $a$.   Moreover, $(N, \A\oplus \B_i, {\mathcal L}_1\oplus{\mathcal L}_2)$ also forms a Hadamard triple (note that this is false if $r = 0$). 
The conclusion now follows from Theorem \ref{theorem_main1}.
\qquad$\Box$

\subsection{Coven-Meyerowitz tiling theory.} Let us recall the theory by Coven-Meyerowitz and {\L}aba concerning tiling on cyclic groups. Let $\Phi_d(x)$ be the $d^{\rm th}$ cyclotomic polynomial, which is the minimal polynomial of $e^{2\pi i /d}$ in $\Z[x]$.   For a finite set of integers $\A$,  $P_{\A}(z) = \sum_{a\in\A}z^a$.  



\medskip
 We define 
${\mathcal S}_{\A}$ to be the set of all prime-powers $s=p^{\alpha}$ such that  $s|N$  and  $\Phi_{s}(x)| P_{\A}(x)$. Coven and Meyerowitz introduced the following two conditions for $P_{\A}$ which guarantees tilings for $\A$ for some $\Z_N$.  

\medskip
$(T1)$: $P_{\A}(1) = \prod_{s\in {\mathcal S}_{\A}} \Phi_{s} (1)$.

\medskip

$(T2)$: \mbox{For all distinct prime powers $s_1,...,s_k\in{\mathcal S}_A$, we have  $\Phi_{s_1...s_k}(x)| P_{\A}(x)$}

Coven-Meyerowitz and {\L}aba proved the following theorem.

\begin{theorem} Let $\A$ (mod $N$) be a subset of $\Z_N$. Then 
\medskip

1. [Coven-Meyerowitz] Suppose that $\A$ satisfies $(T1)$ and $(T2)$. Then $\A$ tiles $\Z_N$. Conversely, if $\A$ is tile the cyclic group $\Z_N$, then $(T1)$ holds. Moreover, if $\#\A$ contains at most two prime factors, then $(T2)$ holds. 

\medskip

2. [{\L}aba] Suppose that $\A$ satisfies $(T1)$ and $(T2)$. Then $\A$  a spectral set in $\Z_N$. $\A$ admits a spectrum 
$$
{\mathcal L}_{\A} = \left\{\sum_{s \in\S_{\A}} {\epsilon_s}\frac{N}{s}: \epsilon_s\in \{0,1,...,p-1\}  \ \mbox{if} \  s=p^{\alpha}  \right\}.
$$
\end{theorem}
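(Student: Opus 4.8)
The plan is to translate everything into mask polynomials and exploit the factorization $x^N-1=\prod_{d\mid N}\Phi_d(x)$, so that $1+x+\cdots+x^{N-1}=\prod_{d\mid N,\,d>1}\Phi_d(x)$. With $P_{\A}(x)=\sum_{a\in\A}x^a$, the assertion ``$\A$ tiles $\Z_N$'' is equivalent to the existence of $\B$ with $P_{\A}(x)P_{\B}(x)\equiv 1+x+\cdots+x^{N-1}\pmod{x^N-1}$, while ``$\A$ is spectral in $\Z_N$ with spectrum $\mathcal{L}$'' is, by the Hadamard-triple reformulation in Definition \ref{def_hada}, equivalent to $\#\mathcal{L}=\#\A$ together with $P_{\A}(\zeta)=0$ for every $\zeta=e((\ell-\ell')/N)$ with $\ell\neq\ell'$ in $\mathcal{L}$.

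For Łaba's half I would verify that $\mathcal{L}_{\A}$ is a spectrum directly, and this is the part I would carry out in full, since a single computation settles both the count and the orthogonality. Given two tuples $(\epsilon_s)$, $(\epsilon'_s)$ defining $\ell,\ell'\in\mathcal{L}_{\A}$, write
\[
\frac{\ell-\ell'}{N}=\sum_{s\in{\mathcal S}_{\A}}\frac{\epsilon_s-\epsilon'_s}{s}.
\]
Grouping the $p$-power terms for each prime $p$ and letting $p^{\beta_p}$ be the largest power with $\epsilon_{p^{\beta_p}}\neq\epsilon'_{p^{\beta_p}}$, the numerator over $p^{\beta_p}$ is coprime to $p$ (its residue mod $p$ is the nonzero difference $\epsilon_{p^{\beta_p}}-\epsilon'_{p^{\beta_p}}$), so the $p$-part of the denominator is exactly $p^{\beta_p}$; by the Chinese Remainder Theorem the fraction is in lowest terms with denominator $d=\prod_p p^{\beta_p}$. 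If the tuples differ then $d>1$, so $(\ell-\ell')/N\notin\Z$, showing the elements of $\mathcal{L}_{\A}$ are pairwise distinct modulo $N$ and hence $\#\mathcal{L}_{\A}=\prod_{p^\alpha\in{\mathcal S}_{\A}}p$; by $(T1)$ and $\Phi_{p^\alpha}(1)=p$ this equals $\#\A$. For orthogonality, $\zeta=e((\ell-\ell')/N)$ is a primitive $d$-th root of unity and $d$ is a product of distinct prime powers all lying in ${\mathcal S}_{\A}$, so $(T2)$ yields $\Phi_d\mid P_{\A}$ and therefore $P_{\A}(\zeta)=0$. Thus $(N,\A,\mathcal{L}_{\A})$ is a Hadamard triple and $\A$ is spectral.

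For the easy Coven–Meyerowitz implications I would argue by cyclotomic valuations. Writing $P_{\A}=\big(\prod_{j:\Phi_{p^j}\mid P_{\A}}\Phi_{p^j}\big)R$ with $R\in\Z[x]$ gives $v_p(\#\A)=v_p(P_{\A}(1))\ge\#\{j:\Phi_{p^j}\mid P_{\A}\}$, and similarly for $\B$. From the tiling congruence each $\Phi_{p^j}$ with $p^j\mid N$ divides the product $P_{\A}P_{\B}$ and, being irreducible, divides at least one factor; summing over $1\le j\le v_p(N)$ and using $v_p(\#\A)+v_p(\#\B)=v_p(N)$ forces all these inequalities to be equalities, which is precisely $(T1)$. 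The implication ``tiling with $\#\A$ having at most two prime factors $\Rightarrow(T2)$'' is the genuinely delicate elementary case analysis of \cite{CM1999}, which I would cite rather than reproduce, as it is peripheral to the spectral applications of this paper.

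The main obstacle is the one constructive implication $(T1)\wedge(T2)\Rightarrow$ tiling. Here one must build $\B$ so that $P_{\A}P_{\B}$ realizes each factor $\Phi_d$, $d\mid N$, $d>1$, exactly once modulo $x^N-1$: the divisors $d$ that are products of distinct prime powers from ${\mathcal S}_{\A}$ are already furnished by $P_{\A}$ thanks to $(T2)$, and $\B$ must supply precisely the complementary divisors, with the prime-by-prime multiplicity bookkeeping governed by $(T1)$. Matching these multiplicities simultaneously across all primes is the combinatorial heart of Coven–Meyerowitz and the step least amenable to a short reduction; for this survey I would exhibit the construction in the prime-power and two-prime regimes that actually arise for $N=p^{\alpha}q$ and defer the general statement to \cite{CM1999}.
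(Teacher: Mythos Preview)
The paper does not prove this theorem at all: it is stated with explicit attribution (``Coven--Meyerowitz and {\L}aba proved the following theorem'') and no argument is given, since it is quoted from \cite{CM1999} and \cite{L2001} purely as background for the applications in Section~\ref{examples-section}. So there is no ``paper's own proof'' to compare against.

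That said, your sketches are correct and faithful to the original sources. Your argument for {\L}aba's half is exactly the proof in \cite{L2001}: reduce the fraction $(\ell-\ell')/N$ by isolating, for each prime, the highest power at which the tuples differ, read off the exact denominator $d$ as a product of distinct prime powers from ${\mathcal S}_{\A}$, and invoke $(T2)$ to get $\Phi_d\mid P_{\A}$. Your $p$-adic valuation count for $(T1)$ is likewise the standard argument from \cite{CM1999}, and you are right to defer the two harder implications (tiling with two prime factors $\Rightarrow (T2)$, and $(T1)\wedge(T2)\Rightarrow$ tiling) to that reference, since those are exactly the parts requiring nontrivial case analysis or the explicit construction of the tiling complement. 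Nothing is missing or wrong; you have simply supplied what the paper deliberately omits.
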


\medskip

A major open problem is to determine if all tiles in a cyclic group $\Z_N$ satisfy $(T1)$ and $(T2)$. If that is the case, we can also settle the ``tiling implies spectrality" direction of the Fuglede's conjecture on $\R^1$. We say that a cyclic group $\Z_N$ is {\it CM-regular} if for all $\A\subset \Z_N$ such that it tiles $\Z_N$, $\A$ satisfies condition $(T1)$ and $(T2)$. For all finite groups $\Z_N$ for which Fuglede's conjecture has been verified so far, $\Z_N$ are all CM-regular. Now, the following corollary is immediate.

\begin{corollary}\label{corollary_CM}
    Let $\Z_N$ be a CM-regular cyclic group and suppose that 
    $$
    \Z_N \equiv \A_1\oplus\A_2\oplus...\oplus\A_k \ (\mbox{mod} \ N)
    $$
    Then $(N,\D,{\mathcal L}_{\A_1}\oplus...\oplus {\mathcal L}_{\A_k})$ forms a $k$-stage product-form Hadamard triple if we define $\D$ as in (\ref{def-prod-form-Had}) and  $\mu_{N,\D}$ is a spectral measure. 
\end{corollary}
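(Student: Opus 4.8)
The plan is to extract all the Hadamard triples demanded by Definition \ref{def-prod-form-Had} straight from the Coven--Meyerowitz--{\L}aba theory, and then invoke Theorem \ref{theorem_main2}. First I would note that, because $\Z_N\equiv\A_1\oplus\cdots\oplus\A_k\ (\mbox{mod}\ N)$, every sub-collection of the factors is again a direct summand of $\Z_N$: for a block $\A_i\oplus\cdots\oplus\A_j$ the complementary set is the direct sum of the remaining factors, so the block tiles $\Z_N$. Since $\Z_N$ is CM-regular, each such tile satisfies $(T1)$ and $(T2)$, and hence by {\L}aba's theorem is a spectral set in $\Z_N$ with the explicit spectrum ${\mathcal L}_{(\cdot)}$. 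In particular $(N,\A_i,{\mathcal L}_{\A_i})$ is a Hadamard triple for every $i$. Setting $\E_0=\A_1$ and $\E_j(d)=\A_{j+1}$ (constant in $d$, so that $\D$ is in fact a direct product-form), this already yields condition (i) of Definition \ref{def-prod-form-Had}.

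The real content is condition (ii), for which I would establish the multiplicativity of the {\L}aba spectrum under direct sums: if $\mathcal{B}\oplus\mathcal{C}$ is a direct sum in $\Z_N$ and both summands satisfy $(T1),(T2)$, then
$${\mathcal S}_{\mathcal{B}\oplus\mathcal{C}}={\mathcal S}_{\mathcal{B}}\sqcup{\mathcal S}_{\mathcal{C}}\quad\text{and}\quad {\mathcal L}_{\mathcal{B}\oplus\mathcal{C}}={\mathcal L}_{\mathcal{B}}\oplus{\mathcal L}_{\mathcal{C}}.$$
Since $P_{\mathcal{B}\oplus\mathcal{C}}(x)\equiv P_{\mathcal{B}}(x)P_{\mathcal{C}}(x)\pmod{x^N-1}$ and each $\Phi_s$ with $s\mid N$ is irreducible and divides $x^N-1$, one gets ${\mathcal S}_{\mathcal{B}\oplus\mathcal{C}}={\mathcal S}_{\mathcal{B}}\cup{\mathcal S}_{\mathcal{C}}$; comparing the $(T1)$ identity for the three sets against $|\mathcal{B}\oplus\mathcal{C}|=|\mathcal{B}|\,|\mathcal{C}|$ then forces the union to be disjoint, because $\Phi_s(1)>1$ for every prime power $s$. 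As the {\L}aba spectrum is by definition indexed by the set ${\mathcal S}_{(\cdot)}$, this disjointness gives ${\mathcal L}_{\mathcal{B}\oplus\mathcal{C}}={\mathcal L}_{\mathcal{B}}+{\mathcal L}_{\mathcal{C}}$, and the count $|{\mathcal L}_{\mathcal{B}\oplus\mathcal{C}}|=|\mathcal{B}\oplus\mathcal{C}|=|\mathcal{B}|\,|\mathcal{C}|$ upgrades the sum to a direct sum. Applying this to the prefix $\A_1\oplus\cdots\oplus\A_m$ and the suffix $\A_m\oplus\cdots\oplus\A_k$ (each a tiling block, hence spectral) shows their spectra equal ${\mathcal L}_{\A_1}\oplus\cdots\oplus{\mathcal L}_{\A_m}$ and ${\mathcal L}_{\A_m}\oplus\cdots\oplus{\mathcal L}_{\A_k}$ respectively, which is precisely condition (ii).

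With (i) and (ii) in hand, $(N,\D,{\mathcal L}_{\A_1}\oplus\cdots\oplus{\mathcal L}_{\A_k})$ is a product-form Hadamard triple, and Theorem \ref{theorem_main2} immediately gives that $\mu_{N,\D}$ is a spectral measure. I expect the main obstacle to be the spectrum-multiplicativity identity ${\mathcal L}_{\mathcal{B}\oplus\mathcal{C}}={\mathcal L}_{\mathcal{B}}\oplus{\mathcal L}_{\mathcal{C}}$: that the blocks tile (and are therefore spectral) is automatic from the factorization, but one must check that {\L}aba's \emph{specific} choice of spectrum is compatible with direct sums, and this is exactly where the disjointness ${\mathcal S}_{\mathcal{B}}\cap{\mathcal S}_{\mathcal{C}}=\emptyset$ and the cardinality bookkeeping carry the argument. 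The remaining steps are routine translations among ``Hadamard triple,'' ``spectral set in $\Z_N$,'' and the conditions $(T1),(T2)$.
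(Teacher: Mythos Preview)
Your proof is correct and is exactly the argument the paper has in mind. The paper gives no proof beyond declaring the corollary ``immediate'' after stating the Coven--Meyerowitz/{\L}aba theorem; you have supplied the details that justify that word, namely that every sub-block of the factorization tiles $\Z_N$, hence by CM-regularity satisfies $(T1),(T2)$, and that the disjointness ${\mathcal S}_{\mathcal B}\cap{\mathcal S}_{\mathcal C}=\emptyset$ (forced by comparing cardinalities in $(T1)$) makes {\L}aba's explicit spectrum multiplicative, ${\mathcal L}_{\mathcal B\oplus\mathcal C}={\mathcal L}_{\mathcal B}\oplus{\mathcal L}_{\mathcal C}$, so that conditions (i)--(ii) of Definition~\ref{def-prod-form-Had} hold and Theorem~\ref{theorem_main2} applies.
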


\begin{remark}
\begin{enumerate}
    \item {\rm We remark that if we are not working on $CM$-regular cyclic groups, but we assume that  $\A$, $\B$ and $\A\oplus \B$  satisfies $(T1)$ and $(T2)$ with $\A\oplus \B$ tiles $\Z_N$ for some $N\in\N$, then we can still obtain the conclusion in Corollary \ref{corollary_CM}}.
    \item {\rm In the special case that $\A\oplus \B = \Z_N$, the self-similar measure becomes the Lebesgue measure supported on the self-similar tiles, so we can conclude that they are spectral sets.}   
\end{enumerate}
\end{remark}

\medskip

\begin{example}\label{example24}
Let $N = 24 = 2^3\cdot 3$. The digit set 
$$
\D = \{0,1,16,17\} = \{0,1, 2^4, 1+2^4\}
$$
By Theorem \ref{theorem_four_digit}, $\mu_{N,\D}$ is a spectral measure and 
$$
3\D = \{0,3\}\oplus 24 \{0,2\}
$$ 
is a product-form digit set with the corresponding $\D_0 = \{0,3\}\oplus \{0,2\}$ which has g.c.d equals 1. This means that, by Theorem \ref{thm_main1}, we can find a spectrum of the form $ \frac{1}{24}\{0,1\}+ \Lambda$, where $\Lambda$ is an integer subset, for $\mu_{24,3\D}$. Dividing by 3, $\mu_{24,\D}$ has a spectrum $ \frac{1}{72}\{0,1\}+ \frac13\Lambda$. 

\medskip

Note that $\D$ is not a tile of $\Z_{24}$.  To see this, we can factorize $P_{\D}$
$$
P_{\D}(x) = \Phi_2(x)\Phi_2(x^{16}) = \Phi_2(x)\Phi_{2^5}(x).
$$
As $2^5$ does not divide $24$, ${\D}$ cannot tile $\Z_{24}$ (it still tiles other cyclic groups). As Fuglede's conjecture for $\Z_{24}$ holds \cite{MK2017},  $\D$ is not a spectral set for $\Z_{24}$. Therefore, there is no ${\mathcal L}$ such that $(24,\D,{\mathcal L})$ forms a Hadamard triple in the ordinary sense. 
This example shows that even though   $\D$ is distinct residue modulo 24 and we cannot form an ordinary compatible pair using any integer set ${\mathcal L}$,  but it is still a spectral measure by our new product-form Hadamard triple consideration. 
\end{example}

\medskip

\begin{example}
This example appeared first in \cite{CM1999} and later also in \cite{FHL2015}. Let $N = 72 = 2^3\cdot 3^2$ and 
$$
\A = \{0,8,16,18,26,34\}, \ \B  = \{0,5,6,9,12,29,33,36,42,48,53,57\}.
$$
Then $\A\oplus \B\equiv \Z_{72}$. It was shown in \cite{FHL2015} that $\A\oplus\B$ cannot be reduced to strict product  (i.e $\A\oplus \B= \{0,1,...,71\}$.  Therefore, it was not known if the self-similar tile generated by the product-form $\D_r = \A\oplus (72)^r\B$ or its modulo product-form is a spectral set from \cite{FHL2015}. However, with our Theorem \ref{theorem_main1}, $\mu_{72,\D_r}$ are all spectral measures and the self-similar tiles are all spectral sets. 
\end{example}

\medskip

\section{Conclusion}

The main conclusion of this paper is to introduce product-form Hadamard triples generated by two different Hadamard triples of integer scale $N$. We showed that the self-similar measure formed by these digits are spectral measures.   Furthermore, we have also made two important observations reducing the complicated cases back to the one stage case. 
\begin{enumerate}
    \item Higher stage product-forms are also reduced to the one-stage product-form by considering some higher power of $N$. 
    \item Even if the digit sets do not look like a product-form, after multiplying some factors, the new digit set also form a product-form Hadamard triple (see Example \ref{example24}, Proof of Theorem \ref{theorem_FHL}).  
\end{enumerate}
In view of this, we would like to end this paper by stating the following conjecture which seems to be a reasonable description of all digit sets of $N$ that the self-similar measure $\mu_{N,\D}$ is a spectral measure. This conjecture will also provide the missing third condition in the {\L}aba-Wang conjecture. 

\begin{conjecture}\label{modified_LW}
Let $N\ge 2$ be a positive integer. Suppose that the self-similar measure $\mu_{N,\D}$ is a spectral measure. Then there exists $m>0$ and $r\ge 0$ such that $m\D$ forms a $r$-stage product-form Hadamard triple with respect to $N$ for some ${\mathcal L}_1\oplus...\oplus{\mathcal L}_r$.
\end{conjecture}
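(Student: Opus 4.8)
The forward implication of the classification --- that an $r$-stage product-form Hadamard triple yields a spectral measure $\mu_{N,\D}$ --- is exactly the content of Theorems \ref{theorem_main1} and \ref{theorem_main2}, so the plan is to attack the converse: to manufacture a product-form Hadamard structure out of the bare hypothesis that $\mu_{N,\D}$ is spectral. The free parameter $m$ is indispensable here and plays precisely the role it did in the proof of Theorem \ref{theorem_four_digit}: multiplying $\D$ by a suitable integer realigns the natural scales at which the digits sit into honest powers of $N$ (turning a factor such as $2^{\beta k}$ into $N^{k}=(2^{\beta}m)^{k}$), which is what allows an overlapping digit set to be displayed as a direct sum $\E_0\oplus N^{\ell_1}\E_1\oplus\cdots$. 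I would first fix such an $m$ and, after the harmless normalization $\gcd=1$ from reduction (3) of Subsection 2.1, work with the realigned digit set.

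The heart of the argument must pass from the analytic identity $\sum_{\lambda\in\Lambda}|\widehat{\mu}(\xi+\lambda)|^{2}\equiv 1$ to algebraic information about $P_{\D}(x)=\sum_{d\in\D}x^{d}$. The natural bridge is the cyclotomic factorization of $P_{\D}$: since $\widehat{\mu}(\xi)=\prod_{j\ge 1}M_{\D}(\xi/N^{j})$, any spectrum must avoid the periodized zero set of $M_{\D}$, and those zeros are governed by the cyclotomic divisors $\Phi_d(x)\mid P_{\D}(x)$. I would then sort these divisors by the scale at which they appear, grouping $\Phi_d$ according to the largest power $N^{\ell}$ that accounts for it through the substitution identity \eqref{prop_cyclo} --- this is exactly the scale-graded bookkeeping that produced the kernels $K^{(j)}$ of Section \ref{section-mod}. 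The goal is to show that spectrality forces these graded groups to assemble into successive compatible pairs, so that $m\D$ exhibits the staged direct-sum generation of Definition \ref{def-prod-form-Had}.

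In the tractable case $\#\D=4=2\cdot 2$ the bookkeeping collapses to a finite check: $P_{\D}$ has only four terms, admits few cyclotomic factorizations, and these can be enumerated directly, which is how Theorem \ref{theorem_four_digit} singles out the family $\{0,a,2^{t}\ell,a+2^{t}\ell'\}$. Here I would run the case analysis on the $2$-adic and odd parts of $N=2^{\beta}m$, using that orthogonality of two exponentials already demands $\Phi_2\mid P_{\D}$ and hence an even factor of $N$, and then verify that the only surviving configurations are the stated product-forms, with the borderline $r=0$ recovering the ordinary Hadamard triple of the original {\L}aba--Wang setting.

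The hard part is the converse in full generality, and I do not expect the scheme above to close it unconditionally. There is at present no structure theorem guaranteeing that the cyclotomic divisors of an arbitrary spectral $P_{\D}$ separate cleanly into scale-graded compatible summands; moreover, controlling the infinite tail $\mu_{>p}$ --- the very difficulty that forced the equi-positivity machinery of Sections 3--5 --- obstructs any naive induction on levels when $\#\D$ is large and composite in several ways. This is exactly why the statement is recorded as a conjecture, with the $\#\D=4$ instance, where the finiteness of the factorization tames the tail, deferred to the forthcoming \cite{AHL2022}.
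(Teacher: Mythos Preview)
The statement is recorded in the paper as Conjecture \ref{modified_LW}, not as a theorem; the paper offers no proof, only the remark that the four-digit case will be confirmed in the forthcoming \cite{AHL2022}. You recognize this explicitly in your final paragraph, so there is no discrepancy to flag: your write-up is not a proof but a sketch of a plausible attack (scale-graded cyclotomic bookkeeping, the role of the multiplier $m$, reduction to the $\#\D=4$ case), together with an honest admission that the general converse remains open. That is an accurate reading of the paper's own position.
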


If $r=0$, it means that the Hadamard triple is an ordinary Hadamard triple in Definition \ref{def_hada}. By Theorem \ref{th_k-to-1-section5}, the conclusion in the conjecture can also be  reformulated as  there exists $k$ such that 
$${\bf D}_k = (m\D)+N(m\D)+...+ N^{k-1}(m\D)$$
forms a one-stage product-form Hadamard triple with respect to $N^k$. In our next paper, we will show that the four-digit case does confirm this conjecture.

\end{document}